\documentclass[11pt]{article}

\pagestyle{plain}                                                      

\usepackage{multicol}

\setlength{\textwidth}{6.5in}     

\setlength{\oddsidemargin}{0in}  

\setlength{\evensidemargin}{0in}  

\setlength{\textheight}{8.5in}    

\setlength{\topmargin}{0in}      

\setlength{\headheight}{0in}      

\setlength{\headsep}{0in}         

\setlength{\footskip}{.5in}

\usepackage{multicol}

\usepackage{amsfonts,amsmath,amssymb,amsthm,enumerate}

\usepackage{color,tikz,calc,graphicx,fullpage}

\usepackage{ifthen,latexsym}

\usepackage[utf8]{inputenc}

\usepackage[T1]{fontenc}

\usepackage[colorlinks=true,citecolor=black,linkcolor=black,urlcolor=blue]{hyperref}

\usepackage{setspace,graphicx}

\usepackage{url,epsfig,ifpdf}

\ifpdf \DeclareGraphicsRule{*}{mps}{*}{}

\usepackage{twoopt,mathtools,stmaryrd}

\usepackage[numbers]{natbib}

\bibliographystyle{plain}

\setlength{\bibsep}{0.0pt}

\renewcommand{\Pr}{{\bf{P}}}

\newcommand{\cond}{{ \; \big\vert \; }}

\newcommand{\cA}{\mathcal{A}}

\newcommand{\cD}{\mathcal{D}}

\newcommand{\cE}{\mathcal{E}}

\newcommand{\cF}{\mathcal{F}}

\newcommand{\cG}{\mathcal{G}}

\newcommand{\cP}{\mathcal{P}}

\newcommand{\eps}{\varepsilon}

\renewcommand{\epsilon}{\varepsilon}

\newcommand{\Nat}{\mathbb{N}}

\newcommand{\Real}{\mathbb{R}}

\DeclareMathOperator*{\E}{\scalebox{1.3}{$\mathbb E$}}

\DeclareMathOperator{\supp}{supp}

\DeclareMathOperator{\ex}{ex}

\DeclareMathOperator{\Hom}{Hom}

\newcommand{\unlab}[2]{\left\llbracket #1\right\rrbracket_{#2}}

\newcommand{\wbar}{\widetilde}

\def\webpage{{http://honza.ucw.cz/proj/codeg-k4e/}}

\newcommand{\vc}[1]{\ensuremath{\vcenter{\hbox{#1}}}}

\newcommandtwoopt{\vcc}[3][1][0.7]{\vc{\includegraphics[scale=#2,page=#1]{#3}}}

\newtheorem{theorem}{Theorem}

\newtheorem{definition}[theorem]{Definition}

\newtheorem{lemma}[theorem]{Lemma}

\newtheorem{corollary}[theorem]{Corollary}

\newtheorem{conjecture}[theorem]{Conjecture}

\newtheorem{claim}[theorem]{Claim}

\newtheorem{question}[theorem]{Question}

\newtheorem{problem}[theorem]{Problem}

\newtheorem{proposition}[theorem]{Proposition}

\newtheorem{observation}[theorem]{Observation}

\newtheorem{construction}[theorem]{Construction}

\numberwithin{theorem}{section}

\newcommand{\beq}{\begin{equation}}

\newcommand{\eeq}{\end{equation}}

\numberwithin{equation}{section}

\numberwithin{theorem}{section}

\renewcommand{\qed}{\nolinebreak\mbox{\hspace{5 true pt}%

  \rule[-0.85 true pt]{3.9 true pt}{8.1 true pt}}}

\newcommand{\floor}[1]{\left\lfloor #1 \right\rfloor}

\newcommand{\ceil}[1]{\left\lceil #1 \right\rceil}

\begin{document}

\title{\bf{The codegree threshold of $K_4^{-}$}}

\author{Victor Falgas-Ravry\thanks{Research supported by Swedish Research Council grants VR 2016-03488 and VR 2021-03687.} \\
\small Institutionen f\"or matematik och matematisk statistik\\
\small Ume{\aa} Universitet, Sweden\\
\small \tt victor.falgas-ravry@umu.se\\
\and
Oleg Pikhurko\thanks{Supported by Leverhulme Research Project Grant RPG-2018-424 and ERC Advanced Grant 101020255.}\\
\small Mathematics Institute and DIMAP\\
\small University of Warwick, UK\\
\small \tt O.Pikhurko@Warwick.ac.uk \\
\and
Emil Vaughan\\
\small Citymapper, UK\\
\small \tt emil79@gmail.com
\and Jan Volec\\
\small Department of Mathematics\\
\small Czech Technical University in Prague, Czech Republic\\
\small Faculty of Nuclear Sciences and Physical Engineering\\
\small \tt jan@ucw.cz}

\date{}

\maketitle

\begin{abstract}

The codegree threshold $\ex_2(n, F)$ of a $3$-graph $F$ is the minimum $d=d(n)$ such that every $3$-graph on $n$ vertices in which every pair of vertices is contained in at least $d+1$ edges contains a copy of $F$ as a subgraph. We study $\ex_2(n, F)$ when $F=K_4^-$, the $3$-graph on $4$ vertices with $3$ edges. Using flag algebra techniques, we prove that if $n$ is sufficently large then
\[\ex_2(n, K_4^-)\leq \frac{n+1}{4}.\]
This settles in the affirmative a conjecture of Nagle~\cite{Nagle99}. In addition, we obtain a stability result: for every near-extremal configuration $G$, there is a quasirandom tournament $T$ on the same vertex set such that $G$ is $o(n^3)$-close in the edit distance to the $3$-graph $C(T)$ whose edges are the cyclically oriented triangles from $T$. For infinitely many values of $n$, we are further able to determine $\ex_2(n, K_4^-)$ exactly and to show that tournament-based constructions $C(T)$ are extremal for those values of $n$.\\
\textbf{Mathematics classification codes:} 05D99; 05C65; 05C20. 
\end{abstract}

\section{Introduction}\label{section: introduction}

Interest in the extremal theory of hypergraphs dates back to Tur\'an's celebrated 1941 paper~\cite{Turan41}. However, despite significant efforts from the research community, the problem of determining the Tur\'an density of a given hypergraph $F$ is open in all but a small number of cases --- see e.g.\  Keevash's survey of the field~\cite{Keevash11}. The difficulty of the problem has led researchers to investigate a number of other notions of extremal density, notably the codegree density, which is studied in this paper. Before we describe our results in detail, we introduce some basic notation.

\subsection{Notation}
Given a set $A$, we write $A^{(r)}$ for the collection of all unordered $r$-tuples from $A$. Write $[n]$ for the set $\{1,2,\ldots n\}$. A \emph{$3$-graph} or \emph{triple system} is a pair $G=(V,E)$, where $V=V(G)$ is a set of vertices and $E=E(G)\subseteq V^{(3)}$ is a collection of unordered triples, which constitute the edges of $G$. We set $v(G):=\vert V(G)\vert$ and $\vert G\vert:=\vert E(G)\vert$. For notational convenience, we shall often identify a $3$-graph with its edge-set, and write $x_1x_2\ldots x_r$ to denote the set $\{x_1,x_2, \ldots x_r\}$. Given two $3$-graphs $G$ and $G'$ on a common vertex set $V$, their \emph{edit distance} $\vert G\Delta G'\vert$ is the size of the symmetric difference of their edge-sets. The \emph{link graph} of a vertex $x$ in a $3$-graph $G$ is 
\[G_x:=(V(G)\setminus \{x\}, \{yz: \ xyz\in E(G)\}),\]
and the  \emph{joint neighbourhood} of a pair $\{x,y\}$ is 
\[G_{xy}:=\{z: \ xyz \in E(G)\}.\]

A \emph{subgraph} of $G$ is a $3$-graph $H$ with $V(H)\subseteq V(G)$ and $E(H)\subseteq E(G)$. If $G$ does not contain a copy of $F$ as a subgraph, we say that $G$ is \emph{$F$-free}. The \emph{Tur\'an number} $\ex(n, F)$ of a non-empty $3$-graph $F$ is the maximum number of edges in an $F$-free $3$-graph on $n$ vertices, and its \emph{Tur\'an density} is the limit $\pi(F):=\lim_{n\rightarrow \infty} \ex(n,F)/\binom{n}{3}$ (this is easily shown to exist). In this paper we shall be interested in variants of the Tur\'an number and the Tur\'an density.

The \emph{codegree} of a pair $xy\in V(G)^{(2)}$ is $d(x,y):=\vert G_{xy}\vert$, the number  of edges of $G$ containing the pair $xy$. The \emph{minimum codegree} of $G$, which we denote by $\delta_2(G)$, is the minimum of $d(x,y)$ over all pairs $xy\in V(G)^{(2)}$. The \emph{codegree threshold} $\ex_2(n,F)$ of a non-empty $3$-graph $F$ is the maximum of $\delta_2(G)$ over all $F$-free $3$-graphs on $n$ vertices. A probabilistic averaging argument~\cite{MubayiZhao07} yields that the limit 
\[\pi_2(F):=\lim_{n\rightarrow \infty} \frac{\ex_2(n,F)}{n-2}\] 
exists; this quantity is called the \emph{codegree density} of $F$. Another straightforward averaging argument shows that $0\leq \pi_2(F)\leq \pi(F)\leq 1$, and it is known that $\pi_2(F)\neq \pi(F)$ in general (see also Section~\ref{subsection: background}).

In addition to $3$-graphs, we shall also need to consider tournaments in this paper. An \emph{oriented graph} is an ordinary graph together with an orientation of its edges. A \emph{tournament} is an orientation of a complete graph. In an oriented graph $O$, we denote by $N_O^-(x) $ and $N_O^+(x)$ the \emph{in-neighbourhood} and \emph{out-neighbourhood} of a vertex $x$ respectively, that is, the collection of $y$ such that the edge $\{x,y\}$ is oriented into $x$ (as $\vec{yx}$) and out of $x$ (as $\vec{xy}$) respectively. We write $d_O^-(x):=\vert N_O^-(x) \vert$ and $d_O^+(x):= \vert N_O^+(x)\vert$ for the \emph{in-degree} and \emph{out-degree} of $x$. Further, we write $d_O^-(x, Y)$ and $d_O^+(x, Y)$ for the number of in- and out-edges of $x$, respectively, with the other endpoint being inside the set $Y$. Note that when the oriented graph $O$ is clear from the context, we omit the subscript in the in- and out-degree notation.
\begin{figure}
\hfill\foreach \n in {21,22}{ \includegraphics[page=\n]{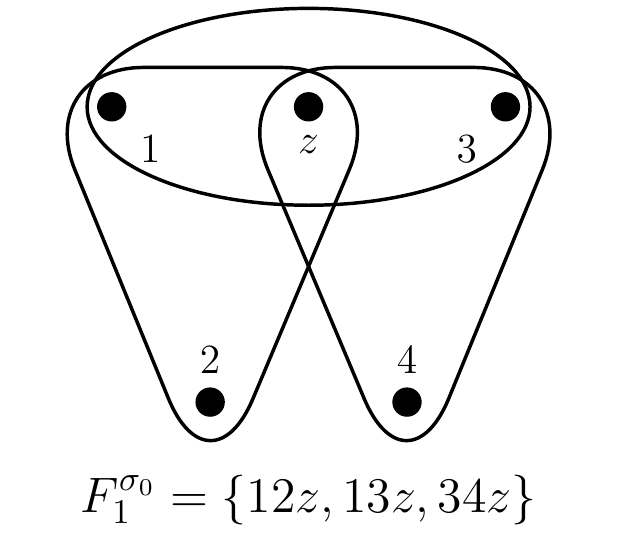}\hfill}

\caption{Configurations in a tournament $T$ on the left and right counted by $C_T(x,y)$ and $R_T(x,y)$, respectively.}

\label{fig:C(xy) R(xy)}

\end{figure}
For a tournament $T$ and an arc $\vec{xy} \in E(T)$, we write $C_T(x,y)$ to denote the number of cyclically oriented triangles in $T$ that contains both $x$ and $y$, i.e., the number of vertices $z \in V(T)$ such that both $\vec{yz}$ and $\vec{zx}$ are arcs in $T$. Similarly, we let 
\[R_T(x,y) := \left\vert \{z \in V(T): \{\vec{xz},\vec{zy}\} \in E(T)  \} \right\vert.\] 
Note that $R_T(x,y)$ is equal to the number of cyclically oriented triangles containing both of $x$ and $y$ in the tournament $T'$ obtained from $T$ by reversing the orientation of $\vec{xy}$.

Motivated by the relation between tournaments and $K_4^-$-free $3$-graphs described in Construction~\ref{construction: C(T)} below, we define a notion of \emph{codegree} for tournaments: given a tournament $T$, we define its minimum codegree to be
\[\delta_2(T) := \min\limits_{xy\in V(T)^{(2)}} C_T(x,y).\]
We shall use standard Landau notation throughout this paper: for functions $f,g:\ \mathbb{N}\rightarrow \mathbb{R}_{\geq 0}$ we write $f=o(g)$ if $f(n)/g(n)\rightarrow 0$ as $n\rightarrow \infty$, $f=O(g)$ if $\limsup_{n\rightarrow\infty} \vert f(n)/g(n)\vert $ is finite, and $f=\Omega(g)$ if $g=O(f)$. Finally, we write $f=\Theta(g)$ if $f=\Omega(g)$ and $f=O(g)$ both hold.
\subsection{Results}
In this paper we study the codegree threshold of $K_4^-:=([4], \{123, 124, 134\})$. This is the unique (up to isomorphism) $3$-graph on $4$ vertices with $3$ edges, or, alternatively, this is the complete $3$-graph on $4$ vertices with one edge removed. From the perspective of Tur\'an-type problems, the $3$-graph $K_4^-$ is the smallest non-trivial $3$-graph. Moreover  $K_4^-$-free $3$-graphs have a nice interpretation in terms of their link graphs:
a $3$-graph is $K_4^-$-free if and only if its link graphs are triangle-free. Determining the codegree threshold for $K_4^-$ can thus be viewed as a $3$-graph generalisation of the minimum degree version of Mantel's theorem. In 1999, Nagle~\cite{Nagle99} made the following conjecture (see also~\cite{CzygrinowNagle01}):
\begin{conjecture}[Nagle]\label{conjecture: Nagle99}
$\pi_2(K_4^-)=1/4$.
\end{conjecture}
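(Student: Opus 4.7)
The plan is to establish Nagle's conjecture by proving matching upper and lower bounds on $\pi_2(K_4^-)$.

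For the lower bound $\pi_2(K_4^-) \geq 1/4$, I would exhibit $K_4^-$-free $3$-graphs with minimum codegree $(1/4 - o(1))n$ using the tournament construction $C(T)$ (whose edges are the cyclically oriented triangles of $T$). First I would verify that $C(T)$ is always $K_4^-$-free: using the standard identity that the number of transitive triangles in a tournament equals $\sum_v \binom{d^+(v)}{2}$, any $4$-vertex tournament contains at most two cyclic triangles (the maximum being attained by score sequence $(1,1,2,2)$), so no four vertices of $C(T)$ can support three edges. For the quantitative bound, I take $T$ to be a uniformly random tournament on $[n]$: for each pair $xy$, the quantity $C_T(x,y)$ is a sum of $n-2$ independent Bernoulli$(1/4)$ indicators with mean $(n-2)/4$, and a Chernoff bound combined with a union bound over the $\binom{n}{2}$ pairs shows that $\delta_2(C(T)) \geq n/4 - o(n)$ with high probability. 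A Paley tournament on $n$ vertices (for $n$ a prime power with $n \equiv 3 \pmod 4$) gives a fully explicit deterministic alternative via classical character-sum estimates for $C_T(x,y)$.

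For the upper bound $\pi_2(K_4^-) \leq 1/4$, I would use the flag algebra method of Razborov, adapted to the codegree setting. The aim is to produce a sum-of-squares certificate: positive semidefinite matrices $Q_\sigma$ indexed by a finite collection of labelled types $\sigma$, such that the associated flag-algebraic identity, combined with the codegree assumption $d(x,y) \geq (1/4+\eta)(n-2)$ encoded as a linear constraint on densities of small $3$-graphs, forces the density of $K_4^-$-subgraphs to be positive. Concretely I would (i) enumerate all $K_4^-$-free $3$-graphs on up to seven vertices, together with all types on up to five labelled vertices, (ii) assemble the resulting semidefinite program expressing the conjectured bound, (iii) solve the SDP numerically using a solver such as CSDP or SDPA, and (iv) round the approximate solution to an exact rational (or algebraic) certificate, verifiable by an independent script checking positive semidefiniteness and the relevant linear inequalities on small $3$-graphs.

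The main obstacle is the upper bound. Producing a flag algebra certificate that matches $1/4$ tightly is computationally demanding, and the rounding step is typically the bottleneck. A further serious complication, foreshadowed by the stability part of the paper's main theorem, is that the extremal configurations $C(T)$ form an infinite-dimensional family parameterised by quasirandom tournaments rather than being essentially unique. This continuous family of near-tight solutions tends to render the SDP ill-conditioned and push near-zero eigenvalues into the $Q_\sigma$, so the kernel vectors must be identified and preserved exactly during rounding. Overcoming this likely requires exploiting the tournament structure in the design of types (for example, by introducing types whose automorphism group mirrors that of a cyclic triangle), so that the manifold of extremal solutions is absorbed into the symmetry of the certificate rather than fought against numerically.
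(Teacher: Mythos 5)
Your proposal matches the paper's approach essentially exactly: the lower bound comes from the Erd\H{o}s--Hajnal construction $C(T)$ with a random (quasirandom) tournament, and the upper bound from a computer-generated flag-algebra certificate on $K_4^-$-free $3$-graphs with up to seven vertices, with the codegree hypothesis encoded via products with $3{\mathrm E}^\tau-{\mathrm N}^\tau$ and with the degeneracy caused by the tournament-realizable extremal family handled exactly as you anticipate (the paper projects each $5$-vertex type's flag space onto a complement of the span of the extremal homomorphism vectors via the matrices $I_i$ before imposing positive definiteness). The only caveat is that your plan stops short of producing the actual rational certificate, which is the substance of the paper's Lemma~\ref{lemma: flag}.
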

\noindent The lower bound in Nagle's conjecture comes from an old construction due to Erd{\H o}s and Hajnal~\cite{ErdosHajnal72}:
\begin{construction}[Erd\H{o}s--Hajnal tournament construction]\label{construction: C(T)}
Given a tournament $T$ on the vertex set $V$, define a $3$-graph $C(T)$ on the same vertex set by setting $E(C(T))$ to consist of the elements of $V^{(3)}$ that induce a cyclically oriented triangle in $T$. 
\end{construction}
\noindent It can be easily checked that no $4$-vertex tournament contains more than two cyclically oriented triangles, whence $C(T)$ is a $K_4^-$-free $3$-graph. Also note that $\delta_2(T) = \delta_2(C(T))$. Moreover, if the tournament $T$ is chosen uniformly at random then $\delta_2(T)=n/4 +o(n)$ with high probability.

In this paper we settle Nagle's conjecture in the affirmative:
\begin{theorem}[Codegree density]\label{theorem: codegree density}
$\pi_2(K_4^-)=1/4$.
\end{theorem}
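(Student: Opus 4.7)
The plan is to prove the two matching inequalities separately. The lower bound $\pi_2(K_4^-) \geq 1/4$ follows from Construction 1.3: taking $T$ to be a uniformly random tournament on $[n]$, the random variables $C_T(x,y)$ are sums of $n-2$ independent $\mathrm{Bernoulli}(1/4)$ indicators, and so by a Chernoff inequality together with a union bound over pairs, $\delta_2(T) \geq n/4 - o(n)$ with high probability. This produces a $K_4^-$-free $3$-graph $C(T)$ of the required minimum codegree, and sending $n\to\infty$ yields the lower bound.

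For the upper bound, I would apply Razborov's flag algebra method. The starting observation is structural: a $3$-graph is $K_4^-$-free precisely when every link graph $G_x$ is triangle-free, and $\delta_2(G)$ equals the minimum over $x$ of the minimum degree of $G_x$. Hence the codegree threshold is the supremum of those $c$ for which there exists a family of $n$ coherently-glued triangle-free link graphs, each of minimum degree at least $(c-o(1))n$. Mantel-type arguments applied link-by-link are too weak to rule out $c$ strictly above $1/4$; one needs flag algebras to exploit the global compatibility tying the links together. Concretely, if $\tau$ is the type consisting of a labeled pair of vertices and $F^\star$ denotes the $\tau$-flag on three vertices containing a single edge through the root pair, then $d(x,y)/(n-2)$ equals the rooted density of $F^\star$ in $G$ at the pair $xy$. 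To certify $\pi_2(K_4^-)\leq 1/4$ one searches, using a tool such as \textsc{flagmatic}, for a nonnegative combination $\sum_i c_i \unlab{h_i^2}{\tau_i}$ of averaged squared flags over a suitable set of types $\tau_i$ and flags $h_i$ (of size up to seven or eight vertices) which, combined with the codegree identity, bounds the minimum rooted density of $F^\star$ by $1/4$ in every limit of $K_4^-$-free $3$-graphs. This amounts to solving a semidefinite program whose optimal value upper-bounds the codegree density.

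The main obstacle is extracting from the numerical SDP output a rigorous rational certificate. The near-extremal configurations form a continuous family parameterised by quasi-random tournaments, so the dual PSD matrices must carry a nontrivial kernel that naive rounding would destroy, leaving the rounded solution infeasible. I would follow the by-now standard heuristic: identify the tight constraints forced by the extremal family $C(T)$, project the numerical solution onto the orthogonal complement of the kernel forced by these constraints, round the positive eigendirections to nearby rationals, and verify the resulting exact solution symbolically (as in the paper's \texttt{verify.sage}). A successful round then yields the bound $\pi_2(K_4^-)\leq 1/4$ and hence the theorem.
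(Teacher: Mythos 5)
Your proposal follows essentially the same route as the paper: the lower bound via the Erd\H{o}s--Hajnal construction $C(T)$ for a random tournament, and the upper bound via Razborov's semi-definite method rooted at the codegree type $\tau$, with the extremal family of quasirandom tournaments forcing a kernel in the dual matrices that must be respected when rounding to an exact rational certificate. The only substantive content you defer --- the existence of such a certificate on at most seven vertices --- is precisely what the paper supplies in Lemma~\ref{lemma: flag} (with the codegree assumption entering through the terms $\unlab{F\times(3\mathrm{E}^\tau-\mathrm{N}^\tau)}{\tau}$ and the contradiction drawn from $\phi_0(\mathrm{N}-3\mathrm{E})<0$), so your outline is correct but its key step rests on the computation the authors actually performed.
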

\noindent Our proof of Theorem~\ref{theorem: codegree density} relies on flag algebra techniques: using the semi-definite method of Razborov~\cite{Razborov10}, 
we establish an asymptotic identity (Lemma~\ref{lemma: flag}) between densities of subgraphs on at most seven vertices in $K_4^-$-free $3$-graphs, from which we deduce Nagle's conjecture. Further, by analysing this identity, we are able to show that all near-extremal configurations look like the random tournament construction described above. To make this more precise, let us recall one of the many equivalent definitions of a \emph{quasirandom} tournament (for other forms, see Chung and Graham~\cite{ChungGraham91}).
\begin{definition}
	A tournament $T$ on $[n]$ is \emph{$\delta$-quasirandom} if for every pair of sets $X, Y\subseteq [n]$ we have
	\[\sum_{x\in X} \vert d^+(x, Y) -d^-(x,Y)\vert \leq \delta n^2.\]
\end{definition}
\begin{theorem}[Stability]\label{theorem: stability}
Let $G$ be a $K_4^-$-free $3$-graph on $[n]$ with $\delta_2(G)\geq n/4-o(n)$. Then there exists a $o(1)$-quasirandom tournament $T$ on $[n]$ such that $\vert G\Delta C(T)\vert =o(n^3)$.
\end{theorem}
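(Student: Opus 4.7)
The plan is to extract the stability statement directly from the flag-algebra identity of Lemma~\ref{lemma: flag}, which expresses $\delta_2(G)/(n-2) - 1/4 + o(1)$ as a nonnegative linear combination of densities of $K_4^-$-free subhypergraphs on at most $7$ vertices. For $G$ with $\delta_2(G)\geq n/4-o(n)$, every positively weighted term in the identity must contribute $o(1)$, so each of these ``bad'' configurations has density $o(1)$ in $G$. Consequently, the densities of all at-most-$7$-vertex induced subhypergraphs of $G$ converge to the corresponding densities in $C(T^*)$ for a quasirandom tournament $T^*$. In particular, the three $K_4^-$-free $4$-vertex patterns (with zero, one and two edges, denoted $E_0, E_1, E_2$) appear in $G$ with densities $3/8+o(1)$, $1/4+o(1)$ and $3/8+o(1)$ respectively.

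Given this density rigidity, the next step is to extract a tournament $T$ on $[n]$ such that $\vert G \Delta C(T)\vert=o(n^3)$. For each pair $\{x,y\}\subset[n]$, I would use the distribution of induced $4$-vertex subgraphs on $\{x,y,z,w\}$, as $z,w$ vary, to partition $[n]\setminus\{x,y\}$ into four quadrants according to the sign-pattern of $(\vec{xz},\vec{yz})$ inside the putative tournament, and then orient $\vec{xy}$ in $T$ by a plurality vote consistent with this partition. The density bounds from step one force this partition to be consistently defined on all but $o(n^2)$ pairs $\{x,y\}$; on the remaining exceptional pairs, we orient arbitrarily. A double-counting argument over all triples $\{x,y,z\}$, using that ``wrong-quadrant'' configurations have density $o(1)$ and that the link graphs of $G$ are triangle-free, then yields $\vert G\Delta C(T)\vert=o(n^3)$.

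Finally, to show that the extracted $T$ is $o(1)$-quasirandom, I would appeal to the Chung--Graham equivalence~\cite{ChungGraham91} that a tournament is quasirandom iff the $4$-vertex subtournament densities converge to their uniform-random values. The map $C$ collapses sink-cyclic and source-cyclic $4$-tournaments to the common $4$-vertex $3$-graph $E_1$, so the $4$-vertex densities in $G$ determine three of the four $4$-vertex subtournament densities of $T$ but not the individual sink-cyclic versus source-cyclic split. To pin this down I would use the $5$-vertex information from Lemma~\ref{lemma: flag}, exploiting that $5$-vertex extensions of sink-cyclic and source-cyclic $4$-tournaments yield distinguishable $5$-vertex $3$-graph types under $C$. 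Translating the resulting $4$-vertex subtournament count equalities into the uniform discrepancy bound $\sum_{x\in X}\vert d^+(x,Y)-d^-(x,Y)\vert \leq \delta n^2$ with $\delta=o(1)$ is a short standard step via the Chung--Graham equivalences.

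The principal obstacle is the extraction: the flag-algebra output only constrains densities up to $o(1)$, so the plurality orientation rule must be globally consistent and robust to low-density exceptions, and one must rule out alternative near-extremal structures in which the orientation cannot be consistently defined. A secondary challenge is the passage from the subtournament density profile of $T$ to the vertex-discrepancy form of quasirandomness used in the theorem statement, for which the sharpness of the $o(n^2)$ slack in the discrepancy bound must be carefully tracked through the various equivalences.
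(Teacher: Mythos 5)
There is a genuine gap, and it sits at the two places your plan treats as routine. First, the ``density rigidity'' you assert after evaluating the identity of Lemma~\ref{lemma: flag} does not follow directly from the vanishing of the positively weighted terms. What the vanishing gives you is only: (a) non-tournament-realizable $7$-vertex configurations have density $o(1)$, (b) the tight-path expressions $\cP_0,\cP_1,\cP_2$ evaluate to $0$, and (c) certain squares vanish almost surely under the random rooted homomorphisms. None of this by itself pins down the limiting values of the $4$-vertex (or $7$-vertex) densities; showing that the limit must equal $\phi_T$ is precisely the content of Theorem~\ref{thm:flag}, and the paper needs the whole chain --- induced hypergraph removal~\cite{RodlSchacht09} to upgrade ``almost all'' to ``all'' $7$-vertex subgraphs being tournament-realizable, Proposition~\ref{proposition: tightly connected quadruples} to get tight connectivity of almost all vertex pairs, Proposition~\ref{prop:tournament} to build the orientation, and then the Coregliano--Razborov criterion~\cite{CoreglianoRazborov17} plus Chung--Graham~\cite{ChungGraham91} --- to conclude it. Your step one assumes the conclusion of this chain.

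Second, your extraction of $T$ by a per-pair ``plurality vote'' over quadrants of $(\vec{xz},\vec{yz})$ is circular and cannot work as stated: the quadrants are defined in terms of orientations of the pairs $\{x,z\},\{y,z\}$ in ``the putative tournament,'' which is exactly the object being constructed, and since $C(T)$ is invariant under reversing every arc of $T$, no statistic of the unlabelled $3$-graph local to a single pair can distinguish $\vec{xy}$ from $\vec{yx}$. The orientation must be fixed by a global, propagated choice: the paper anchors one reference pair $\{a,b\}$, propagates its orientation along tight paths of length at most $3$ (each edge of a tight path forced to be a cyclic triangle), and proves well-definedness by observing that the union of two such paths spans at most $7$ vertices and is therefore tournament-realizable after the removal-lemma cleanup --- this consistency argument is the key idea missing from your proposal, and without it the ``consistently defined on all but $o(n^2)$ pairs'' claim has no justification. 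Your third step (recovering quasirandomness from $4$- and $5$-vertex subtournament densities) could in principle be made to work, but the paper's route via the fact that almost every arc of $T_n$ lies in $n/4-o(n)$ cyclic triangles, combined with~\cite{CoreglianoRazborov17}, is both shorter and avoids the sink-cyclic versus source-cyclic disentangling you describe.
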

Finally, we are able to show that tournament-based constructions are extremal for large $n$ and to calculate the exact value of the codegree threshold for infinitely many $n$.
A \emph{skew Hadamard matrix} of order $n$ is an $n\times n$ square matrix $A$ with $\pm1$ entries such that (i) $AA^t= nI_n$, and (ii) $A+A^t=2I_n$. Here $I_n$ denotes the $n\times n$ identity matrix and $A^t$ denotes the transpose of the matrix $A$.
\begin{theorem}[Codegree threshold]\label{theorem: codegree threshold}
For all $n$ sufficiently large,
\[\ex_2(n, K_4^-)\leq \floor{\frac{n+1}{4}}.\]
Further, for all $k$ sufficiently large if there exists a skew Hadamard matrix of order $4k+4$, then for $n=4k+3$ and $n=4k+2$ we have equality in the equation above. Moreover, for all $k$ sufficiently large if $n=4k+3$ and $\ex_2(n, K_4^-)= \floor{\frac{n+1}{4}}$, then every extremal $3$-graph for that value of $n$ is given by an Erd{\H o}s--Hajnal tournament construction and there  exists a skew Hadamard matrix of order $n+1=4k+4$.
\end{theorem}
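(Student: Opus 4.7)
I would first establish that $\delta_2(C(T))\leq \lfloor(n+1)/4\rfloor$ for every tournament $T$ on $n$ vertices. Double counting yields $\sum_{xy\in V(T)^{(2)}} C_T(x,y) = 3c_3(T)$, where $c_3(T)$ denotes the number of cyclically oriented triangles in $T$: each such triangle is counted once for each of its three arcs. Using $c_3(T) = \binom{n}{3} - \sum_v \binom{d^+(v)}{2}$ together with convexity applied to the constraint $\sum_v d^+(v) = \binom{n}{2}$ gives $c_3(T) \leq n(n-1)(n+1)/24$. Hence the average codegree in $C(T)$ is at most $(n+1)/4$, and so $\delta_2(C(T))\leq \lfloor(n+1)/4\rfloor$. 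Moreover, equality analysis is crucial for the characterisation: equality throughout forces $n$ odd, $d^+(v)=(n-1)/2$ for every $v$, and all the values $C_T(x,y)$ equal. The last condition forces $(n+1)/4$ to be an integer (so $n\equiv 3\pmod 4$) and $T$ to be \emph{doubly regular}, which by a classical correspondence is equivalent to the existence of a skew Hadamard matrix of order $n+1$.

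\paragraph{Step 2: Cleaning a near-extremal $G$ to a tournament construction.}
The heart of the proof is to show that any $K_4^-$-free $3$-graph $G$ on $[n]$, with $n$ sufficiently large and $\delta_2(G)\geq \lfloor(n+1)/4\rfloor$, must actually equal $C(T')$ for some tournament $T'$ on $[n]$; combining this with Step~1 then gives the upper bound. By Theorem~\ref{theorem: stability}, $|G\Delta C(T)|=o(n^3)$ for some $o(1)$-quasirandom tournament $T$. Since $G$ is $K_4^-$-free, each link graph $G_v$ is triangle-free, and stability together with the codegree hypothesis forces $G_v$ to be close to the bipartite graph with parts $N_T^+(v), N_T^-(v)$ prescribed by $T$. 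I would argue that this closeness, combined with triangle-freeness and the high minimum degree inherited from $\delta_2(G)\geq(n+1)/4$, forces $G_v$ to be \emph{exactly} bipartite with bipartition $A_v\sqcup B_v$ of $V(G)\setminus\{v\}$. The crux is then to establish \emph{consistency} of these bipartitions across $v$, namely $u\in A_v$ iff $v\in B_u$; once consistency holds, the bipartitions collectively define a tournament $T'$ on $[n]$ with $G=C(T')$. I expect this consistency step to be the main obstacle: a local inconsistency between the bipartitions at two vertices $u,v$ must be ruled out by exhibiting either an explicit copy of $K_4^-$ or a pair whose codegree falls below $\lfloor(n+1)/4\rfloor$, which requires careful local counting that leverages the quasirandomness of $T$ (note in particular that the ratio $\delta_2(G)/(n-1)\approx 1/4$ is too small for the classical Andr\'asfai--Erd\H{o}s--S\'os bipartiteness criterion, so the argument must be a robust local one exploiting the near-bipartite structure supplied by stability).

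\paragraph{Step 3: Matching constructions and characterisation.}
For the lower bound, a skew Hadamard matrix of order $4k+4$ yields, via a standard construction, a doubly regular tournament $T_k$ on $n=4k+3$ vertices with $C_{T_k}(x,y)=k+1$ for all pairs, so $C(T_k)$ is $K_4^-$-free with $\delta_2(C(T_k))=k+1=\lfloor(n+1)/4\rfloor$. For $n=4k+2$, I would delete any vertex from $T_k$: since at most one of the $k+1$ cyclic triangles through any pair $\{x,y\}$ contains the removed vertex, $\delta_2(C(T_k-v))\geq k=\lfloor(n+1)/4\rfloor$. Finally, the characterisation at $n=4k+3$ follows by combining the preceding pieces: if $\ex_2(n,K_4^-)=(n+1)/4$, then Steps~1--2 force any extremal $G$ to equal $C(T')$ for some tournament $T'$ achieving equality in the counting bound of Step~1, and the equality analysis there forces $T'$ to be doubly regular, hence corresponding to a skew Hadamard matrix of order $n+1=4k+4$.
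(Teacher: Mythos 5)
Steps 1 and 3 of your proposal are correct and coincide with what the paper does in Section~\ref{section: Tournament vs Hadamard} (the counting bound of Proposition~\ref{proposition: upper bound on t(n)}, the equality analysis of Corollary~\ref{corollary: extremals are regular}, and the Reid--Brown correspondence with skew Hadamard matrices in Propositions~\ref{proposition: t(n) bound tight iff exists skew Hadamard matrix} and~\ref{prop:  existence of good tournaments}). The genuine gap is Step 2, which is the heart of the matter and is only sketched. You assert that \emph{every} $K_4^-$-free $G$ with $\delta_2(G)\geq\floor{(n+1)/4}$ is exactly $C(T')$ for some tournament $T'$; this is stronger than anything the paper establishes -- exactness is obtained there only for $n\equiv 3\pmod 4$, and only via the rigidity of the equality case (Corollary~\ref{corollary: extremals are regular}), while the structure of extremal examples for other residues is left open. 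More concretely, the two sub-steps you defer do not follow from the ingredients you cite. First, ``triangle-free $+$ minimum degree $\approx m/4$ $+$ $o(m^2)$-close to bipartite $\Rightarrow$ exactly bipartite'' is false for graphs: take the complete bipartite graph with parts $A,B$ of size $m/2$, split $B=B_1\cup B_2$ into halves, let $a_1,a_2\in A$ be joined only to $B_1$ and $B_2$ respectively, and add the edge $a_1a_2$; this graph is triangle-free, has minimum degree about $m/4$, is $O(m)$-close to complete bipartite, yet contains the odd cycle $a_1a_2b_2a_3b_1$ with $b_i\in B_i$, $a_3\in A$. So any exact-bipartiteness claim for the links must use genuinely $3$-uniform information, not just the link-level hypotheses you list. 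Second, Theorem~\ref{theorem: stability} gives only the global bound $\vert G\Delta C(T)\vert=o(n^3)$; it does not give per-vertex or per-pair control of the links, and obtaining such control is precisely where the paper's technical work lies (Claims~\ref{claim:  Yx- small}--\ref{claim:CodegreeG-CT}), requiring the tournament $T$ to be chosen as a \emph{best fit} maximising $\vert G\cap C(T)\vert$ and exploiting arc-reversal (switching) arguments such as~\eqref{eq:Reverse}. Your sketch contains neither the extremal choice of $T$ nor any switching mechanism, and the ``consistency of bipartitions'' step you flag as the main obstacle is exactly the part that is missing.

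It is also worth noting how the paper avoids the strong claim you aim for: instead of exact structure at the threshold, it proves the weaker but sufficient edge-domination statement (Theorem~\ref{theorem: largest size among codegree extremal achieved by tournaments}) that for the best-fit tournament $T$ one has $\vert M\vert\geq\vert B\vert$, i.e.\ $\vert G\vert\leq\vert C(T)\vert$, with equality only if $G=C(T)$. The upper bound then follows purely by averaging, $\delta_2(G)\leq\floor{3\vert G\vert/\binom n2}\leq\floor{3\vert C(T)\vert/\binom n2}\leq\floor{(n+1)/4}$, and exactness is extracted only for $n=4k+3$, where codegree-regularity (Corollary~\ref{corollary: extremals are regular}) forces $\vert G\vert=\vert C(T)\vert$ and hence $G=C(T)$. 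To make your route work you would have to supply a full proof of the link-bipartiteness and consistency claims at the exact threshold for all residues of $n$ (which the counterexample above suggests cannot be done link-by-link and may in fact be false as stated), or else retreat to an edge-count comparison of the paper's type; as written, Step 2 is an assertion of the theorem's hardest content rather than a proof of it.
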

In~\cite{Wallis}, Seberry (n\'ee Wallis) conjectured a strengthening of Hadamard's conjecture stating that there exists a skew Hadamard matrix for any order $n$ divisible by $4$. It is well-known that every (not necessarily skew) Hadamard matrix must have order $1$, $2$ or a multiple of $4$. Seberry's conjecture is known to hold for $n<276$, see~\cite{Dokovic}, and for all values of $n$ that are of the form $n= 2^t \prod_{i\in I}(q_i+1)$, where $t\in \mathbb{Z}_{\geq 0}$, $I\neq\emptyset$ and $q_i$ is a prime power congruent to $3$  modulo $4$ for every $i\in I$, see~\cite[Theorem~4.1]{WallisStreetSeberry72}.

\begin{corollary}\label{corollary: seberry implies codegree threshold}
If Seberry's conjecture is true, then for all $n$ sufficiently large
\[\ex_2(n, K_4^-)=\left\{\begin{array}{ll} \floor{\frac{n+1}{4}} & \textrm{if }n\equiv 2,3\pmod 4,\\
\floor{\frac{n+1}{4}} \textrm{ or } \floor{\frac{n-3}{4}} & \textrm{if }n\equiv 0,1 \pmod 4.
\end{array}
 \right.\] 
\end{corollary}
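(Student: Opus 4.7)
The plan is to use Theorem~\ref{theorem: codegree threshold} as a black box: it supplies both the upper bound $\ex_2(n, K_4^-) \leq \floor{(n+1)/4}$ for all sufficiently large $n$, and matching lower bounds at $n=4k+2, 4k+3$ whenever a skew Hadamard matrix of order $4k+4$ exists. The remaining work, under Seberry's conjecture, is to supply $K_4^-$-free constructions on $n$ vertices achieving $\delta_2 \geq \floor{(n-3)/4}$ when $n \equiv 0, 1 \pmod 4$, so that $\ex_2(n)$ is forced into the two-element set $\{\floor{(n-3)/4}, \floor{(n+1)/4}\}$.

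For $n \equiv 2, 3 \pmod 4$ I would simply apply Theorem~\ref{theorem: codegree threshold}, since Seberry's conjecture hands us a skew Hadamard matrix of order $4k+4$ with $n=4k+2$ or $n=4k+3$, giving $\ex_2(n)=\floor{(n+1)/4}$. For $n=4k+5 \equiv 1 \pmod 4$, I would apply Seberry at order $4k+8$, so that Theorem~\ref{theorem: codegree threshold} provides a $K_4^-$-free 3-graph $G$ on $4k+6$ vertices with $\delta_2(G)=k+1$. Deleting any single vertex loses at most $1$ from each codegree and produces a $K_4^-$-free 3-graph on $n$ vertices with $\delta_2 \geq k = \floor{(n-3)/4}$, which combined with the upper bound lands $\ex_2(n)$ in the required two-element set.

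The interesting case is $n=4k+4 \equiv 0 \pmod 4$, where the analogous trick of deleting two vertices from a $(4k+6)$-vertex extremal example only yields $\delta_2 \geq k-1$, which is insufficient. Instead I would construct a tournament $T'$ on $n$ vertices with $\delta_2(T') \geq k$ and apply Construction~\ref{construction: C(T)}. Seberry at order $4k+4$ yields, via the standard construction (normalize so the first row of $H$ is $+1$, then delete the first row and column), a doubly regular tournament $T_0$ on $4k+3$ vertices; recall that such a tournament has $d^+_{T_0}(v)=d^-_{T_0}(v)=2k+1$ for every $v$ and satisfies the structural identity $\vert N^+_{T_0}(u)\cap N^+_{T_0}(v)\vert=\vert N^-_{T_0}(u)\cap N^-_{T_0}(v)\vert=k$ for every pair of distinct $u,v$. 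Fix any $v_0 \in V(T_0)$, let $S := N^+_{T_0}(v_0)$ and $A := V(T_0) \setminus S = \{v_0\} \cup N^-_{T_0}(v_0)$, and extend $T_0$ to a tournament $T'$ on $V(T_0) \cup \{u_0\}$ by adjoining arcs $u_0 \to x$ for $x \in A$ and $x \to u_0$ for $x \in S$. Codegrees of pairs inside $V(T_0)$ can only grow when $u_0$ is added, so they remain $\geq k+1$. For the pairs involving $u_0$, a quick case analysis using the doubly regular identity gives $C_{T'}(u_0, v_0) = \vert N^+_{T_0}(v_0) \cap S \vert = 2k+1$, $C_{T'}(u_0, x) = \vert N^+_{T_0}(x) \cap N^+_{T_0}(v_0) \vert = k$ for $x \in N^-_{T_0}(v_0)$, and $C_{T'}(x, u_0) = [v_0 \to x] + \vert N^-_{T_0}(v_0) \cap N^-_{T_0}(x) \vert = 1 + k = k+1$ for $x \in S$. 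Every codegree is $\geq k$, so $\delta_2(T') \geq k$ and $C(T')$ furnishes the required $K_4^-$-free 3-graph on $n$ vertices.

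The main obstacle is the $n \equiv 0 \pmod 4$ case: one must invoke Seberry at the exact order $4k+4$ (not at the larger order $4k+8$ used for the $n \equiv 1$ case) so that the new vertex $u_0$ can be grafted onto the doubly regular $T_0$ in a way that exploits the identity $\vert N^+(u)\cap N^+(v)\vert=k$. Once one commits to the choice of in-neighbourhood $S=N^+_{T_0}(v_0)$, the rest is a short three-subcase calculation.
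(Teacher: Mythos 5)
Your proposal is correct and essentially follows the paper's own route: the upper bound and the $n\equiv 2,3\pmod 4$ cases come straight from Theorem~\ref{theorem: codegree threshold}, and your mirror-vertex extension of the doubly regular tournament on $4k+3$ vertices (in-neighbourhood $S=N^+_{T_0}(v_0)$) is exactly the construction the paper uses in Proposition~\ref{prop:  existence of good tournaments} to get $t(4k+4)\ge k$, with your three-case codegree count playing the role of the paper's appeal to $R_{T_k}(v,x)=k$ from Corollary~\ref{corollary: extremals are regular}. The only inessential deviation is the $n\equiv 1\pmod 4$ case, where you delete one vertex from an extremal configuration on $4k+6$ vertices (invoking Seberry at order $4k+8$) rather than two vertices from the doubly regular tournament on $4k+3$ vertices as in Proposition~\ref{prop:  existence of good tournaments}; both yield the lower bound $\floor{\frac{n-3}{4}}$.
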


\subsection{Background}\label{subsection: background}
In the late 1990s, Nagle~\cite{Nagle99} and Czygrinow and Nagle~\cite{CzygrinowNagle01} made conjectures on the values of the codegree densities $\pi_2(K_4^-)$ and $\pi_2(K_4)$ respectively, where $K_4$ denotes the complete $3$-graph with $4$ vertices.
Mubayi~\cite{Mubayi05} determined the codegree density of the Fano plane, and Keevash and Zhao~\cite{KeevashZhao07} later extended Mubayi's work to other projective geometries. The precise codegree threshold of the Fano plane was determined for large enough $n$ by Keevash~\cite{Keevash09} using hypergraph regularity, and DeBiasio and Jiang~\cite{DeBiasioJiang12} later gave a second, regularity-free proof of the same result. Mubayi and Zhao~\cite{MubayiZhao07} established a number of theoretical properties of codegree density, while Falgas--Ravry~\cite{FalgasRavry13} gave evidence that codegree density problems for complete $3$-graphs may not be stable in general. Falgas-Ravry, Marchant, Pikhurko and Vaughan~\cite{FalgasRavryMarchantPikhurkoVaughan15} for their part determined the codegree threshold of the $3$-graph $F_{3,2}=\{abc, abd, abe, cde\}$ for all $n$ sufficiently large. Finally, Lo and Zhao~\cite{LoZhao18} determined the asymptotic order of $1-\pi_2(K_t^{(3)})$ as $t\rightarrow \infty$, where $K_t^{(3)}$ denotes the complete $3$-graph on $t$ vertices.

In this paper, we add a new example to this scant list of known non-trivial codegree densities by showing $\pi_2(K_4^-)=1/4$. As the smallest non-trivial $3$-graph from the perspective of Tur\'an--type problems, $K_4^-$ has received extensive attention from researchers in the area. Its Tur\'an density is not known, but is conjectured by Mubayi~\cite{Mubayi03} to be $2/7=0.2857\ldots$, with the lower bound coming from a recursive construction of Frankl and F\"uredi~\cite{FranklFuredi84}. Matthias~\cite{Matthias94} and Mubayi~\cite{Mubayi03} proved upper bounds on $\pi(K_4^-)$, before the advent of Razborov's flag algebra framework~\cite{Razborov07}, and in particular his semi-definite method, led to computer-aided improvements by Razborov~\cite{Razborov10} and Baber and Talbot~\cite{BaberTalbot12},  with the current best upper bound for $\pi(K_4^-)$ being $0.2868\ldots$, see~\cite{FalgasRavryVaughan13}.

In addition, `smooth' variants of the Tur\'an density problem for $K_4^-$ have been studied.
Given $\delta > 0$, the \emph{$\delta$-linear density} of a $3$-graph $G$ is the minimum edge-density attained by an induced subgraph of $G$ on at least $\delta v(G)$ vertices. Erd{\H o}s and S\'os~\cite{ErdosSos82} asked whether there is $\delta > 0$ such that every large enough $3$-graph with positive $\delta$-linear density contains a copy of $K_4^-$.
F\"uredi observed however that the tournament construction $C(T)$ of Erd{\H o}s and Hajnal~\cite{ErdosHajnal72} described in the previous section with $T$ chosen at random gives a negative answer to this question: a density of more than $1/4$ is required for the existence of a $K_4^-$-subgraph. Glebov, Kr\'al' and Volec~\cite{GlebovKralVolec16} showed this $1/4$ lower bound is tight, using flag algebraic techniques amongst other ingredients in their proof. More recently, Reiher, R\"odl and Schacht~\cite{ReiherRodlSchacht16} reproved Glebov, Kr\'al' and Volec's result and established the edge-density at which weakly quasirandom $3$-graphs must contain a copy of $K_4^-$, for various notions of `weakly quasirandom'. The extremal problem for $K_4^-$ under both a codegree and a smoothness assumption had been studied earlier by Kohayakawa, R\"odl and Szemer\'edi (see~\cite{Nagle99,ReiherRodlSchacht16}).

\subsection{Organization of the paper}

In Section~\ref{section: flag algebras} we introduce our setting of the flag algebra framework, establish the key flag algebraic identity (Lemma~\ref{lemma: flag}) and prove Theorem~\ref{theorem: codegree density}. In Section~\ref{section: stability}, we extract some information about near-extremal configurations from the flag algebraic identity in order to show their structure must be close to that of a random tournament construction (Theorem~\ref{theorem: stability}).

After studying the relation between tournaments and skew Hadamard matrices in Section~\ref{section: Tournament vs Hadamard}, we devote Section~\ref{section: codegree threshold} to determining the codegree threshold of $K_4^-$ and prove Theorem~\ref{theorem: codegree threshold}. We conclude the paper with some remarks and open problems presented in Section~\ref{section: conclusion}.

\section{Flag algebras and the codegree density of $K_4^-$}\label{section: flag algebras}

Our proof of Theorem~\ref{theorem: codegree density} uses the flag algebra framework introduced by Razborov in~\cite{Razborov07}, and in particular the semi-definite method first deployed by Razborov in~\cite{Razborov10}; see also~\cite{BaberTalbot12, FalgasRavryMarchantPikhurkoVaughan15, FalgasRavryVaughan13, GlebovHefetzLinialMorgenstern22} for expositions of the basic ideas. Such an approach is by now well established in extremal hypergraph theory, and since a treatment of the general theory of flag algebras is outside the scope of this article, we content ourselves here with giving brisk definitions of some of the standard terms and concepts of flag algebras that we shall use, and refer an interested reader to the papers cited above for further details and discussion.

Let $\cF$ denote the set of all non-isomorphic finite $K_4^-$-free $3$-graphs, and let $\cF_k$ denote the subset of $\cF$ consisting of all non-isomorphic $k$-vertex $K_4^-$-free $3$-graphs. A \emph{type} $\sigma$ is an element of $\cF$ together with a labelling of its vertices, i.e.\   a bijection from $[v(\sigma)]$ to $V(\sigma)$. For a fixed type $\sigma$, we define the set $\cF^\sigma$ to be the collection of all (up to $\sigma$-preserving isomorphism) finite $K_4^-$-free $3$-graphs with a fixed embedding of $\sigma$. The elements of $\cF^\sigma$ are referred to as \emph{$\sigma$-flags}. A $\sigma$-flag can be thought of as a $3$-graph on a partially labelled vertex set, with the labelled vertices inducing a copy of $\sigma$. Note that any $3$-graph can be viewed as an $\emptyset$-flag, where $\emptyset$ is the empty type on $0$ vertices. Analogously to the unlabelled case, we let $\cF^\sigma_k$ denote the set of all $k$-vertex $\sigma$-flags. For a $\sigma$-flag $F$, we define its \emph{root} to be the fixed embedding of $\sigma$ in $F$.

Fix now an $\ell$-vertex type $\sigma$. Given two $\sigma$-flags $F$ and $G$, we let $p(F, G)$ to be the probability that a random extension of the root of $G$ by $v(F)-\ell$ unlabelled vertices of $G$ yields a $\sigma$-flag isomorphic to $F$. In the degenerate case $v(F) > v(G)$, we define $p(F,G)=0$. We refer to $p(F,G)$ as the \emph{$\sigma$-flag density} of $F$ in $G$. Further, given three $\sigma$-flags $F_1, F_2$ and $G$ such that $v(G) = v(F_1) + v(F_2) - \ell$, we let $p(F_1,F_2,G)$ denote the probability that a randomly chosen set of $v(F_1)-\ell$ unlabelled vertices of $G$ extends the root of $G$ to a $\sigma$-flag isomorphic to $F_1$, while the remaining $v(F_2)-\ell$ unlabelled vertices extend the root of $G$ to a $\sigma$-flag isomorphic to $F_2$.

We are now ready to describe the \emph{flag algebra} $\cA^\sigma$. Informally, $\cA^{\sigma}$ is obtained by taking $\Real \cF^\sigma$, i.e., the vector space of all formal finite linear combinations of elements of $\cF^{\sigma}$, and for every $F\in \cF^{\sigma}$ and $k\geq v(F)$ quotienting out the relation
\[\sum\limits_{G \in \cF_k^{\sigma}} p(F, G) \cdot G = F.\]
We then define a multiplication in $\cF^\sigma$ by setting
\[F_1 \times F_2 := \sum\limits_{G \in \cF^\sigma_{v(F_1)+v(F_2)-\ell}} p(F_1,F_2,G) \cdot G \quad\mbox{for every }F_1,F_2 \in \cF^\sigma \;,\]
which then uniquely extend to the whole set $\cA^\sigma$ by~\cite[Lemma 2.4]{Razborov07}. For brevity, we write $\cA$ to denote $\cA^\emptyset$. The \emph{averaging operator} $\llbracket \cdot \rrbracket_\sigma: \cA^\sigma \to \cA$ is defined by 
\[\llbracket F \rrbracket_\sigma = p_F^\sigma \cdot F^{\emptyset},\]
where $F^{\emptyset}$ is the unlabelled $3$-graph obtained from the $\sigma$-flag $F$ by forgetting about its embedding of $\sigma$, and $p_F^{\sigma}$ is the probability that a random injection from $V(\sigma)$ to $V(F^{\emptyset})$ yields an embedding of $\sigma$ such that the resulting $\sigma$-flag is isomorphic to $F$.

We now relate flag algebras to asymptotic properties of $3$-graphs. We say that a sequence of $K_4^-$-free $3$-graphs $(H_n)_{n\in\Nat}$ with $v(H_n)\rightarrow \infty$ is \emph{convergent} if  $\lim_{n\to\infty} p(F,H_n)$ exists for every $F\in\cF$. Since $p(F, H_n)\in[0,1]$ for every pair $(F, H_n)$, Tychonoff's theorem implies that every sequence $(H_n)_{n\in\Nat}$ has a convergent subsequence. For each convergent sequence of graphs $(H_n)_{n\in \Nat}$, we define a function $\phi: \cA \to \Real$, which we call the \emph{limit} of $(H_n)_{n\in \Nat}$, by letting 
\[\phi(F) := \lim_{n\rightarrow \infty} p(F,H_n) \quad\mbox{for every }F\in \cF,\]
and extending it to the rest of $\cA$ by linearity. For a given type $\sigma$, we define convergence and limits for $\sigma$-flag sequences analogously.

Let $\Hom^+(\cA,\Real)$  be the set of all algebra homomorphisms $\phi$ from $\cA$ to $\Real$ such that $\phi(F)\ge0$ for every $F\in\cF$. By construction, for every convergent sequence of $3$-graphs $(H_n)_{n\in \Nat}$ the associated limit $\phi$ is an element of $\Hom^+(\cA,\Real)$. Similarly, given a type $\sigma$ we let $\Hom^+(\cA^\sigma,\Real)$ denote the set of all algebra homomorphisms $\psi$ from $\cA^{\sigma}$ to $\Real$ with $\psi(F^{\sigma})\geq 0$ for every $F^{\sigma}\in \cF^{\sigma}$.

Observe that given a fixed embedding of $\sigma$ in a $3$-graph $H_n$, we have a map $\psi_n: \ F\mapsto p(F, H_n)$ sending a $\sigma$-flag $F$ to its $\sigma$-flag density in $H_n$, which extends by linearity to a map $\Real \cF^{\sigma} \to \Real$.  For a sequence of $3$-graphs $(H_n)_{n\in\Nat}$ converging to the limit $\phi \in \Hom^+(\cA,\Real)$ and $\sigma$ a type such that $\phi(\unlab\sigma\sigma) > 0$, we let $(\Pr_n^\sigma)$ be the sequence of probability distributions on such maps $\psi_n :\Real \cF^{\sigma} \to \Real$ after turning $H_n$ into a $\sigma$-flag by selecting an embedding of $\sigma$ in $H_n$ uniformly at random.  Similarly, let $\Pr_\phi^\sigma$ be the unique probability distribution on $\Hom^+(\cA^\sigma,\Real)$ satisfying
\begin{equation}\label{eq:hom_avg}
\E\limits_{\psi \sim \Pr_\phi^\sigma} \psi\left(f\right) = \frac{\phi\left(\unlab{f}\sigma\right)}{\phi\left(\unlab\sigma\sigma\right)} \quad {\forall f \in \cA^\sigma}\;.
\end{equation}
Razborov proved the existence and uniqueness of $\Pr_\phi^\sigma$ in~\cite[Theorem 3.5]{Razborov07}, and further that the sequence of probability distributions $({\Pr_n^\sigma})_{n \in \Nat}$ converges weakly to $\Pr_\phi^\sigma$ as $n \rightarrow \infty$ in~\cite[Theorem 3.12]{Razborov07}.

\subsection{Tight paths}\label{subsection: tightpaths}

\begin{figure}
\hfill\foreach \n in {12,13,14}{ \includegraphics[scale=0.70,page=\n]{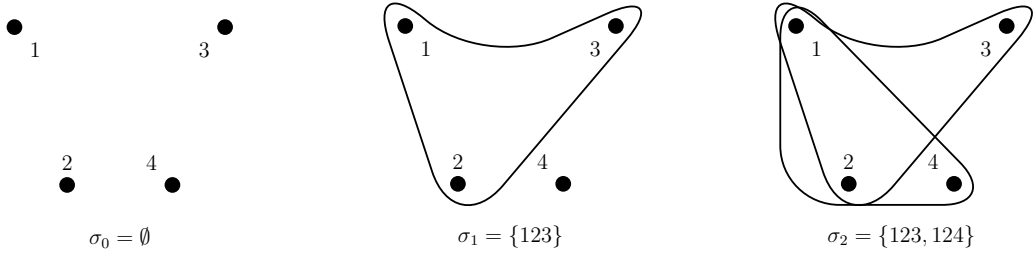}\hfill}

\caption{The $4$-vertex types $\sigma_0,\sigma_1$ and $\sigma_2$.}

\label{fig:sigmatypes}
\end{figure}
A crucial step in our proof of the stability result stated in Theorem~\ref{theorem: stability} will be constructing an auxiliary orientation of almost all pairs of the vertices of a $K_4^-$-free $3$-graph $H$ with large codegree.
In order to do so, we shall use a relation on the vertex-pairs of $H$ associated with what is known as \emph{tight connectivity}.

\begin{definition}\label{def: tightly connected}
We say that a pair of vertices $\{a,b\}$ is \emph{tightly connected} to a pair $\{c,d\}$ by a path of length $\ell$ if there exists a sequence of distinct $\ell+2$ vertices $v_1,v_2,\dots,v_{\ell+2}$ and $\ell$ edges $e_1,e_2,\dots,e_\ell$ such that $e_i = \{v_i,v_{i+1},v_{i+2}\}$ for every $i \in [\ell]$, $\{a,b\} \subseteq e_1$ and $\{c,d\} \subseteq e_\ell$.
\end{definition}
\noindent Having a fixed orientation of a given pair of vertices $\{a,b\}$ and a tight path $P$ of length $\ell$ connecting the pair $\{a,b\}$ to a pair $\{c,d\}$ allows us to propagate the orientation of $\{a,b\}$ to $\{c,d\}$ in a rather natural way:
there is a unique orientation of the remaining two pairs contained in the first edge $e_1$ of $P$ so that the three pairs in $e_1$ form a cyclically oriented triangle, which fixes the orientation of one pair contained in the next edge of $P$. Repeating the procedure $\ell$ times along the edges of $P$ yields an orientation for the pair $\{c,d\}$.

Let us now introduce a flag algebra notion, which we will use to capture tight connectivity in the proof of Theorem~\ref{theorem: stability}.
Fix $\sigma_0,\sigma_1$ and $\sigma_2$ to be the $4$-vertex types depicted in Figure~\ref{fig:sigmatypes}.
Note that, up to labelling of the vertices, $\sigma_0,\sigma_1$ and $\sigma_2$ are the only $K_4^-$-free $4$-vertex types. Let $\cP^{\sigma_0}_3$, $\cP^{\sigma_1}_3$ and $\cP^{\sigma_2}_3$ be the sets of $5$-vertex $\sigma_0$-, $\sigma_1$- and $\sigma_2$-flags depicted in  Figure~\ref{fig:pathS0}, Figure~\ref{fig:pathS1} and Figure~\ref{fig:pathS2} respectively.  A straightforward inspection of these flags yields the following.
\begin{observation}\label{observation: tight paths}
For any $i \in \{0,1,2\}$ and any $F^{\sigma_i} \in \cP^{\sigma_i}_3$, the pair $\{1,2\}$ is tightly connected to the pair $\{3,4\}$ in $F^{\sigma_i}$ by a path of length $3$. 
\end{observation}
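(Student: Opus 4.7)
The claim is essentially a matter of inspection: the sets $\cP^{\sigma_i}_3$ are, by construction, lists of the $5$-vertex $\sigma_i$-flags that admit a tight path of length $3$ from the labelled pair $\{1,2\}$ to the labelled pair $\{3,4\}$, so the plan is to verify this directly for each flag in each of the three figures. Before doing the case analysis, I would note two structural observations that constrain the search considerably.

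First, a tight path of length~$3$ consists of three edges on $5$ distinct vertices $v_1,v_2,v_3,v_4,v_5$ with $e_i=\{v_i,v_{i+1},v_{i+2}\}$ for $i\in[3]$. Since the flags in $\cP^{\sigma_i}_3$ have exactly $5$ vertices, the sequence $v_1,\ldots,v_5$ must be a permutation of $V(F^{\sigma_i})$. The requirements $\{1,2\}\subseteq e_1$ and $\{3,4\}\subseteq e_3$ force $\{1,2\}\subseteq\{v_1,v_2,v_3\}$ and $\{3,4\}\subseteq\{v_3,v_4,v_5\}$, so if $u$ denotes the unique unlabelled (fifth) vertex of the flag, then $v_3$ must lie in $\{1,2\}\cap\{3,4,u\}$ only through $u$, meaning either $v_3=u$ (and then $\{v_1,v_2\}=\{1,2\}$, $\{v_4,v_5\}=\{3,4\}$), or $v_3\in\{3,4\}$ with $u\in\{v_1,v_2\}$, or $v_3\in\{1,2\}$ with $u\in\{v_4,v_5\}$. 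This leaves only a small finite list of candidate vertex sequences per flag.

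Second, for each candidate sequence one checks whether the three triples $\{v_1,v_2,v_3\}$, $\{v_2,v_3,v_4\}$, $\{v_3,v_4,v_5\}$ are all edges of $F^{\sigma_i}$. For the flags in $\cP^{\sigma_0}_3$, $\cP^{\sigma_1}_3$, $\cP^{\sigma_2}_3$ this is a routine verification using the pictures in Figures~\ref{fig:pathS0}--\ref{fig:pathS2}: for each listed flag one exhibits one such permutation that works. I would present this as a table (one row per flag, indicating the winning sequence $v_1,\ldots,v_5$) rather than prose, since each individual check is trivial.

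The only potential obstacle is purely bookkeeping: one must be careful to enumerate every flag that appears in the three figures and not to miss any edge present in the flag that could serve as $e_2$ or $e_3$. Since the flags in $\cP^{\sigma_i}_3$ were chosen precisely to be those admitting such a path, no real mathematical difficulty arises, and the observation follows from the finite case check.
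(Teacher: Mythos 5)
Your approach is the same as the paper's: the paper proves this observation purely by inspection of the flags in Figures~\ref{fig:pathS0}--\ref{fig:pathS2}, and your plan of exhibiting, for each of the finitely many five-vertex flags, a permutation $v_1,\dots,v_5$ whose three consecutive triples are edges is exactly that finite check. One correction to your auxiliary case split, though: if $v_3\in\{3,4\}$ then $\{1,2\}\subseteq\{v_1,v_2,v_3\}$ forces $\{v_1,v_2\}=\{1,2\}$ and hence $u\in\{v_4,v_5\}$, and symmetrically $v_3\in\{1,2\}$ forces $\{v_4,v_5\}=\{3,4\}$ and $u\in\{v_1,v_2\}$; as written you have the location of $u$ swapped in these two cases, so both are vacuous and your candidate list reduces to the single case $v_3=u$. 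That case does suffice for the $\sigma_0$-flags (since $\sigma_0$ spans no edge, every edge of such a flag contains the unlabelled vertex, so a tight path must have $u=v_3$), but for the $\sigma_1$- and $\sigma_2$-flags a path may use a dashed labelled edge as $e_1$ or $e_3$, placing $u$ at an end of the sequence, and restricting to $u=v_3$ could miss the only valid paths. This is a bookkeeping slip rather than a flaw in the method: with the corrected (still tiny) candidate list, the inspection goes through exactly as in the paper.
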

\begin{figure}
\hfill\foreach \n in {1,2,3}{ \includegraphics[scale=0.70,page=\n]{fig}\hfill}\\
\hfill\foreach \n in {4,5,6}{ \includegraphics[scale=0.70,page=\n]{fig}\hfill}
\caption{The set of $\sigma_0$-flags $\cP^{\sigma_0}_3$.}
\label{fig:pathS0}
\end{figure}

\begin{figure}

\hfill\foreach \n in {7,8}{ \includegraphics[scale=0.70,page=\n]{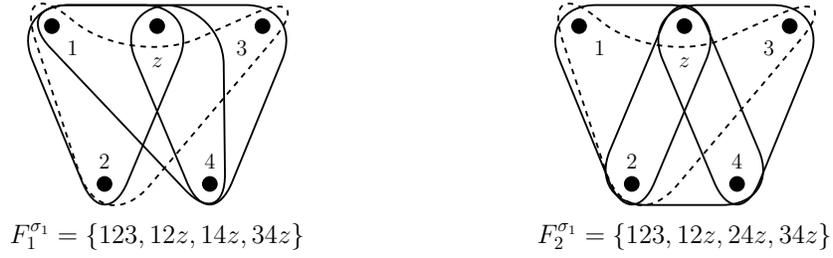}\hfill}
\caption{The set of $\sigma_1$-flags $\cP^{\sigma_1}_3$. The edge induced by the labelled vertices $1,2,3,4$ is represented by a dashed curve.}
\label{fig:pathS1}
\end{figure}

\begin{figure}

\hfill\foreach \n in {9,10}{ \includegraphics[scale=0.70,page=\n]{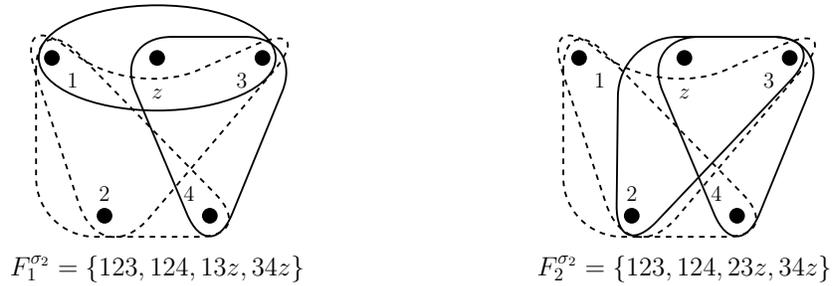}\hfill}
\caption{The set of $\sigma_2$-flags $\cP^{\sigma_2}_3$. The two edges induced by the labelled vertices $1,2,3,4$ are represented by dashed curves.}

\label{fig:pathS2}

\end{figure}

For $i \in \{0,1,2\}$, we define $\cP_i \in \cA$ to be the following non-labelled expression:
\[\cP_i := \unlab{\left( 16 \cdot \sum\limits_{F \in \cP^{\sigma_i}_3} F-\sum\limits_{F \in \cF^{\sigma_i}_5} F \right)^2}{\sigma_i}=\unlab{\left( 15 \cdot \sum\limits_{F \in \cP^{\sigma_i}_3} F \, -\sum\limits_{F \in \cF^{\sigma_i}_5 \setminus \cP^{\sigma_i}_3} F \right)^2}{\sigma_i}\;.\]
Note that $\cP_0$, $\cP_1$ and $\cP_2$ can be expressed as a linear combination of the elements of $\cF_6$. We now relate the expressions $\cP_i$ to the existence of (short) tight paths.

\begin{proposition}\label{proposition: tightly connected quadruples}
Let $(H_n)_{n\in\Nat}$ be a convergent sequence of $K_4^-$-free $3$-graphs and let $\phi \in \Hom^+(\cA,\Real)$ be its limit. If $\phi\left(\cP_0\right) = \phi\left(\cP_1\right) = \phi\left(\cP_2\right) = 0$ and $n\in\Nat$, then for all but $o\left(v(H_n)^4\right)$ quadruples of vertices $a,b,c,d \in V(H_n)$ the pair $\{a,b\}$ is tightly connected to $\{c,d\}$ by a path of length at most $3$.
\end{proposition}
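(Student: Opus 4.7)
The plan is to extract from the vanishing $\phi(\cP_i) = 0$ a strong density statement on random extensions of $\sigma_i$, using the averaging relation and multiplicativity of homomorphisms; translate this to tight paths via Observation~\ref{observation: tight paths}; and finally handle every ordered quadruple via a case analysis on the labeled 4-vertex type it induces.

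\textbf{Flag-algebraic extraction.} Fix $i \in \{0,1,2\}$ with $\phi(\unlab{\sigma_i}{\sigma_i}) > 0$ (the case $\phi(\unlab{\sigma_i}{\sigma_i}) = 0$ contributes only $o(n^4)$ quadruples, which can safely be absorbed into the bad count). Set
\[
g_i := 16\sum_{F \in \cP^{\sigma_i}_3} F \;-\; \sum_{F \in \cF^{\sigma_i}_5} F \;\in\; \cA^{\sigma_i},
\]
so that $\cP_i = \unlab{g_i^2}{\sigma_i}$. Equation~\eqref{eq:hom_avg} applied to $g_i^2$ together with $\phi(\cP_i) = 0$ gives $\E_{\psi \sim \Pr_\phi^{\sigma_i}} \psi(g_i^2) = 0$; multiplicativity of the homomorphism $\psi$ gives $\psi(g_i^2) = \psi(g_i)^2 \ge 0$, so $\psi(g_i) = 0$ holds $\Pr_\phi^{\sigma_i}$-almost surely. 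Using $\psi\bigl(\sum_{F \in \cF^{\sigma_i}_5} F\bigr) = 1$ we deduce $\psi\bigl(\sum_{F \in \cP^{\sigma_i}_3} F\bigr) = \tfrac{1}{16}$ almost surely. By the weak convergence $\Pr_n^{\sigma_i} \to \Pr_\phi^{\sigma_i}$ from~\cite[Theorem~3.12]{Razborov07} applied to the continuous functional $\psi \mapsto \psi\bigl(\sum_{F \in \cP^{\sigma_i}_3} F\bigr)$, for $n$ sufficiently large all but an $o(1)$-fraction of embeddings $\iota \colon [4] \hookrightarrow V(H_n)$ of $\sigma_i$ in $H_n$ satisfy $\sum_{F \in \cP^{\sigma_i}_3} p(F, H_n^\iota) \ge \tfrac{1}{32}$. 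Hence some vertex $v \in V(H_n) \setminus \iota([4])$ extends $\iota$ to a flag in $\cP^{\sigma_i}_3$, and Observation~\ref{observation: tight paths} supplies a tight path of length $3$ in $H_n$ from $\iota(\{1,2\})$ to $\iota(\{3,4\})$.

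\textbf{Case analysis on the induced type.} Let $(a,b,c,d)$ be an ordered quadruple of distinct vertices in $H_n$, and label $a,b,c,d$ as $1,2,3,4$; the induced labeled 3-graph $\tau$ is one of the (finitely many) labeled $K_4^-$-free 4-vertex types. If $\tau$ has two edges whose shared pair is \emph{crossing}, i.e., one of $\{1,3\},\{1,4\},\{2,3\},\{2,4\}$, then those two edges themselves form a tight path of length $2$ from $\{1,2\}$ to $\{3,4\}$, and tight connection holds already within $\{a,b,c,d\}$. Otherwise, $\tau$ is pair-preserving isomorphic to one of $\sigma_0,\sigma_1,\sigma_2$, i.e., there exists a permutation $\pi \in S_4$ stabilizing the partition $\{\{1,2\},\{3,4\}\}$ such that $\pi \cdot \tau = \sigma_i$; applying $\pi$ to $(a,b,c,d)$ gives an embedding $\iota$ of $\sigma_i$ in $H_n$ with $\{\{\iota(1),\iota(2)\}, \{\iota(3),\iota(4)\}\} = \{\{a,b\},\{c,d\}\}$, and the previous step yields a tight path of length $3$ from $\{a,b\}$ to $\{c,d\}$ for all but $o(n^4)$ such quadruples. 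Summing over the finitely many values of $i$ and permutations $\pi$ gives $o(n^4)$ bad quadruples overall.

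\textbf{Main obstacle.} The only non-routine point is the combinatorial bookkeeping of the case split: the pictured types $\sigma_0, \sigma_1, \sigma_2$, together with the ``trivial'' crossing-pair case, must cover every orbit of labeled $K_4^-$-free 4-vertex type under the action of the pair-preserving subgroup $\langle (1\,2), (3\,4), (1\,3)(2\,4) \rangle \le S_4$. In particular the two-edge type $\sigma_2$ must represent an orbit of parallel-share labelings (shared pair $\{1,2\}$ or $\{3,4\}$), since the crossing-share orbits are already handled within the quadruple by a length-$2$ tight path. The remaining ingredients---the multiplicativity trick yielding $\psi(g_i)=0$ from $\psi(g_i^2)=0$, and the Portmanteau step turning weak convergence into a uniform bound on the number of bad embeddings---are entirely standard within the flag algebra framework.
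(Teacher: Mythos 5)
Your proposal is correct and follows essentially the same route as the paper: it extracts from $\phi(\cP_i)=0$, via the averaging relation~\eqref{eq:hom_avg} and weak convergence of $\Pr_n^{\sigma_i}$ to $\Pr_\phi^{\sigma_i}$, that almost every embedding of $\sigma_i$ has positive density of extensions into $\cP_3^{\sigma_i}$ (hence a length-$3$ tight path by Observation~\ref{observation: tight paths}), and then splits quadruples according to the induced $K_4^-$-free configuration, with the two-edge ``crossing'' case handled directly by a length-$2$ path, exactly as in the paper's case 4). The only (immaterial) difference is that you take the limit first and apply multiplicativity of the limit homomorphisms plus a Portmanteau argument, whereas the paper argues by contradiction at the finite level using approximate multiplicativity of the $\psi_{n_k}$.
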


\begin{proof}
Let $a,b,c,d\in V(H_n)$ be a quadruple of distinct vertices and $E'$ the set of edges in $H_n$ the quadruple induces. Since $H_n$ is $K_4^-$-free, there are, up to symmetry, exactly four possibilities:
\[\mbox{1) } E'=\emptyset   \,,
\quad\quad \mbox{2) } E'=\{abc\}     \,,
\quad\quad \mbox{3) } E'=\{abc,abd\} \,,\quad\mbox{ and } \quad \mbox{4) } E'=\{abc,bcd\} \,.
\]
In the last case, the edges $abc$ and $bcd$ already form a tight path of length $2$ between the pairs $\{a,b\}$ and $\{c,d\}$.

For the remaining three cases, our aim will be to find a suitable additional vertex $x \in V(H_n)$ such that the subgraph of $H_n$ induced by $\{a,b,c,d,x\}$ contains a tight path from $\{a,b\}$ to $\{c,d\}$ of length $3$. Based on $E'$, we break down the analysis into Claims~\ref{cl:1}, \ref{cl:2} and \ref{cl:3}. As each case can be treated in a very similar manner, we only deal with the case $E'=\emptyset$ in detail.

\begin{claim} \label{cl:1}
There are $o\left({v(H_n)}^4\right)$ choices of $a,b,c,d$ such that $\{a,b,c,d\}$ induces no edge and $\{a,b\}$ is not tightly connected to $\{c,d\}$ by a path of length $3$.
\end{claim}

\begin{proof}

Without loss of generality, we may assume $\phi\left(\unlab{\sigma_0}{\sigma_0}\right) > 0$ as otherwise the total number of choices of $\{a,b,c,d\}$ inducing no edge in $H_n$  is $o\left({v(H_n)}^4\right)$. Let ${\Pr}^{\sigma_0}_n$ be the probability distribution on mappings from $\Real \cF^{\sigma_0}$ to $\Real$ that arise from picking a copy of $\sigma_0$ in $H_n$ uniformly at random, i.e.,  choosing  a quadruple of vertices $Q$ uniformly at random conditioned on the event that $Q$ induces no edges in $H_n$.

Let $\psi_n$ be drawn according to the probability distribution ${\Pr}_n^{\sigma_0}$, i.e., fix a copy of $\sigma_0$ in $H_n$ uniformly at random, and let $a$, $b$, $c$ and $d$ be the vertices of $H_n$ corresponding to the $\sigma_0$ vertices $1$, $2$, $3$ and $4$, respectively. Recall that $\psi_n(F)$, where $F \in \cF_5^{\sigma_0}$, corresponds to the probability that a random vertex $x \in V(H_n) \setminus \{a,b,c,d\}$ extends $\{a,b,c,d\}$ into a $\sigma_0$-flag isomorphic to $F$. In~particular, $\sum_{F \in \cF_5^{\sigma_0}} \psi_n(F) = 1$.

Clearly, the pair $\{1,2\}$ is tightly connected to $\{3,4\}$ by a path of length $3$ in any $F \in \cP^{\sigma_0}_3$. In fact, a straightforward inspection reveals that the set $\cP^{\sigma_0}_3$ describes all the possibilities for such a path in a $K_4^-$-free setting. Therefore, the number of tight paths of length $3$ between $\{a,b\}$ and $\{c,d\}$ in $H_n$ is equal to $(v(H_n)-4) \cdot \sum_{F \in \cP_3^{\sigma_0}} \psi_n(F)$. Motivated by Construction~\ref{construction: C(T)}, we aim to prove the claim by showing that this expression is equal to $(1-o(1)) \cdot v(H_n)/16$ with probability $1-o(1)$.

Suppose to the contrary that there is $\eps_0 > 0$ and an infinite subsequence $(n_k)_{k\in \mathbb{N}}$ of $\mathbb{N}$ such that for every $k\in \mathbb{N}$ there are (at least) $\eps_0 \cdot v(H_{n_k})^4$ copies of $\sigma_0$ in $H_{n_k}$ such that the corresponding $\psi_{n_k}$ satisfies the two following equivalent inequalities
\[\left| \sum_{F \in \cP^{\sigma_0}_3} \psi_{n_k}(F) - \frac1{16} \right| \ge \eps_0
\quad \iff \quad 
\left( \sum_{F \in \cP^{\sigma_0}_3} \psi_{n_k}(F) - \frac1{16} \cdot \sum_{F \in \cF^{\sigma_0}_5} \psi_{n_k}(F) \right)^2 \ge \left(\eps_0\right)^2\;.\]
Fix such a $\psi_{n_k}$. The definition of the flag-algebra multiplication on $\cF^{\sigma_0}$ in turn yields that
\[\psi_{n_k}\left(\left( \sum_{F \in \cP^{\sigma_0}_3} F - \sum_{F \in \cF^{\sigma_0}_5} \frac{1}{16} \cdot F\right)^2 \right)\ge \left(\eps_0\right)^2 - o(1) \;.\]
As $\left({\Pr}^{\sigma_0}_{n_k}\right)_{k \in \Nat}$ weakly converges to the probability distribution ${\Pr}_\phi^{\sigma_0}$ associated with $\phi$, we have
\[\lim\limits_{k\to\infty} \E\limits_{\psi_{n_k} \sim {\Pr}_{n_k}^{\sigma_0}}\left[ \psi_{n_k}\left(\left( \sum_{F \in \cP^{\sigma_0}_3} F - \sum_{F \in \cF^{\sigma_0}_5} \frac{1}{16} \cdot F\right)^2 \right) \right]\,=\,
\E\limits_{\psi \sim {\Pr}_\phi^{\sigma_0}} \left[ \psi\left( \left( \sum\limits_{F \in \cP^{\sigma_0}_3} F-\sum\limits_{F \in \cF^{\sigma_0}_5} \frac1{16}\cdot F \right)^2 \right) \right] \;.
\]
Clearly, the limit on the left-hand side must be at least $\left(\eps_0\right)^3$, however, the expectation on the right-hand side is by~\eqref{eq:hom_avg} equal to $\frac1{256}\cdot\frac{\phi\left(\cP_0\right)}{\phi\left(\unlab{\sigma_0}{\sigma_0}\right)} = 0$, a contradiction.
\end{proof}

As ${\phi\left(\cP_1\right)}={\phi\left(\cP_2\right)}=0$, an analogous argument for embeddings of $\sigma_1$ and $\sigma_2$ in $(H_n)_{n \in \Nat}$ using the respective probability distributions $\left({\Pr}^{\sigma_1}_n\right) \to {\Pr}_\phi^{\sigma_1}$ and $\left({\Pr}^{\sigma_2}_n\right) \to {\Pr}_\phi^{\sigma_2}$ yields the following.

\begin{claim} \label{cl:2}
There are $o\left({v(H_n)}^4\right)$ choices of $a,b,c,d$ such that $\{a,b,c,d\}$ induces one edge $abc$ and $\{a,b\}$ is not tightly connected to $\{c,d\}$ by a path of length $3$.
\end{claim}

\begin{claim} \label{cl:3}
There are $o\left({v(H_n)}^4\right)$ choices of $a,b,c,d$ such that $\{a,b,c,d\}$ induces edges $\{abc,abd\}$ and $\{a,b\}$ is not tightly connected to $\{c,d\}$ by a path of length $3$.
\end{claim}
This concludes the proof of Proposition~\ref{proposition: tightly connected quadruples}.
\end{proof}

\subsection{Proof of $\pi_2(K_4^-) = 1/4$}\label{subsection: codegree density}
We shall prove Theorem~\ref{theorem: codegree density} by establishing a specific identity between subgraph densities of $7$-vertex $K_4^-$-free $3$-graphs that holds for any $\phi \in \Hom^+(\cA,\Real)$. This identity is obtained by an application of the semi-definite method of Razborov and some computer-aided flag algebra calculations. Before we can state this identity, we need to introduce a few more definitions.

For each $n\in \Nat$, let $T_n$ be a uniformly random tournament on $[n]$. It is straightforward to check that the sequence  $(C(T_n))_{n\in\Nat}$ is an almost surely convergent sequence of $3$-graphs.
Indeed, as $n\rightarrow \infty$, the density of any fixed $F$ in $C(T_n)$ almost surely converges to the probability that $F=C(T)$ when $T$ is a random tournament on $V(F)$ in which pairs are oriented randomly and independently of each other.
Let $\phi_T$ denote the limit of $(C(T_n))_{n\in \Nat}$ and let  \[\cE_T:=\{ F \in \cF: \phi_T(F) > 0\}.\]
We call the elements of $\cE_T$ \emph{tournament-realizable}.
The definition of $\cE_T$ readily yields the following.
\begin{observation}\label{observation: F in cE_T of phiT iff F is a tournament construction}
$F \in \cE_T$ if and only if there is a tournament $T_F$ on $V(F)$ such that $F=C(T_F)$.
\end{observation}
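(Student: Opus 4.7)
The approach is to unfold the characterization of $\phi_T(F)$ given in the sentence immediately preceding the observation: $\phi_T(F)$ equals the probability that $C(T) \cong F$ when $T$ is a uniformly random tournament on any fixed set of $v(F)$ vertices. This holds because sampling $v(F)$ random vertices of $T_n$ yields, asymptotically, a uniformly distributed tournament on that subset, and the induced $3$-graph of $C(T_n)$ on those vertices is exactly $C$ of that induced sub-tournament. Once this identification is in hand, both implications of the equivalence reduce to short finiteness arguments.

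For the \emph{if} direction, I would argue as follows. Suppose $F = C(T_F)$ for some tournament $T_F$ on $V(F)$. A uniformly random tournament $T$ on $V(F)$ coincides with $T_F$ with probability exactly $2^{-\binom{v(F)}{2}} > 0$; on this event $C(T) = F$, and in particular $C(T) \cong F$. Hence $\phi_T(F) \geq 2^{-\binom{v(F)}{2}} > 0$, so $F \in \cE_T$.

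For the \emph{only if} direction, suppose $F \in \cE_T$, i.e.\ $\phi_T(F) > 0$. Then $\Pr[C(T) \cong F] > 0$ for a uniformly random tournament $T$ on a fixed $v(F)$-element vertex set; since this sample space is finite, there must exist at least one concrete tournament $T^{\ast}$ with $C(T^{\ast}) \cong F$ as unlabelled $3$-graphs. I would then fix an isomorphism $\pi : V(C(T^{\ast})) \to V(F)$ and transport $T^{\ast}$ along $\pi$ to produce a tournament $T_F$ on $V(F)$ by orienting $\{\pi(x),\pi(y)\}$ as $\overrightarrow{\pi(x)\pi(y)}$ whenever $\overrightarrow{xy} \in E(T^{\ast})$. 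Cyclic orientation of a triangle is preserved under relabelling of vertices, so the operator $C(\cdot)$ commutes with $\pi$; combined with the isomorphism $E(C(T^{\ast})) \leftrightarrow E(F)$, this gives $C(T_F) = F$ on the nose.

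There is no real obstacle in this proof; the only point worth noting is that the construction of $T_F$ must transport a relabelling-invariant structure (cyclic orientation) through the $3$-graph isomorphism, which is immediate. The observation is essentially a bookkeeping statement that recasts $\cE_T$, defined via the flag algebra limit, as the combinatorially concrete set of $3$-graphs realizable as $C(T_F)$ for some labelled tournament $T_F$ on their vertex set.
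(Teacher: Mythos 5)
Your proof is correct and follows the only natural route, which is exactly what the paper intends: the paper offers no written proof, asserting that the observation follows readily from the definitions of $\phi_T$ and $\cE_T$, and your argument simply unpacks that assertion (positivity of $\Pr[C(T)\cong F]$ over the finite space of tournaments on a $v(F)$-set, plus transporting a witness along an isomorphism to get $C(T_F)=F$ on $V(F)$ itself). No gaps; nothing further is needed.
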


\begin{figure}
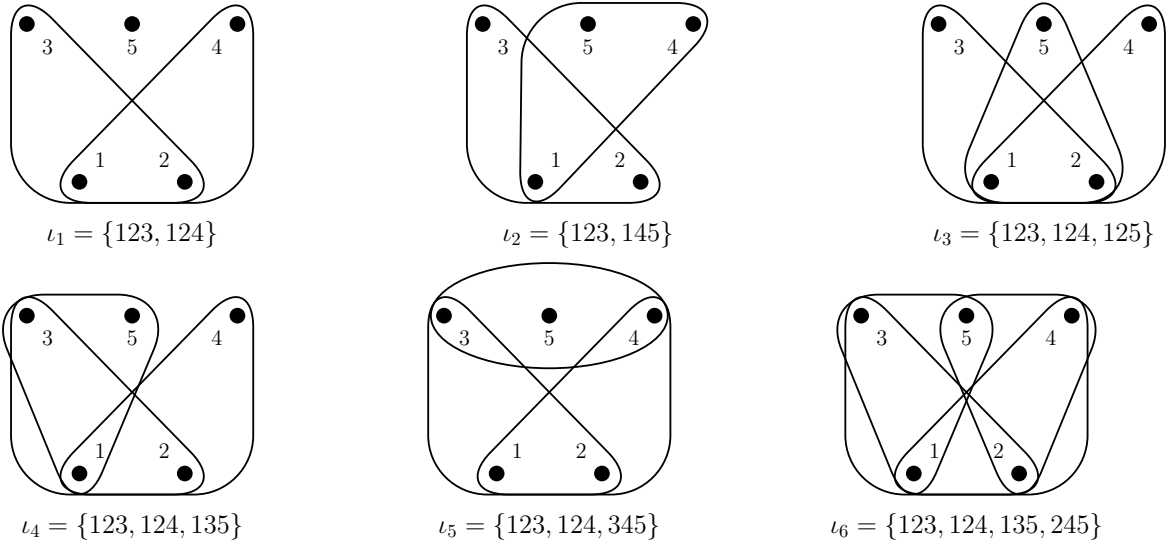

\foreach \n in {16,15,19}{ \includegraphics[scale=0.70,page=\n]{fig}\hfill}\\
\foreach \n in {18,17,20}{ \includegraphics[scale=0.70,page=\n]{fig}\hfill}
\caption{The $5$-vertex types $\iota_i$ for $i\in[6]$ used in the proof of Lemma~\ref{lemma: flag}.}

\label{fig:iotatypes}
\end{figure}
We now continue by introducing the remaining flag algebra notation.
Let $\cG := \cF \setminus \cE_T$ be the set of $K_4^-$-free $3$-graphs that are not tournament-realizable. For every $k \in \Nat$, we set $\cG_k := \cG \cap \cF_k$.
Let $\tau$ be the unique $2$-vertex type consisting of two labelled vertices $1$ and $2$.
Let $\mathrm N$ and $\mathrm E$ be the $3$-vertex $3$-graphs with $0$ and $1$ edge respectively, i.e.\  the (unlabelled) \emph{Non-edge} and \emph{Edge}, and similarly ${\mathrm N}^\tau$ and ${\mathrm E}^\tau$ the unique $3$-vertex $\tau$-flags with $0$ and $1$ edge respectively. A codegree assumption $\delta_2(H) \geq (c-o(1)) \cdot v(H)$ implies in the flag algebra language the statement that $\phi^{\tau} \left({\mathrm E}^{\tau}\right)\geq c$ with probability $1$, where  $\phi^{\tau}$ is drawn from ${\Pr}_\phi^\tau$. Equivalently, $\phi^\tau\left(\frac{1-c}{c} \cdot {\mathrm E}^{\tau} - {\mathrm N}^{\tau}\right) \geq 0$ with probability $1$.

 Let $\iota_1,\iota_2,\dots, \iota_6$ be the six $5$-vertex types depicted in Figure~\ref{fig:iotatypes}. For every $i \in [6]$, we let $k_i := \big|\cF^{\iota_i}_6\big|$, and fix an arbitrary enumeration $F^{\iota_i}_1, F^{\iota_i}_2,\dots, F^{\iota_i}_{k_i}$ of the elements of $\cF^{\iota_i}_6$. Note the particular enumeration that we will be using in the whole paper is the one used in our ancillary computer programs. Similarly, we fix an enumeration of $\cF^{\sigma_i}_5$ for $i \in \{0,1,2\}$.

For $i \in [6]$, we let $e_i$ be the $k_i$-dimensional vector from $\left(\cA^{\iota_i}\right)^{k_i}$ whose $j$-th coordinate is equal to $F^{\iota_i}_j$. By a straightforward computer search, we have $k_1 = 191$, $k_2 = 173$, $k_3 = 148$, $k_4 = 135$, $k_5 = 124$ and $k_6 = 95$ (this can be also checked using Flagmatic, see e.g.~\cite{FalgasRavryVaughan13,Vaughan13}).

Next, it is easy to check that $\phi_T\left(\unlab{\iota_2}{\iota_2}\right) = \phi_T\left(\unlab{\iota_5}{\iota_5}\right) = 0$. On the other hand, for $i \in \{1,3,4,6\}$ we have $\phi_T\left(\unlab{\iota_i}{\iota_i}\right) > 0$, and so the probability distribution ${\Pr}^{\iota_i}_{\phi_T}$ on $\Hom^+(\cA^{\iota_i}, \Real)$ is well-defined.
Note that in this case ${\Pr}^{\iota_i}_{\phi_T}$ is a purely atomic measure, and $\left|\supp\left({\Pr}^{\iota_i}_{\phi_T}\right)\right| \le 2^{10}$ (since the right-hand side is equal to the number of $3$-graphs on a labelled set of $5$ vertices).

For any proof of $\pi_2\left(K_4^-\right) \leq  1/4$, the support of ${\Pr}^{\iota_i}_{\phi_T}$ restricts arguments used in the proof that involve $\iota_i$-flags. In particular, if such a proof uses $\unlab{f^2}{\iota_i} \ge 0$ for some $f \in \cA^{\iota_i}$, then clearly we must have $\psi^{\iota_i}(f) = 0$ for all $\psi^{\iota_i} \in \supp\left({\Pr}^{\iota_i}_{\phi_T}\right)$.

Recall that $F^{\iota_i}_1, F^{\iota_i}_2, \dots, F^{\iota_i}_{k_i}$ is the fixed enumeration of $\cF^{\iota_i}_6$. We set $d_i$ to be the dimension of the subspace of $\Real^{k_i}$ generated by
\[\left\{ x \in \Real^{k_i} \cond \ \exists \psi^{\iota_i} \in \supp\left({\Pr}^{\iota_i}_{\phi_T}\right) \mbox{ such that }x_j = \psi^{\iota_i}\left(F^{\iota_i}_j\right) \mbox{ for all }j\in [k_i] \right\} \;.\]
By inspection of the tournaments whose cyclically oriented triangles give rise to a given type $\iota_i$ under the Erd{\H o}s--Hajnal construction, we have that $d_2=d_5=0$, $d_4=d_6=1$, $d_1=4$ and $d_3=6$.
Indeed, up to reversing the orientation of all the edges there are exactly $4$, $0$, $6$, $1$, $0$ and $1$ distinct tournaments on $[5]$ realising the types $\iota_1$, $\iota_2$, $\iota_3$, $\iota_4$, $\iota_5$ and $\iota_6$ respectively.
Each such a tournament $J$ corresponds to a different kind of embedding of $\iota_j$ in $\phi_T$, which we denote by $H^J$, and determines a different probability distribution on $\cF^{\iota_j}_{6}$ given by extending $H^J$ by a single random vertex of $\phi_T$.
By the definition of $\phi_T$, this is in a one-to-one correspondence with adding a new vertex $x$ to $J$ that has all its edges to $V(J)$ oriented randomly, and considering the random variable $C(J \cup \{x\})$.
Moreover, when viewing each such a probability distribution as a vector in $\Real\cF^{\iota_j}_{6}$, for every $j \in [6]$ the set of the $d_j$ vectors corresponding to all such distributions on $\cF^{\iota_j}_{6}$ is linearly independent.

Finally, yet another straightforward computer search yields $\left|\cF_6^\tau\right| = 1643$.
Moreover, exactly $167$ elements $F\in \cF_6^\tau$ are such that swapping the labels of $\tau$ (i.e., relabelling the vertex $1$ to $2$ and vice versa) yields a $\tau$-flag isomorphic to $F$.
Now observe that the expression $\unlab{F \times \left(3{\mathrm E}^{\tau} - {\mathrm N}^{\tau}\right)}{\tau}$ is invariant under such a swap of the labels of $\tau$ for every $F \in \cF^\tau$.
Therefore, there are exactly $(1643+167)/2=905$ pairwise different expressions of the form
$\unlab{F \times \left(3{\mathrm E}^{\tau} - {\mathrm N}^{\tau}\right)}{\tau}$ where $F\in \cF^\tau_6$.

Let $\cD := \big\{ \unlab{F \times \left(3{\mathrm E}^{\tau} - {\mathrm N}^{\tau}\right)}{\tau} | F \in \cF^\tau_6 \big\}$.
The discussion from the previous paragraph yields that $|\cD| = 905$.
Also note that $\phi(D) \ge 0$ for every $D\in \cD$ where $\phi$ is the limit of a sequence of $K_4^-$-free $3$-graphs $(H_n)_{n \in \Nat}$ with $\liminf_{n\to\infty} \frac{\delta_2(H_n)}{v(H_n)} \ge \frac14$, and
every $D \in \cD$ can be expressed as a linear combination of the elements of $\cF_7$.

We are now ready to state the main lemma of this section.
\begin{lemma}
\label{lemma: flag}
There exist
\begin{enumerate}
\item positive rationals $c_0, c_1$, $c_2$, $w_G$ for every $G \in \cG_7$, and $u_D$ for every $D \in \cD$,

\item rational matrices $I_i$ of sizes $\left(k_i - d_i \right) \times k_i$ for $i \in [6]$, and

\item positive definite rational matrices $Q_i \succ 0$ of sizes $\left(k_i - d_i\right) \times \left(k_i - d_i\right)$ for $i \in [6]$,
\end{enumerate}
such that the following identity holds in the theory of $K_4^-$-free $3$-graphs:
\begin{align}\label{equality: flag identity}
{\mathrm N} - 3 {\mathrm E}\,=\, \underbrace{\displaystyle\sum\limits_{D \in \cD} u_D \cdot D}_{\mathrm{CODEGREE}}
\,+\, \underbrace{\sum\limits_{i\in\{0,1,2\}} c_i \cdot \cP_i}_{\mathrm{TIGHT-PATH}}
\,+\, \underbrace{\sum\limits_{i\in[6]} \unlab{e^T_i I^T_i Q_i I_i e_i}{\iota_i}}_{\mathrm{POSITIVE-DEFINITE}}
\,+\, \underbrace{\sum\limits_{G \in \cG_7} w_G \cdot G}_{\mathrm{SLACK}}\;.
\end{align}
\end{lemma}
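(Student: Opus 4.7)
The proof is a computer-aided certificate in Razborov's semidefinite flag-algebra framework. The plan is to reduce (\ref{equality: flag identity}) to an identity between linear combinations of elements of $\cF_7$, relax the search for coefficients to a semidefinite program (SDP), solve the SDP numerically, and round the output to exact rationals that can be verified by machine.

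First, I would expand every object appearing on the right-hand side into the basis $\cF_7$ using the quotient relations of the flag algebra. The left-hand side unfolds to $\sum_{G \in \cF_7}\bigl(p(\mathrm{N},G) - 3p(\mathrm{E},G)\bigr) G$. Each CODEGREE summand $\unlab{F \times (3\mathrm{E}^\tau - \mathrm{N}^\tau)}{\tau}$ with $F \in \cF_6^\tau$ expands via the definitions of flag multiplication and the averaging operator; each $\cP_i$ is already a linear combination over $\cF_6$ which lifts routinely to $\cF_7$; and each $\unlab{e_i^T I_i^T Q_i I_i e_i}{\iota_i}$ becomes a symmetric quadratic form whose coefficient on $G \in \cF_7$ is linear in the entries of $Q_i$, obtained by summing over the embeddings of $\iota_i$ in $G$. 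Matching the coefficient of every $G \in \cF_7$ then reduces (\ref{equality: flag identity}) to a finite linear system in the unknowns $u_D$, $c_i$, $w_G$, and $Q_i$ (with $I_i$ fixed).

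Second, I would fix the matrices $I_i$ in advance, using the tournament structure already described in the excerpt. For each $i \in [6]$, let $X_i \subseteq \Real^{k_i}$ be the $d_i$-dimensional subspace spanned by the distribution vectors on $\cF_6^{\iota_i}$ arising from the enumerated tournament realisations $H^J$ of $\iota_i$; choose $I_i$ to be any rational matrix whose $k_i - d_i$ rows span the orthogonal complement $X_i^\perp$. This guarantees that $I_i e_i$ annihilates every $\psi^{\iota_i} \in \supp(\Pr_{\phi_T}^{\iota_i})$, so the POSITIVE-DEFINITE term is forced to vanish on the Erd\H os--Hajnal limit $\phi_T$ for any choice of $Q_i \succ 0$. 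This is the structural input that keeps the decomposition tight for tournament-based extremal configurations.

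Third, the remaining unknowns $u_D \geq 0$, $c_i > 0$, $w_G \geq 0$, and $Q_i \succ 0$ satisfy a feasibility SDP: the matching equations are linear in these variables (with $Q_i$ entering only through the symmetric linear map $Q_i \mapsto I_i^T Q_i I_i$), and the constraints are conic. I would solve this SDP numerically, then round the approximate optimum to rationals by the now-standard techniques in the flag-algebra literature (diagonalise each $Q_i$, round eigenvalues and eigenvectors carefully, and if necessary add a tiny multiple of the identity to preserve $Q_i \succ 0$). Finally, I would verify that the candidate rational values satisfy (\ref{equality: flag identity}) exactly, by a symbolic expansion on each of the $\lvert\cF_7\rvert$ coefficients --- this is what the paper's \texttt{verify.sage} script is designed to do.

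The main obstacle is this last rounding-and-verification stage: producing $Q_i \succ 0$ over $\Rat$ while making every one of the $\lvert\cF_7\rvert$ coefficient equations hold exactly and every $u_D, w_G$ non-negative. Without the projection $I_i$ the SDP would lack a strictly feasible interior point, because tournament-based limits saturate the bound and contribute null directions through $\supp(\Pr_{\phi_T}^{\iota_i})$; numerical rounding would then almost certainly fail. With the projection $I_i$ built in, these null directions are quotiented out a priori, the reduced SDP admits a rational interior point, and the rounding recipe succeeds.
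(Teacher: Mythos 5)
Your proposal matches the paper's approach: the paper's proof of Lemma~\ref{lemma: flag} is precisely a computer-generated semidefinite certificate, with the matrices $I_i$ chosen (as you describe) to annihilate the $d_i$-dimensional span of the distribution vectors coming from tournament realisations of each $\iota_i$, and the rational data verified coefficient-by-coefficient over $\cF_7$ by the ancillary scripts described in the Appendix. Your account of why the projection step is needed for the rounding to succeed is consistent with the paper's discussion of $\supp\left({\Pr}^{\iota_i}_{\phi_T}\right)$ and of the complementary-slackness count of the $7910$ coordinates with positive slack.
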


\begin{proof}
This is a standard flag algebra computation in the theory of $K_4^-$-free $3$-graphs, which was performed with the aid of a computer.
Files containing the rational matrices $I_1, \dots, I_6$ and $Q_1, \dots,Q_6$, the rationals $(u_D)$, $c_0$, $c_1$ and $c_2$, enumerations of $\mathcal{F}_7$ and
 $\mathcal{F}_6^{\iota_i}$ as well as verifications of the positive definiteness of the matrices $Q_i$ and the flag algebraic identity claimed by Lemma~\ref{lemma: flag} can be found at~\href{\webpage}{\webpage} as well as on the arXiv version of this paper; see the Appendix for details.
\end{proof}

\noindent With Lemma~\ref{lemma: flag} in hand, the remainder of the proof of Theorem~\ref{theorem: codegree density} is straightforward.

\begin{proof}[Proof of Theorem~\ref{theorem: codegree density}]
Suppose for a contradiction that $\pi_2(K_4^-) > 1/4$. Then there  exist $\eps_0 > 0$ and a sequence of $K_4^-$-free $3$-graphs $(H_n)_{n\in \Nat}$ with $v(H_n)\rightarrow \infty$ such that 
\[\liminf_{n\rightarrow\infty} \frac{\delta_2(H_n)}{v(H_n)} \geq 1/4 + \eps_0.\]
By compactness, there exists a convergent  subsequence $(H'_{n})_{n\in \Nat}$ of $(H_n)_{n\in\Nat}$. Let $\phi_0$ denote its limit. A double-counting argument yields that $\phi_0({\mathrm E}) = \phi_0\left(\unlab{{\mathrm E}^\tau}\tau\right) \ge 1/4 + \eps_0$. Therefore,
\begin{align}\label{inequality: bound on N-3E}
\phi_0({\mathrm N}-3{\mathrm E}) = \phi_0({\mathrm N}+{\mathrm E)}-4\cdot\phi_0({\mathrm E}) \le 1- 4\cdot \left(\frac{1}{4}+\eps_0\right) = -4\eps_0 < 0\;.
\end{align}
On the other hand, our codegree assumption yields that almost surely  $\phi^{\tau}_0(3{\mathrm E}^\tau - {\mathrm N}^\tau) \ge 4\eps_0$, where $\phi^{\tau}_0$ is a random homomorphism drawn from $\Pr^\tau_{\phi_0}$.
Therefore, for any non-negative $\left(u_D\right)_{D \in \cD}$,
\[\phi_0\left( \displaystyle\sum\limits_{D \in \cD} u_D \cdot D \right) = \displaystyle\sum\limits_{D \in \cD} u_D \cdot \phi_0\left(D \right) \ge 0.\]
As the evaluation of the remaining three summands in \eqref{equality: flag identity} is non-negative for any $\phi \in \Hom^+(\cA,\Real)$, Lemma~\ref{lemma: flag} yields that $\phi_0({\mathrm N}-3{\mathrm E}) \ge 0$ contradicting (\ref{inequality: bound on N-3E}).
\end{proof}

\section{Stability of Construction~\ref{construction: C(T)}}\label{section: stability}
In order to relate $K_4^-$-free $3$-graphs to Construction~\ref{construction: C(T)}, we establish the following proposition:
\begin{proposition}\label{prop:tournament}
Let $H=(V,E)$ be an $n$-vertex $3$-graph such that:
\begin{enumerate}[(i)]
\item every $7$-vertex subgraph of $H$ is tournament-realizable,

\item for all but $o(n^4)$ choices of $a,b,c,d\in V$ the pair $\{a,b\}$ is tightly connected to $\{c,d\}$ by a path of length at most $3$, and

\item all but $o(n^2)$ pairs of vertices of $H$ have codegree at least $n/4 - o(n)$.
\end{enumerate}
Then there is an oriented graph $\vec{G}$ on the vertex set $V$ such that:
\begin{enumerate}[(1)]

\item $\vec{G}$ has at least $n^2/2 - o(n^2)$ arcs,

\item if $u,v,w \in V$ span a transitive triangle in $\vec{G}$ then $uvw\notin E$, and

\item for all but $o(n^2)$ arcs $\vec{uv}$ in $\vec{G}$ there are at least $n/4 - o(n)$ vertices $w \in V$ such that $\{u,v,w\}$ induces a cyclically oriented triangle in $\vec{G}$.
\end{enumerate}
\end{proposition}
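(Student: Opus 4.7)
The plan is to construct $\vec{G}$ by fixing a well-chosen anchor pair $\{a,b\}$, giving it an arbitrary orientation $\vec{ab}$, and propagating this orientation along tight paths from $\{a,b\}$ to as many other pairs as possible. The propagation rule is the natural one described before Definition~\ref{def: tightly connected}: along a tight path $P = e_1 e_2 \cdots e_\ell$, each edge $e_i$ of $H$ must be a cyclically oriented triangle in the tournament associated with any $7$-vertex set containing it (by condition~(i)), so given the orientation of one pair of $e_i$ the orientations of the other two pairs are forced. Using condition~(ii) and averaging over pairs, we pick $\{a,b\}$ that is tightly connected by a tight path of length at most $3$ to all but $o(n^2)$ other pairs $\{c,d\}$, orient each such $\{c,d\}$ by an arbitrary such tight path, and let $\vec{G}$ be the resulting oriented graph.

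The main subtlety is proving this orientation is independent of the path chosen. Let $P, P'$ be two tight paths of length at most $3$ from $\{a,b\}$ to $\{c,d\}$; each has at most $5$ vertices and their intersection contains $\{a,b\}\cup\{c,d\}$, so $|V(P)\cup V(P')|\le 7$, and both paths fit inside a common $7$-vertex set $S$. By condition~(i), $H[S] = C(T_S)$ for some tournament $T_S$, and after possibly reversing $T_S$ we may assume that $T_S$ orients $\{a,b\}$ as $\vec{ab}$. A simple induction along $P$ shows that, since every edge of $P$ is a cyclic triangle of $T_S$, the orientation propagated from $\{a,b\}$ coincides step-by-step with the orientation that $T_S$ assigns; applying the same reasoning to $P'$, both propagations yield the unique $T_S$-orientation of $\{c,d\}$ and therefore agree. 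This immediately gives property~(1), as nearly every pair is then oriented.

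Properties~(2) and~(3) follow from the same $7$-set idea. For~(2), suppose $\{u,v,w\}$ spans a transitive triangle in $\vec{G}$ and is also in $E(H)$. The orientations of $\{u,v\}$ and $\{u,w\}$ in $\vec{G}$ were produced by tight paths $P_{uv}$ and $P_{uw}$, both of which contain $\{a,b,u\}$, so $|V(P_{uv}) \cup V(P_{uw})| \le 7$ and we may embed the union inside a $7$-set $S$. In $T_S$ (with $\vec{ab}$ fixed) the orientations of $\{u,v\}$ and $\{u,w\}$ agree with those in $\vec{G}$ by the induction above, while $\{u,v,w\}\in E(H)$ forces $T_S$ to orient $\{u,v,w\}$ cyclically, which contradicts the transitivity assumption. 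For~(3), consider an arc $\vec{uv}$ and a vertex $w$ with $\{u,v,w\}\in E(H)$ such that both $\{u,w\}$ and $\{v,w\}$ are oriented in $\vec{G}$. Applying the same argument once to a $7$-set containing $V(P_{uv})\cup V(P_{uw})$ and once to one containing $V(P_{uv})\cup V(P_{vw})$ shows that $\vec{G}$ orients $\{u,v,w\}$ cyclically. A standard double-counting using the fact that only $o(n^2)$ pairs lack an orientation in $\vec{G}$ then confirms that for all but $o(n^2)$ arcs $\vec{uv}$ the orientability hypothesis holds for all but $o(n)$ vertices $w\in H_{uv}$, so condition~(iii) yields the required lower bound of $n/4 - o(n)$ cyclic neighbours for such an arc.

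The main obstacle is the well-definedness of the propagation. The length-$3$ bound on tight paths in condition~(ii) is precisely what makes the union of two such paths fit inside a $7$-vertex set, so that condition~(i) can be invoked to compare them via a common tournament $T_S$. This is also the reason that property~(3) is handled by two pair-of-paths comparisons rather than a single three-paths argument: three length-$3$ tight paths could span as many as $8$ vertices, placing them out of reach of condition~(i).
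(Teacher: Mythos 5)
Your proposal is correct and takes essentially the same approach as the paper: fix an anchor pair $\{a,b\}$ via condition (ii), propagate its orientation along tight paths, and use tournament-realizability of the ($\le 7$-vertex) union of two such paths to get well-definedness and property (2). The only cosmetic difference is in property (3), which the paper proves by contradiction (an arc in few cyclic triangles lies in many transitive triangles, hence by (2) in many non-edges, violating the codegree hypothesis), whereas you count cyclic triangles directly; the ingredients and the substance are identical.
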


\begin{proof}
We shall use the assumption on the existence of tight paths of length at most $3$ in order to define an orientation for almost every pair of vertices $\{u,v\} \subseteq V^{(2)}$. For two vertices $u$ and $v$, we write $u \to v$ to denote the fact that we place an arc in $\vec{G}$ that goes from $u$ to $v$.

By Assumption (ii), there exists a pair $\{a,b\} \subseteq V$ such that $\{a,b\}$ is tightly connected to $\{c,d\}$ by a path of length at most $3$ for all but $o(n^2)$ pairs $\{c,d\} \subseteq V$. Fix such a pair $\{a,b\}$.

We start by placing an arc $a \to b$ in $\vec{G}$. Now, for every pair $\{c,d\}$ tightly connected to $\{a,b\}$ by a path of length at most $3$, we define the orientation of $\{c,d\}$ in $\vec{G}$ as follows: let $P_{cd}$ be an arbitrarily chosen tight path between $\{a,b\}$ and $\{c,d\}$ of length at most $3$, and let $S_{cd}$ denote the set of all pairs of vertices that are contained in some edge of $P_{cd}$. In other words, $S_{cd}$ is the $2$-shadow of $P_{cd}$.
As we have noted in Section~\ref{subsection: tightpaths}, there exists a unique orientation $O$ of the pairs in $S_{cd}$ such that $a\to b$, and every edge of $P_{cd}$ induces a cyclically oriented triangle. We orient the pair $\{c,d\}$ in $\vec{G}$ according to its orientation in $O$. Note that by our choice of $\{a,b\}$, the oriented graph resulting from this procedure will satisfy Property (1).

Let us first show that the orientation $\vec{G}$ we have just described is well-defined, i.e., that the orientation of a pair $\{c,d\}$ does not depend on the particular choice of the tight path $P_{cd}$. Indeed, given any two tight paths $P_{cd}$ and $P'_{cd}$ from $\{a,b\}$ to $\{c,d\}$ of length at most $3$, the subgraph induced by $V(P_{cd}) \cup V(P'_{cd})$ has at most $6$ vertices. Therefore, it is tournament-realizable by Assumption (i) on $H$. Let $S'_{cd}$ be the set of pairs of vertices contained in some edge of $E(P_{cd}) \cup E(P'_{cd})$. As in the previous paragraph, there exists a unique orientation of the pairs in $S'_{cd}$ such that $a\to b$ and every edge of $E(P_{cd}) \cup E(P'_{cd})$ induces a cyclically oriented triangle (the existence is guaranteed by the tournament-realizability, the uniqueness by the tightness of the paths). In particular, $P_{cd}$ and $P'_{cd}$ define the same orientation for $\{c,d\}$ in $\vec{G}$.

Next, we claim that if three vertices $u,v,w \in V$ span a transitive triangle in $\vec{G}$ then $uvw \notin E(H)$. The argument is very similar to the one in the previous paragraph. Without loss of generality, suppose the arcs on $\{u,v\}$ and $\{u,w\}$ are oriented as $u\to v$ and $u\to w$ in $\vec{G}$. Let $P_{uv}$ and $P_{uw}$ be (some) tight paths of length at most $3$ from $\{a,b\}$ to $\{u,v\}$ and $\{u,w\}$, respectively. Since the induced subgraph $F:=H[V(P_{uv}) \cup V(P_{uw})]$ has at most $7$ vertices, it is tournament-realizable by Assumption (i). In particular, there is a tournament $T_F$ with $a\to b$ realizing $F$. The presence of the tight paths implies that  $u \to v$ and $u \to w$ in $T_F$, so $uvw \notin E(F)$ and hence also $uvw \notin E(H)$.

It remains to establish Property (3). Fix $\eps_0 > 0$ and suppose for a contradiction there were at least $\eps_0 \cdot n^2$ arcs such that each of them is contained in fewer than $(1/4-\eps_0)n$ cyclically oriented triangles in $\vec{G}$. Since the arc density of $\vec{G}$ is $1-o(1)$, at least $\eps_0/2 \cdot n^2$ of these arcs must be contained in at least $(3/4+\eps_0/2)n$ transitive triangles in $\vec{G}$. By Property (2) of $\vec{G}$ that we have established in the previous paragraph, these transitive triangles correspond to non-edges of $H$. Thus we have found $(\eps_0/2) \cdot n^2$ pairs in $H$ with codegree at most $(1/4-\eps_0/2)n$ contradicting Assumption (ii).\end{proof}

\noindent The main result of this section is the following flag algebra version of Theorem~\ref{theorem: stability}.
\begin{theorem} \label{thm:flag}
Let $(H_n)_{n\in\Nat}$ be a  sequence of $K_4^-$-free $3$-graphs. If \[\liminf_{n\rightarrow \infty} \frac{\delta_2(H_n)}n = \pi_2(K_4^-) = \frac{1}{4},\] 
then $(H_n)_{n\in\Nat}$ converges to $\phi_T$.
\end{theorem}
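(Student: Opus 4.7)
By a compactness argument (Tychonoff's theorem), to show $(H_n)\to\phi_T$ it suffices to verify that every convergent subsequence of $(H_n)$ has limit $\phi_T$. Fix such a convergent subsequence and let $\phi\in\Hom^+(\cA,\Real)$ denote its limit. The codegree hypothesis gives $\phi^\tau({\mathrm E}^\tau)\geq 1/4$ almost surely under ${\Pr}_\phi^\tau$, whence $\phi({\mathrm E})\geq 1/4$ and
\[\phi({\mathrm N}-3{\mathrm E})=1-4\phi({\mathrm E})\leq 0.\]
On the other hand, Lemma~\ref{lemma: flag} expresses ${\mathrm N}-3{\mathrm E}$ as a sum of four terms, each of which is non-negative when evaluated at any $\phi\in\Hom^+(\cA,\Real)$ satisfying the codegree condition (the POSITIVE-DEFINITE term is a sum of squares, the SLACK and TIGHT-PATH terms have positive coefficients on non-negative expressions, and the CODEGREE term is non-negative since $\phi^\tau(3{\mathrm E}^\tau-{\mathrm N}^\tau)\geq 0$ almost surely). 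So every term must vanish under $\phi$. In particular, $\phi(G)=0$ for every $G\in\cG_7$ and $\phi(\cP_i)=0$ for $i\in\{0,1,2\}$.

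The first fact says that at most $o(n^7)$ of the $7$-subgraphs of $H_n$ fail to be tournament-realizable. By Proposition~\ref{proposition: tightly connected quadruples}, the second implies that all but $o(n^4)$ quadruples in $H_n$ are tightly connected by a path of length at most $3$. Finally, the codegree hypothesis gives $\delta_2(H_n)\geq n/4 - o(n)$. A standard cleanup (removing an $o(n)$-size set of vertices that lie in disproportionately many bad $7$-subgraphs, iterated if necessary, or by an appropriate hypergraph removal step) produces a $3$-graph $H'_n$ on $(1-o(1))n$ vertices that satisfies the three pointwise hypotheses of Proposition~\ref{prop:tournament} and has the same asymptotic subgraph densities as $H_n$. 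Applying that proposition yields an oriented graph $\vec G_n$ on $V(H'_n)$ with properties (1)--(3).

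Extend $\vec G_n$ arbitrarily to a tournament $T_n$ on $V(H'_n)$ by orienting the $o(n^2)$ missing pairs. A direct double-count using properties (2) and (3) shows that $|E(H'_n)\Delta E(C(T_n))|=o(n^3)$: property (2) forces most transitive triangles of $T_n$ to be non-edges of $H'_n$, while every edge of $H'_n$ is either a cyclic triangle of $\vec G_n$ (hence of $T_n$) or is a triple containing one of the $o(n^2)$ pairs of $T_n\setminus \vec G_n$, the latter accounting for only $o(n^3)$ triples. Consequently $C(T_n)$ has the same limit $\phi$ as $H_n$, and the task reduces to showing that $T_n$ is asymptotically quasirandom.

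Summing the codegree bound from property (3) of Proposition~\ref{prop:tournament} over the arcs of $\vec G_n$ shows that $T_n$ contains at least $(1-o(1))\,n^3/24$ cyclically oriented triangles, which is asymptotically the maximum a tournament on $n$ vertices can attain. A classical quasirandomness criterion for tournaments (see e.g.~\cite{ChungGraham91}) then yields that $T_n$ is $o(1)$-quasirandom, so $C(T_n)\to\phi_T$, and therefore $\phi=\phi_T$ as required. The two main technical hurdles are (i) the cleanup step, which must bridge the flag-algebraic ``almost all'' statement $\phi(G)=0$ for $G\in\cG_7$ and the pointwise hypothesis of Proposition~\ref{prop:tournament}, and (ii) the triangle-count-to-quasirandomness implication, where the vanishing of the CODEGREE and POSITIVE-DEFINITE terms from Lemma~\ref{lemma: flag} may have to be invoked to control error terms and confirm that $T_n$ indeed achieves the near-maximal triangle count needed to apply the Chung--Graham criterion.
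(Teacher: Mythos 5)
Your overall skeleton is the same as the paper's: force all four summands of the identity in Lemma~\ref{lemma: flag} to vanish, deduce $\phi(G)=0$ for $G\in\cG_7$ and $\phi(\cP_i)=0$, clean up, apply Proposition~\ref{prop:tournament}, complete $\vec G_n$ to a tournament $T_n$, show $T_n$ is quasirandom, and finish with Chung--Graham. The genuine gap is in your quasirandomness step. Summing property (3) over arcs only tells you that $T_n$ has $(1-o(1))n^3/24$ cyclically oriented triangles, i.e.\ asymptotically the maximum; but a near-maximal $3$-cycle count is \emph{not} a quasirandomness-forcing property for tournaments. Every near-regular tournament attains it, e.g.\ the rotational tournament on $\mathbb{Z}_n$ ($n$ odd) in which $i$ beats $i+1,\dots,i+\frac{n-1}{2}$: it has the maximal number of cyclic triangles, yet the arc from $i$ to $i+1$ lies in exactly one cyclic triangle, and the tournament is far from quasirandom. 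For the same reason no criterion in~\cite{ChungGraham91} is phrased in terms of the cyclic-triangle density, so the citation does not deliver the implication you need, and your closing hedge (that the vanishing CODEGREE/POSITIVE-DEFINITE terms might be needed to ``confirm the triangle count'') does not help: the count is insufficient no matter how sharply you establish it. What you must use is the \emph{local} statement you already have, namely that all but $o(n^2)$ arcs of $T_n$ lie in at least $n/4-o(n)$ cyclic triangles, together with a result converting this into $o(1)$-quasirandomness; the paper invokes Coregliano--Razborov~\cite[Theorem~3.2]{CoreglianoRazborov17} for exactly this step.

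A secondary issue is the cleanup. Your primary suggestion---deleting an $o(n)$ set of vertices lying in many bad $7$-subgraphs---cannot in general produce hypothesis (i) of Proposition~\ref{prop:tournament}: $o(n^7)$ bad $7$-sets is compatible with $\Theta(n)$ pairwise vertex-disjoint bad $7$-sets, and then every induced subgraph on $(1-o(1))n$ vertices still contains bad $7$-subgraphs. The workable route is your fallback, made precise: the \emph{induced} hypergraph removal lemma of R\"odl and Schacht~\cite{RodlSchacht09}, which modifies $o(n^3)$ triples so that every induced $7$-vertex subgraph becomes tournament-realizable (hence the modified graph stays $K_4^-$-free), preserves the limit $\phi$, and weakens the codegree hypothesis only to ``all but $o(n^2)$ pairs'', which is exactly hypothesis (iii). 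Relatedly, your double count of $\bigl|E(H'_n)\,\Delta\,E(C(T_n))\bigr|$ is not quite complete as stated: property (2) controls transitive triangles, but to bound $\bigl|C(T_n)\setminus H'_n\bigr|$ you should compare $\vert C(T_n)\vert\le\bigl(\tfrac14+o(1)\bigr)\binom{n}{3}$ (Cauchy--Schwarz) with $\vert H'_n\vert\ge\bigl(\tfrac14-o(1)\bigr)\binom{n}{3}$ coming from the codegree condition, as the paper does.
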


\begin{proof}
Fix any sequence $(H_n)_{n \in \Nat}$ of $K_4^-$-free $3$-graphs with $\delta_2(H_n)\ge n/4-o(n)$. By compactness, pass to a subsequence $(H'_n)_{n \in \Nat}$ convergent to some limit $\phi$. Let $V_n$ denote the vertex-set of $H'_n$ and set $v_n := |V_n|$. Our aim is to use Proposition~\ref{prop:tournament} and assign to each element of the sequence a nearly-complete oriented graph. However, we will first `clean-up' the sequence $(H'_n)$ using standard regularity tools.

As established in the proof of Theorem~\ref{theorem: codegree density}, $\phi({\mathrm N}-3{\mathrm E})\geq 0$. On the other hand, our codegree assumption yields $\phi({\mathrm N}-3{\mathrm E})\leq 0$, and hence $\phi({\mathrm N}-3{\mathrm E})=0$. Now Properties 1--4 from Lemma ~\ref{lemma: flag} guarantee that the $\phi$-evaluation of each of the four summands in~\eqref{equality: flag identity} is non-negative. In particular, $\phi({\mathrm N}-3{\mathrm E})=0$ and Identity~\eqref{equality: flag identity} together imply that  the $\phi$-evaluation of all four summands on the right hand side of~\eqref{equality: flag identity} must be equal to $0$.

Recall that $\cG_7$ is the set of all $7$-vertex $3$-graphs that are not tournament-realizable. Since $w_G > 0$ for every $G \in \cG_7$ (Property 1 in Lemma~\ref{lemma: flag}), the ``SLACK'' summand in \eqref{equality: flag identity} evaluating to zero under $\phi$ implies that $\phi(G) = 0$ for every $G\in \cG_7$. Therefore, there are at most $o\left({v_n}^7\right)$ induced $7$-vertex subgraphs in $H'_n$ that are not tournament-realizable. Applying the induced version of the Hypergraph Removal Lemma of R\"odl and Schacht~\cite[Theorem 6]{RodlSchacht09}, we can add or remove $o({v_n}^3)$ edges in $H'_n$ for each $n\in \Nat$ in order to obtain a modified sequence of $3$-graphs $(H''_n)_{n\in\Nat}$ such that in fact every induced $7$-vertex subgraph of $H''_n$ is tournament-realizable. (Note this implies in particular that $H''_n$ remains $K_4^-$-free for every $n \in \Nat$.)

Since $\left|E(H''_n) \Delta E(H'_n)\right| = o({v_n}^3)$, the sequence $(H''_n)_{n\in\Nat}$ is also convergent and its limit is $\phi$. Thus it is enough to show that the limit of $(H''_n)_{n \in \Nat}$ is equal to $\phi_T$. Also, by construction, the codegree assumption on $H'_n$ and averaging imply that all but $o({v_n}^2)$ pairs of vertices in $H''_n$ have codegree at least $v_n/4 - o(v_n)$.

As we noted above, the ``TIGHT-PATH'' summand in~\eqref{equality: flag identity} must $\phi$-evaluate to zero. Since $c_i > 0$ for every $i \in \{0,1,2\}$ (Property 1 in Lemma~\ref{lemma: flag}), this implies that $\phi(\cP_0) = \phi(\cP_1) = \phi(\cP_2) = 0$. Proposition~\ref{proposition: tightly connected quadruples} then yields that $\{a,b\}$ is tightly connected to $\{c,d\}$ by a path of length at most $3$ for all but $o(v_n^4)$ choices of $a,b,c,d$.

We have thus verified in the three paragraphs above that the elements of $(H''_n)_{n\in \Nat}$ satisfy Conditions (i)--(iii) of Proposition~\ref{prop:tournament}. Applying it, we assign to each $3$-graph $H''_n$ an oriented graph ${\vec{G}_n}$ on $V_n$ satisfying Properties (1)--(3) from Proposition~\ref{prop:tournament}.

Now, every triple of vertices $\{u,v,w\}$ inducing an edge in $H''_n$ either corresponds to a cyclically oriented triangle of ${\vec{G}_n}$, or contains one of the $o({v_n}^2)$ pairs that do not span an arc in ${\vec{G}}_n$. Since all but $o({v_n}^2)$ pairs from $V(H''_n)$ have codegree $v_n/4 + o(v_n)$, the density of cyclically oriented triangles in ${\vec{G}}_n$ must be at least $1/4 - o(1)$.

On the other hand, a simple application of the Cauchy--Schwarz Inequality (see~\eqref{equation: bound on C(T)} below) shows that the density of cyclically oriented triangles in any oriented graph is at most $1/4+o(1)$. Therefore,  for all but $o({v_n}^3)$ triples $\{u,v,w\} \subseteq V(H''_n)$,  $\{u,v,w\}$ induces a cyclically oriented triangle in ${\vec{G}_n}$ if and only if it induces an edge in $H''_n$.

Let $T_n$ be obtained from $\vec{G}_n$ by adding the $o({v_n}^2)$ missing arcs with arbitrary orientations. Clearly, Property (3) of ${\vec{G}}_n$ from Proposition~\ref{prop:tournament} does transfer to $T_n$, and hence  $(T_n)_{n\in \Nat}$ is a sequence of $o(1)$-quasirandom tournaments by a result of Coregliano and Razborov~\cite[Theorem~3.2]{CoreglianoRazborov17}.
A classical result of Chung and Graham~\cite[Theorem~1 (P1)]{ChungGraham91} states that for any fixed tournament $S$, the proportion of $v(S)$-vertex subsets of $V_n$ inducing in $T_n$ a copy of $S$ is asymptotically equal to the expected density of $S$ in a random tournament. In particular, for any $3$-graph $F$, the proportion of $v(F)$-vertex subsets of $V_n$ inducing in $C(T_n)$ a copy of $F$ is asymptotically $\phi_T(F)$. In other words, $\phi_T$ is the limit of $(C(T_n))_{n\in \Nat}$. As $\vec{G}_n$ differs from $T_n$ on $o(v_n^2)$ arcs, we have $\left|E(C(T_n)) \Delta E(H''_n)\right| = o(v_n^3)$. Hence, $\phi = \phi_T$. It follows that $\phi_T$ is the only possible accumulation point of the original sequence $(H_n)_{n\in \mathbb{N}}$, so (by compactness again) the sequence $(H_n)_{n\in \mathbb{N}}$ converges to $\phi_T$.
\end{proof}

\section{Tournaments with large codegree and Hadamard matrices}\label{section: Tournament vs Hadamard}
In this section, we relate tournaments $T$ that have $\delta_2(T)$ close to $v(T)/4$ to skew Hadamard matrices. Recall from the introduction that a \emph{skew Hadamard matrix} of order $n$ is an $n\times n$ square matrix $A$ with $\pm1$ entries such that (i) $AA^t= nI_n$, and (ii) $A+A^t=2I_n$. Here $I_n$ denotes the $n\times n$ identity matrix and $A^t$ denotes the transpose of the matrix $A$.

Let $t(n)$ be the largest minimum codegree in an $n$-vertex tournament, i.e.,  
\[t(n):=\max\{\delta_2(T):  \ T \textrm{ a tournament on }[n]\}.\]
\begin{proposition}\label{proposition: upper bound on t(n)}
$t(n)\leq \floor{\frac{n+1}{4}}$ for every $n \in \Nat$.
\end{proposition}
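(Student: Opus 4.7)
The plan is a direct double-counting argument. First I would bound the total number of cyclically oriented triangles in any tournament $T$ on $[n]$, and then average over the $\binom{n}{2}$ arcs of $T$ to bound $\delta_2(T)$.

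For the first step, each transitive triangle in $T$ has a unique source vertex (the one whose in-degree within the triangle is $0$), so the number of transitive triangles with source $v$ is exactly $\binom{d_T^+(v)}{2}$. Hence the number of cyclically oriented triangles in $T$ equals
\[
\binom{n}{3} \,-\, \sum_{v \in V(T)} \binom{d_T^+(v)}{2}.
\]
Since $\sum_v d_T^+(v) = \binom{n}{2}$ is fixed and $\binom{x}{2}$ is convex, a standard integer convexity argument shows that $\sum_v \binom{d_T^+(v)}{2}$ is minimised when the out-degrees are as close to equal as integrality permits: all equal to $(n-1)/2$ for $n$ odd, and split evenly between $n/2$ and $n/2-1$ for $n$ even. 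A direct calculation then yields that the number of cyclically oriented triangles in $T$ is at most $\tfrac{n(n^2-1)}{24}$ for $n$ odd and at most $\tfrac{n(n^2-4)}{24}$ for $n$ even (the classical Kendall--Babington-Smith bound).

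For the second step, each cyclically oriented triangle contains exactly three arcs, and contributes $1$ to $C_T(x,y)$ for exactly those three arcs. Therefore $\sum_{\vec{xy} \in E(T)} C_T(x,y)$ equals three times the number of cyclically oriented triangles in $T$, and averaging over the $\binom{n}{2}$ arcs of $T$ gives
\[
\delta_2(T) \;\leq\; \frac{n+1}{4}\quad\text{for $n$ odd},\qquad
\delta_2(T) \;\leq\; \frac{n^2-4}{4(n-1)} \;=\; \frac{n+1}{4} - \frac{3}{4(n-1)}\quad\text{for $n$ even}.
\]

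Finally, since $\delta_2(T)$ is a non-negative integer, I would conclude by checking in each residue class of $n$ modulo $4$ that the averaged bound yields $\delta_2(T) \leq \lfloor (n+1)/4 \rfloor$. For $n \equiv 3 \pmod{4}$ the bound $(n+1)/4$ is itself an integer equal to $\lfloor (n+1)/4 \rfloor$; for $n \equiv 1 \pmod{4}$ it is a half-integer whose floor is $\lfloor (n+1)/4 \rfloor$; and for even $n$ the subtractive correction $\tfrac{3}{4(n-1)}$ is exactly enough to ensure that the averaged bound is strictly less than $\lfloor (n+1)/4 \rfloor + 1$. There is no real obstacle here: both the convexity estimate and the arc-averaging are entirely routine, so the only care required is in handling the integer rounding in this last step.
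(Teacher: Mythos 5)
Your proof is correct and follows essentially the same route as the paper: both bound the total number of cyclically oriented triangles via a degree-sequence optimisation (your count of transitive triangles by their source plus convexity is equivalent to the paper's identity $\sum_x d^-(x)d^+(x) = 2\vert C(T)\vert + \binom{n}{3}$ with the pointwise bound $d^-(x)d^+(x)\leq \lfloor\frac{n-1}{2}\rfloor\lceil\frac{n-1}{2}\rceil$, yielding the same bound on $\vert C(T)\vert$), and then average $C_T(x,y)$ over the $\binom{n}{2}$ pairs and invoke integrality. The only cosmetic difference is in the final rounding step, where in fact the single bound $\delta_2(T)\leq (n+1)/4$ together with integrality of $\delta_2(T)$ already gives $\delta_2(T)\leq\lfloor\frac{n+1}{4}\rfloor$ in every residue class, so your sharper even-$n$ estimate is not needed.
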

\begin{proof}
Let $n=4k+r$ with $r\in\{0,1,2,3\}$, and let $T$ be a tournament on $[n]$. Set $u:=\floor{\frac{n-1}{2}}\cdot \ceil{\frac{n-1}{2}}$. Double-counting and applying the Cauchy--Schwarz Inequality, we have that the number $\vert C(T)\vert$ of cyclically oriented triangles in $T$ satisfies:
\begin{align}\label{equation: bound on C(T)}
	2\ \vert C(T)\vert+\binom{n}{3} &=\sum_{x\in [n]} d^-(x)d^+(x)\leq nu.
\end{align}
Since $\binom{n}2 \cdot \delta_2(T) \le 3\ \vert C(T)\vert$, rearranging the terms in (\ref{equation: bound on C(T)}) yields the following estimate
\begin{align}\label{equation: bound on delta2 of C(T)}
\delta_2(T)&\leq \left\lfloor \frac{3\ \vert C(T)\vert}{\binom{n}{2}} \right\rfloor \leq \left\lfloor\frac{3u}{n-1}-\frac{n-2}{2}\right\rfloor.
\end{align}
The right-hand side of (\ref{equation: bound on delta2 of C(T)}) is equal to $k+1$ if $r=3$, and $k$ otherwise.
\end{proof}
\noindent The proof above implies that for $n\equiv 3\  (\mathrm{mod}\ 4)$, the upper bound of $\frac{n+1}{4}$ on $\delta_2(T)$ can only be attained if $T$ is highly regular. Explicitly, we have the following corollary.
\begin{corollary}\label{corollary: extremals are regular}
If $T$ is an $n$-vertex tournament with $\delta_2(T) = \frac{n+1}4$, then $d^-(v)=d^+(v)=\frac{n-1}2$, $C_T(v,w)=\frac{n+1}4$ and $R_T(v,w)=\frac{n-3}4$  for every distinct $v,w\in V(T)$.
\end{corollary}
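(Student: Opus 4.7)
The plan is to unpack the chain of inequalities in the proof of Proposition~\ref{proposition: upper bound on t(n)} and observe that the hypothesis $\delta_2(T)=\frac{n+1}{4}$ forces every one of them to be an equality. First, since $\delta_2(T)$ is an integer, the value $\frac{n+1}{4}$ forces $n\equiv 3\pmod 4$; write $n=4k+3$, so $u=\left(\frac{n-1}{2}\right)^2$.

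Next, recall that every cyclically oriented triangle contributes to $C_T(v,w)$ for exactly three unordered pairs $\{v,w\}$, so $\sum_{\{v,w\}}C_T(v,w)=3|C(T)|$. Combined with $C_T(v,w)\ge\delta_2(T)$ this gives $\binom{n}{2}\delta_2(T)\le 3|C(T)|$. The proof of Proposition~\ref{proposition: upper bound on t(n)} then yields $3|C(T)|\le\binom{n}{2}\cdot\frac{n+1}{4}$ via (\ref{equation: bound on C(T)}) and (\ref{equation: bound on delta2 of C(T)}). The assumption $\delta_2(T)=\frac{n+1}{4}$ forces equality throughout, so $C_T(v,w)=\frac{n+1}{4}$ for every pair $\{v,w\}$. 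Moreover, equality in the Cauchy--Schwarz step $\sum_{x}d^-(x)d^+(x)\le nu$ with $d^-(x)+d^+(x)=n-1$ and $u=\left(\frac{n-1}{2}\right)^2$ is possible only when $d^-(x)d^+(x)=\left(\frac{n-1}{2}\right)^2$ for every $x$, i.e.\ $d^-(x)=d^+(x)=\frac{n-1}{2}$.

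Finally, to compute $R_T(v,w)$, I would fix the arc $\vec{vw}$ and partition $V(T)\setminus\{v,w\}$ according to the orientations of $\{v,z\}$ and $\{w,z\}$ into the four sets $A_{++}=N^+(v)\cap N^+(w)$, $A_{+-}=N^+(v)\cap N^-(w)$, $A_{-+}=N^-(v)\cap N^+(w)$, $A_{--}=N^-(v)\cap N^-(w)$. Directly from the definitions, $|A_{-+}|=C_T(v,w)=\frac{n+1}{4}$ and $|A_{+-}|=R_T(v,w)$. Using $d^-(v)=|A_{-+}|+|A_{--}|$ and $d^+(v)=1+|A_{++}|+|A_{+-}|$ with $d^\pm(v)=\frac{n-1}{2}$, one finds $|A_{--}|=|A_{++}|=\frac{n-3}{4}$, and then from the partition identity $|A_{++}|+|A_{+-}|+|A_{-+}|+|A_{--}|=n-2$ one obtains $R_T(v,w)=\frac{n-3}{4}$.

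There is no real obstacle: the only delicate point is to confirm that the floor step in~(\ref{equation: bound on delta2 of C(T)}) is tight precisely when the double-counting inequality $\binom{n}{2}\delta_2(T)\le 3|C(T)|$ is tight, and that the assumption forces this, which follows because $\frac{3u}{n-1}-\frac{n-2}{2}$ is already an integer when $n\equiv 3\pmod 4$.
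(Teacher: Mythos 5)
Your argument is correct in substance and is essentially the paper's proof: you force equality in both (\ref{equation: bound on C(T)}) and (\ref{equation: bound on delta2 of C(T)}), using that $\frac{3u}{n-1}-\frac{n-2}{2}$ equals $\frac{n+1}{4}$ exactly when $n\equiv 3\pmod 4$, which pins down all in-/out-degrees at $\frac{n-1}{2}$ and all codegrees at $\frac{n+1}{4}$; your four-cell partition for $R_T(v,w)$ is just the paper's inclusion--exclusion identity $R_T(v,w)=(n-2)-d^-(v)-d^+(w)+C_T(v,w)$ written out. One bookkeeping slip in the last step: from $d^-(v)=|A_{-+}|+|A_{--}|$ and $d^+(v)=1+|A_{++}|+|A_{+-}|$ alone you obtain $|A_{--}|=\frac{n-3}{4}$ but only $|A_{++}|+|A_{+-}|=\frac{n-3}{2}$, so you cannot yet conclude $|A_{++}|=\frac{n-3}{4}$ (and the partition identity adds nothing new, being the sum of these two equations). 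You must also invoke the regularity at $w$, e.g.\ $d^-(w)=\frac{n-1}{2}$ gives $|A_{+-}|+|A_{--}|=\frac{n-3}{2}$ and hence $R_T(v,w)=|A_{+-}|=\frac{n-3}{4}$ directly; this is available since the degree regularity holds for every vertex, so the fix is immediate, and it is precisely why the paper's one-line computation uses degrees at both endpoints ($d^-(v)+d^+(w)=n-1$).
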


\begin{proof}
Since $|C(T)|$ meets the upper bound form Proposition~\ref{proposition: upper bound on t(n)}, we must have equality in both estimates (\ref{equation: bound on C(T)}) and (\ref{equation: bound on delta2 of C(T)}). Note that $3u/(n-1)-(n-2)/2$ is exactly $k+1$, without rounding. In particular, every vertex has both in-degree and out-degree exactly $\frac{n-1}2$, and every pair of vertices has codegree exactly $\frac{n+1}4$. Fix a pair $\{v,w\}\in V(T)^{(2)}$, and assume without loss of generality it is oriented as $\vec{vw}$ in $T$. Since $d^-(v) + d^+(w) = n-1$, we have by the inclusion-exclusion principle that
\[R_T(v,w) =(n-2) - d^-(v)-d^+(w) +C_T(v,w)= (n-2) - (n-1) + \frac{n+1}{4} = \frac{n-3}{4}.\]
\end{proof}
When is the upper bound in Proposition~\ref{proposition: upper bound on t(n)} tight? For $n\equiv 3 \pmod 4$, this question is very closely related to the existence of a skew Hadamard matrix of order $n+1$. As we have mentioned in the introduction, skew Hadamard matrices are known to exist for infinitely many orders, and conjectured to exist for any order divisible by four.
\begin{proposition}\label{proposition: t(n) bound tight iff exists skew Hadamard matrix}
For $n\equiv 3 \pmod 4$, $t(n)=\frac{n+1}{4}$ if and only if there exists a skew Hadamard matrix of order $n+1$.
\end{proposition}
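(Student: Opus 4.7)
The plan is to establish a direct matrix correspondence between $n$-vertex tournaments $T$ satisfying $\delta_2(T)=(n+1)/4$ and skew Hadamard matrices of order $n+1$. For an $n$-vertex tournament $T$, define the $n\times n$ matrix $S_T$ by $(S_T)_{ii}=1$ and, for $i\neq j$, $(S_T)_{ij}=1$ if $\vec{ij}\in E(T)$ and $(S_T)_{ij}=-1$ otherwise. Then $S_T+S_T^t=2I_n$ by construction. Augment $S_T$ to form
\[H_T:=\begin{pmatrix} 1 & \mathbf{1}_n^t \\ -\mathbf{1}_n & S_T\end{pmatrix},\]
so that $H_T+H_T^t=2I_{n+1}$, and hence $H_T$ is a skew Hadamard matrix of order $n+1$ if and only if $H_TH_T^t=(n+1)I_{n+1}$. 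A direct block computation reduces this identity to two conditions on $S_T$: (a) every row sum of $S_T$ equals $1$, and (b) $\sum_k (S_T)_{ik}(S_T)_{jk}=-1$ for all $i\neq j$. Combinatorially, (a) is equivalent to $T$ being regular, while splitting the sum in (b) according to whether $k\in\{i,j\}$ shows (b) is equivalent to $|N^+(i)\cap N^+(j)|+|N^-(i)\cap N^-(j)|=(n-3)/2$ for every pair $\{i,j\}\subseteq [n]$.

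For the forward direction, suppose $T$ is an $n$-vertex tournament with $\delta_2(T)=(n+1)/4$. Corollary~\ref{corollary: extremals are regular} immediately gives the regularity condition (a), and moreover $C_T(v,w)=(n+1)/4$ and $R_T(v,w)=(n-3)/4$ for every pair of distinct vertices. For any pair $\{i,j\}$ with $\vec{ij}\in E(T)$, regularity of $T$ forces $|N^+(i)\cap N^+(j)|=|N^-(i)\cap N^-(j)|$ (by equating $d^+(i)-1$ with $d^-(j)-1$), and the relation $d^+(i)=1+R_T(i,j)+|N^+(i)\cap N^+(j)|=(n-1)/2$ yields $|N^+(i)\cap N^+(j)|=(n-3)/4$. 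Hence (b) holds and $H_T$ is a skew Hadamard matrix of order $n+1$.

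For the reverse direction, given a skew Hadamard matrix $H$ of order $n+1$, normalize it via the signed-diagonal conjugation $D:=\mathrm{diag}(1,H_{12},H_{13},\ldots,H_{1,n+1})$. Since $D^2=I$, the matrix $H':=DHD$ is again skew Hadamard; by construction, its first row is $\mathbf{1}_{n+1}^t$, and since $H_{j1}=-H_{1j}$, its first column is $(1,-1,\ldots,-1)^t$. The lower-right $n\times n$ block of $H'$ has all diagonal entries $1$, entries in $\{\pm 1\}$ off the diagonal, and satisfies the skew condition on the off-diagonal, so it equals $S_T$ for a unique tournament $T$ on $[n]$. The identity $H'(H')^t=(n+1)I_{n+1}$ then yields conditions (a) and (b), which by the same combinatorial translation force $T$ to be regular with $C_T(v,w)=(n+1)/4$ for every pair; hence $\delta_2(T)\geq (n+1)/4$, and combining with Proposition~\ref{proposition: upper bound on t(n)} gives equality. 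The main care point in the argument is keeping orientation conventions straight when translating the inner-product identity (b) into the combinatorial equation $P+Q=(n-3)/2$ (with $P=|N^+(i)\cap N^+(j)|$, $Q=|N^-(i)\cap N^-(j)|$) and then using regularity to force $P=Q=(n-3)/4$; the integrality condition $n\equiv 3\pmod 4$, which is precisely what makes $(n+1)/4$ an integer, mirrors the fact that skew Hadamard matrices have orders divisible by $4$.
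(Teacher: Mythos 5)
Your proof is correct, and at its core it is the same Reid--Brown correspondence between doubly regular tournaments and skew Hadamard matrices that the paper uses; the difference is in how much is made explicit. The paper first invokes Corollary~\ref{corollary: extremals are regular} to get double regularity, then cites Reid and Brown for the equivalence and only sketches the ``only if'' direction, working with the skew-symmetric $\pm1$ adjacency matrix $A$ (zero diagonal), bordering it to get $C$ with $CC^t=nI_{n+1}$, and finally setting $H=C+I_{n+1}$; the converse is dismissed with ``apply the steps in reverse order.'' You instead work directly with $S_T=A+I_n$, border it to form $H_T$, and reduce ``$H_T$ is skew Hadamard'' to the two clean conditions (a) row sums of $S_T$ equal $1$ (regularity) and (b) $\sum_k (S_T)_{ik}(S_T)_{jk}=-1$, which you correctly translate into $|N^+(i)\cap N^+(j)|+|N^-(i)\cap N^-(j)|=(n-3)/2$; your counting via $d^+(i)=1+R_T(i,j)+|N^+(i)\cap N^+(j)|$ together with $C_T=\frac{n+1}{4}$, $R_T=\frac{n-3}{4}$ from Corollary~\ref{corollary: extremals are regular} is exactly the column-count the paper performs on its $2\times(n-2)$ submatrix. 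What your write-up buys is a genuinely self-contained converse: the signed-diagonal normalization $H'=DHD$, the identification of the lower-right block with $S_T$ for a tournament $T$, and the back-translation of (a), (b) into $C_T(v,w)=\frac{n+1}{4}$ for every pair (using $P=Q$ forced by regularity), after which Proposition~\ref{proposition: upper bound on t(n)} gives $t(n)=\frac{n+1}{4}$ --- all of which the paper leaves to the citation. I checked the block identities and the combinatorial translations in both directions and found no gaps.
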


\begin{proof}
Fix $n=4k+3$ and $T$ a tournament on $[n]$ with $\delta_2(T)=\lfloor \frac{n+1}{4}\rfloor$. By Corollary~\ref{corollary: extremals are regular}, all in- and out-degrees in $T$ are equal to $2k+1$ and all codegrees are equal to $k+1$. An old result of Reid and Brown~\cite[Theorems 1 and 2]{ReidBrown72} yields that such a tournament exists if and only if there exists a skew Hadamard matrix of order $4k+4$. For the sake of completeness, we sketch how one can obtain a skew Hadamard matrix from such a tournament (i.e.\  the ``only if'' direction). Note that all the steps can be also applied in the reverse order to establish the ``if'' direction.

Let $A$ be the adjacency matrix of $T$ with $A_{ii}=0$, $A_{ij}=+1$ if $\vec{ij}\in T$ and $A_{ij}=-1$ if $\vec{ji}\in T$. Now fix a pair $\{i,j\}\in [n]^{(2)}$ with $i<j$ and, say, $\vec{ji}\in E(T)$. Consider the $2\times(n-2)$-submatrix $M:=A(\{i,j\},[n]-\{i,j\})$. By Corollary~\ref{corollary: extremals are regular}, this submatrix has exactly $k+1$ columns $(1,-1)^T$, corresponding to the exactly $k+1$ vertices $v$ with $\vec{iv},\vec{vj}\in E(T)$. The row of $M$ indexed by $i$ has a further $2k+1-(k+1)=k$ entries equal to $1$, so the column $(1,1)^T$ appears in $M$ exactly $k$ times. Likewise, the column $(-1,-1)^T$ appears exactly $k$ times in $M$, and finally the column  $(-1,+1)^T$  appears $R_T(i,j) =k$ times. Thus, the scalar product of the rows $i$ and $j$ in $A$ is equal to $-(k+1)+k+k-k=-1$.

Now append to $A$ a row indexed by $0$ with all entries set to $1$, and then a column indexed by $0$ whose first entry is $0$ and all of whose other entries are set to $-1$. This yields an $(n+1)\times (n+1)$ matrix $C$ satisfying $C=-C^t$ (so $C$ is skew-symmetric) and $CC^t=nI_{n+1}$ (for the latter, observe that any pair of distinct rows is orthogonal). 
Consider now the $(n+1) \times (n+1)$ matrix $H:= C+I_{n+1}$. Then $H$ is a square matrix with $\pm 1$ entries such that $H+H^t=C+C^t+2I_{n+1}=2I_{n+1}$ and \begin{align*} HH^t=(C+I_{n+1})(C^t+I_{n+1})=CC^t+I_{n+1} +C+C^t= (n+1)I_{n+1}\,. \end{align*}
Thus $H$ is a skew Hadamard matrix of order $n+1$.
\end{proof}
\noindent Clearly, knowing the exact value of $t(4k+3)$ for some $k\in \Nat$ allows us to also give good bounds on $t(n)$ for $n$ close to $4k+3$. We summarize our knowledge of $t(n)$ in the following proposition.

\begin{proposition}\label{prop:  existence of good tournaments}
Fix a non-negative integer $k$. If there exists a skew Hadamard matrix of order $4k+4$, then
\begin{multicols}{2} \begin{enumerate}[\quad 1)]
  \item $k+1 \ge t(4k+4) \ge k$,
  \item $t(4k+3)=k+1$,
  \item $t(4k+2)=k$, and
  \item $k \ge t(4k+1) \ge k-1$.
\end{enumerate} \end{multicols}
\end{proposition}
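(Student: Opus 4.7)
My plan is to first dispatch the upper bounds via Proposition~\ref{proposition: upper bound on t(n)}, then use the extremal doubly regular tournament $T$ on $4k+3$ vertices (whose existence is guaranteed by the skew Hadamard assumption together with Proposition~\ref{proposition: t(n) bound tight iff exists skew Hadamard matrix}) to build tournaments on $4k+1$, $4k+2$ and $4k+4$ vertices with the advertised codegree.

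The upper bounds in all four items are immediate from Proposition~\ref{proposition: upper bound on t(n)}. Part~2) is exactly the content of Proposition~\ref{proposition: t(n) bound tight iff exists skew Hadamard matrix}, and it supplies the doubly regular tournament $T$ on $4k+3$ vertices that I will use throughout: by Corollary~\ref{corollary: extremals are regular}, every vertex of $T$ has $d^+_T=d^-_T=2k+1$, while every pair of distinct vertices satisfies $C_T=k+1$ and $R_T=k$; a two-line inclusion-exclusion further gives the common in-neighbour and common out-neighbour counts equal to $k$ for any pair.

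For Parts~3) and~4) I would simply delete one (respectively two) vertex from~$T$: since any given vertex lies in at most one cyclic triangle on any fixed pair of the remaining vertices, each deletion reduces the codegree of a surviving pair by at most one. Thus $t(4k+2)\ge k+1-1=k$ and $t(4k+1)\ge k+1-2=k-1$, matching the claimed lower bounds.

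The main obstacle is the lower bound $t(4k+4)\ge k$ in Part~1), where I need to \emph{extend} $T$ by a new vertex $v$ while keeping $\delta_2(T^+)\ge k$. For pairs $\{x,y\}\subseteq V(T)$ the codegree can only stay the same or grow with the addition of $v$, so these trivially remain $\ge k+1$; the challenge is to orient the edges at $v$ so that $C_{T^+}(v,x)\ge k$ for every $x\in V(T)$. The naive attempt ``$v$ copies some $u\in V(T)$'' (i.e.\ $N^+(v)=N^+_T(u)$) fails catastrophically on the single pair $\{v,u\}$, where the codegree collapses to $0$; random orientations also fail, since the expected codegree of each new pair sits only marginally above~$k$. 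My proposal is to make a minimal modification of the copy construction: fix any $u\in V(T)$ and set $N^-(v):=N^+_T(u)\cup\{u\}$ together with $N^+(v):=N^-_T(u)$, so that $v$ imitates $u$ on $V(T)\setminus\{u\}$ but additionally loses to $u$. The required bound $C_{T^+}(v,x)\ge k$ should then follow from a short case analysis based on whether $x=u$, $x\in N^+_T(u)$, or $x\in N^-_T(u)$, using the four-cell intersection formulas from Corollary~\ref{corollary: extremals are regular}; indeed, the three relevant codegrees should evaluate to $C_{T^+}(v,u)=|N^-_T(u)|=2k+1$, to $C_{T^+}(v,x)=|N^-_T(u)\cap N^-_T(x)|=k$ for $x\in N^+_T(u)$, and to $C_{T^+}(v,x)=|N^+_T(x)\cap N^+_T(u)|+1=k+1$ for $x\in N^-_T(u)$. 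Verifying these three equalities from the doubly regular structure is the only non-routine computation in the argument.
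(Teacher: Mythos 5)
Your proposal is correct and essentially reproduces the paper's own argument: upper bounds from Proposition~\ref{proposition: upper bound on t(n)}, Part~2) from Proposition~\ref{proposition: t(n) bound tight iff exists skew Hadamard matrix}, Parts~3)--4) by deleting one or two vertices from the doubly regular tournament, and Part~1) by appending a new vertex whose arcs to $V(T)\setminus\{u\}$ are the reverses of those at $u$ (the paper leaves the arc between $u$ and the new vertex arbitrary and gets codegrees $R_T(\cdot,u)=k$ for the new pairs, while you fix $u\to v$ and verify the same cell counts $k$, $k+1$, $2k+1$, all of which check out against Corollary~\ref{corollary: extremals are regular}). One small wording slip: with $N^+(v)=N^-_T(u)$ the new vertex \emph{reverses} $u$ rather than ``imitates'' it, but your formulas and the three codegree evaluations are the correct ones, so this does not affect the argument.
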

\begin{proof}
 All four upper bounds on $t(n)$ follow from Proposition~\ref{proposition: upper bound on t(n)} so we only need to establish the lower bounds.

Suppose there exists a skew Hadamard matrix of order $4k+3$. Proposition~\ref{proposition: t(n) bound tight iff exists skew Hadamard matrix} establishes the lower bound in 2). Let $T_k$ be an optimal tournament on $4k+3$ vertices. Deleting an arbitrary vertex from $T_k$ yields a $(4k+2)$-vertex tournament $T^-_k$ satisfying $\delta_2(T^-_k) \geq k$, proving 3). Similarly, deleting two arbitrary vertices from $T_k$ yields a tournament $T_k^=$ on $4k+1$ vertices with $\delta_2(T^=_k)\geq k-1$, proving 4).

It remains only to show that $t(4k+4) \ge k$. Fix an arbitrary vertex $x\in V(T_k)$, and let $T_k^+$ be a $(4k+4)$-vertex tournament constructed from $T_k$ in the following way: add a new vertex $y$ to $T_k$, orient the arc $xy$ arbitrarily, and give each pair $vy$ with $v \in V(T_k)\setminus \{x\}$ the orientation opposite to the orientation of $vx$ (i.e., if $\vec{vx}\in T_k$ then $\vec{yv}\in T_k^+$, and if $\vec{xv}\in T_k$ then $\vec{vy}\in T_k^+$).

We claim that  $\delta_2(T_k^+)\geq k$. By the assumption on $T_k$, it is enough to check that all the pairs containing $y$ are in at least $k$ cyclically oriented triangles. First, consider a pair $vy$ for some $v\in V(T_k)\setminus\{x\}$. Since $vy$ is oriented in the opposite direction to $vx$, there are $R_{T_k}(v,x)=k$ cyclically oriented triangles in $T_k^+$ containing $v$ and $y$, as desired.  Finally, consider the pair $xy$. Without loss of generality, we oriented it as $\vec{xy}$, so every one of the $2k+1$ in-neighbours of $x$ is an out-neighbour of $y$. Therefore, $C_{T_k^+}(x,y) = 2k+1>k$, which concludes the proof.
\end{proof}

\section{The codegree threshold of $K_4^-$}\label{section: codegree threshold}
We now turn our attention to determining the exact value of $\ex_2\left(n, K_4^-\right)$ for infinitely many $n$. The main result of this section is the following.

\begin{theorem}\label{theorem: largest size among codegree extremal achieved by tournaments}
Let $G$ be a $K_4^-$-free $3$-graph on $n$ vertices and with $\delta_2(G)=n/4-o(n)$.  Then there is a tournament $T$ on $V(G)$ such that $|G|\le |C(T)|$. Moreover, if $|G| = |C(T)|$ then $G=C(T)$.
\end{theorem}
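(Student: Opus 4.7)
The plan is to produce a tournament $T$ on $V(G)$ such that $G\subseteq C(T)$; both conclusions then follow at once, since $|G|=|G\cap C(T)|\leq|C(T)|$, and equality here forces $C(T)\setminus G=\emptyset$, i.e.\ $G=C(T)$. The starting point is Theorem~\ref{theorem: stability}: applied to $G$, it supplies an $o(1)$-quasirandom tournament $T_0$ on $V(G)$ with $|G\,\Delta\, C(T_0)|=o(n^3)$. In $C(T_0)$ the link graph of any vertex $v$ is exactly the bipartite graph of arcs from $N^+_{T_0}(v)$ to $N^-_{T_0}(v)$ (sitting inside $K_{N^-_{T_0}(v),N^+_{T_0}(v)}$), so the condition $G\subseteq C(T)$ amounts to the following: for every $v$ the triangle-free link $G_v$ must be bipartite with parts $(N^-_T(v),N^+_T(v))$, and these local bipartitions must patch consistently into a single tournament.

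I would carry this out in two stages. In the \emph{local} stage, for each $v$ I look at $G_v$ (which is triangle-free by $K_4^-$-freeness and, by stability, differs from the bipartite graph $C(T_0)_v$ on $o(n^2)$ edges on average) and find a bipartition $(A_v,B_v)$ of $V\setminus\{v\}$ with $G_v\subseteq K_{A_v,B_v}$, obtained from $(N^-_{T_0}(v),N^+_{T_0}(v))$ by reassigning only $o(n)$ vertices. In the \emph{global} stage, I declare $u\to v$ in $T$ iff $u\in A_v$ and must check this is consistent ($u\in A_v\iff v\in B_u$ for every pair). The main tool here is the tight-connectivity mechanism of Proposition~\ref{prop:tournament} applied to $G$ after the small $o(n^3)$ cleaning used in the proof of Theorem~\ref{thm:flag}: it forces the orientation of one seed pair to propagate to all but $o(n^2)$ pairs along tight paths of length at most $3$, so up to an $o(n^2)$ residual set of pairs $T$ is essentially unique, and the residual pairs can be oriented to agree with the local bipartitions.

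The principal obstacle is that $K_4^-$-freeness only gives \emph{triangle-freeness} of each $G_v$, not bipartiteness, yet any odd cycle in some $G_v$ is a fatal local obstruction to the desired containment $G\subseteq C(T)$. The technical heart of the proof will therefore be showing that, under the near-extremal codegree hypothesis $\delta_2(G)\geq n/4-o(n)$, no $G_v$ contains an odd cycle. I would argue this by combining the stability input (each $G_v$ is close to the bipartite graph $C(T_0)_v$) with a direct combinatorial argument: consecutive edges of an odd cycle $a_1a_2\dots a_{2k+1}$ in $G_v$, together with $K_4^-$-freeness, force the triples $a_ia_{i+1}a_{i+2}$ to be non-edges of $G$, and an odd-parity obstruction together with propagation along tight paths then yields codegree deficits on certain pairs $\{v,a_i\}$, contradicting $\delta_2(G)\geq n/4-o(n)$. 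Once bipartiteness of every $G_v$ is established, the tight-path propagation of Proposition~\ref{prop:tournament} makes the global patching essentially forced, the resulting tournament $T$ satisfies $G\subseteq C(T)$, and the theorem follows.
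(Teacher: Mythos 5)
Your plan proves (or rather, would need to prove) a statement strictly stronger than the theorem: that every near-extremal $G$ is actually \emph{contained} in some $C(T)$. The theorem only asserts $|G|\le |C(T)|$, and the paper's proof is built precisely around the possibility that no containment holds: it fixes a tournament $T$ maximising $|G\cap C(T)|$, sets $B=G\setminus C(T)$ and $M=C(T)\setminus G$, and shows through a chain of claims (quasirandomness of the best-fit $T$, $|B_{xy}|=o(n)$ for every pair, $|M_x|=o(n^2)$, and finally that every bad edge forces at least $n/4-o(n)$ missing edges meeting it in two vertices) that $|M|>|B|$ whenever $B\neq\emptyset$. Nowhere does it show $B=\emptyset$ for merely near-extremal $G$, and your proposal offers no real proof of it either. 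The ``technical heart'' you identify --- that no link $G_v$ can contain an odd cycle when $\delta_2(G)\ge n/4-o(n)$ --- is exactly the missing ingredient, and your sketch of it (``an odd-parity obstruction together with propagation along tight paths then yields codegree deficits'') is not an argument: triangle-free graphs of minimum degree about $n/4$ need not be bipartite (blow-ups of $C_5$ have minimum degree $2n/5$), so there is no local obstruction, and the global obstruction you would need is precisely what has to be quantified. A back-of-envelope check shows the difficulty: deleting edges to repair the $K_4^-$'s created by a single ``wrongly oriented'' triple concentrates roughly $n/4$ deletions on the three pairs of that triple, but turning this into a clean contradiction for an arbitrary near-extremal $G$ (rather than for ``$C(T)$ plus one edge'') is essentially the content of the paper's Claims 5.4--5.11, which you would have to redo in a stronger, exact form.

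There is a second, independent gap in the ``global stage''. The tools you invoke are intrinsically approximate: Theorem~\ref{theorem: stability} gives only $|G\Delta C(T_0)|=o(n^3)$, and Proposition~\ref{prop:tournament} applies only after the removal-lemma clean-up used in the proof of Theorem~\ref{thm:flag}, so its conclusions (an orientation of all but $o(n^2)$ pairs whose transitive triangles avoid the \emph{modified} graph $H''$, not $G$) cannot certify the exact containment $G\subseteq C(T)$ for the original $G$: the $o(n^3)$ edited triples and the $o(n^2)$ unoriented pairs may each carry edges of $G$ that are not cyclic under any completion of the orientation. So even granting bipartiteness of every link, your patching step does not close. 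This is exactly why the paper abandons the containment target and instead runs the bad/missing double count against a best-fit tournament; if you want to salvage your approach you would either need a genuinely new exact (error-free) structural argument, or you should fall back on the paper's counting strategy.
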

\noindent Before proving Theorem~\ref{theorem: largest size among codegree extremal achieved by tournaments}, let us apply it to determine the codegree threshold of $K_4^-$.

\begin{proof}[Proof of Theorem~\ref{theorem: codegree threshold}]
Theorem~\ref{theorem: largest size among codegree extremal achieved by tournaments} implies that tournament constructions have the largest number of edges over all near-extremal constructions. In particular if $G$ is a $K_4^-$-free $3$-graph on $n$ vertices and $n$ is sufficiently large, then there exists a tournament $T$ on $n$ vertices such that 
\begin{align}\label{eq: bound on delta2, proof of codeg threshold}\delta_2(G) \leq \floor{\frac{3\vert G\vert }{\binom{n}{2}}} \leq \floor{\frac{3\vert C(T)\vert}{\binom{n}{2}}} \leq \floor{\frac{n+1}{4}}\,,\end{align}
where the last inequality follows from~\eqref{equation: bound on delta2 of C(T)}. This gives the claimed upper bound on $\ex_2(n, K_4^-)$. Proposition~\ref{prop:  existence of good tournaments} implies we have equality for $n=4k+3$ and $n=4k+2$ if there exists a skew Hadamard matrix of order $4k+4$.

Conversely, note that by Proposition~\ref{proposition: t(n) bound tight iff exists skew Hadamard matrix}, for $n=4k+3$ the last inequality in~\eqref{eq: bound on delta2, proof of codeg threshold} is an equality if and only if there exists a skew Hadamard matrix of order $4k+4$ and $T$ is an $n$-vertex tournament with $\delta_2(T)=t(n)$. Furthermore, by  the codegree-regularity for such tournaments observed in Corollary~\ref{corollary: extremals are regular}, we have $\floor{\frac{3\vert G\vert }{\binom{n}{2}}} = \floor{\frac{3\vert C(T)\vert}{\binom{n}{2}}}$ only if $\vert G\vert = \vert C(T)\vert$. By the ``moreover'' part of Theorem~\ref{theorem: codegree threshold} and~\eqref{eq: bound on delta2, proof of codeg threshold}, it follows that $\delta_2(G)=\frac{n+1}{4}$ only if $G=C(T)$. In particular, for $n=4k+3$ sufficiently large, if $\ex_2(n, K_4^-)=\frac{n+1}{4}$ then every codegree-extremal $3$-graph for that value of $n$ is given by an Erd{\H o}s--Hajnal tournament construction, and there exists skew Hadamard matrices of order $n+1$.
\end{proof}

\subsection{Proof of Theorem~\ref{theorem: largest size among codegree extremal achieved by tournaments}}\label{section: proofexact}
Throughout  the remainder of this section, we assume $n_0 \in \Nat$ is sufficiently large, and $n \ge n_0$. Let $G$ be a $K_4^-$-free $3$-graph on $[n]$ with $\delta_2(G) = \frac{n}{4} - o(n)$, and $T$  a tournament on $[n]$ that maximises $\vert G\cap C(T)\vert$ (or, equivalently, minimises $\vert G\setminus C(T)\vert$). The definition of $T$ implies, amongst other things, that we cannot increase the size of the intersection $\vert G\cap C(T)\vert$ by reversing the orientation of an arc $\vec{xy}$ in $T$. In terms of the joint neighbourhoods of $\{x,y\}$, this means
\begin{align}\label{eq:Reverse}
 |(G\cap C(T))_{xy}|\ge |G_{xy}\cap \{z:\  \vec{xz},\vec{zy}\in T\}|.
\end{align}
We begin our proof by showing that the symmetric difference of $G$ and $C(T)$ is small. 
\begin{claim}\label{claim: T is close to G}
$\vert G \Delta C(T)\vert = o(n^3)$.
\end{claim}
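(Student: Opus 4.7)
The plan is to deduce this claim almost directly from the stability result, Theorem~\ref{theorem: stability}, applied to $G$. That theorem furnishes us with some (quasirandom) tournament $T'$ on $[n]$ with $|G \Delta C(T')| = o(n^3)$. The key observation is that $T$ was chosen to maximise $|G\cap C(T)|$, whereas $T'$ is a priori a different tournament; the task is to transfer the smallness of the edit distance from $T'$ to $T$.

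My first step would be to bound $|G \setminus C(T)|$. By the extremal choice of $T$, we have $|G \cap C(T)| \ge |G \cap C(T')|$. Since $|G \cap C(T')| \ge |G| - |G \Delta C(T')| = |G| - o(n^3)$, it follows that $|G \setminus C(T)| = |G| - |G \cap C(T)| = o(n^3)$.

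The second step is to bound $|C(T) \setminus G|$. Here I would combine two easy ingredients. On one hand, the codegree hypothesis gives the lower bound
\[ |G| \;=\; \tfrac{1}{3} \sum_{uv \in [n]^{(2)}} d_G(u,v) \;\ge\; \tfrac{1}{3}\binom{n}{2}\delta_2(G) \;\ge\; \tfrac{n^3}{24} - o(n^3). \]
On the other, the double-counting/Cauchy--Schwarz argument already recorded in the proof of Proposition~\ref{proposition: upper bound on t(n)} (namely~\eqref{equation: bound on C(T)}) gives the universal upper bound $|C(T)| \le \tfrac{n^3}{24} + O(n^2)$ valid for every tournament $T$ on $[n]$. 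Combining these with the lower bound on $|G \cap C(T)|$ from the first step yields
\[ |C(T) \setminus G| \;=\; |C(T)| - |G \cap C(T)| \;\le\; \tfrac{n^3}{24} + O(n^2) - |G| + o(n^3) \;=\; o(n^3). \]
Adding the two bounds gives the claim.

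I would not expect any real obstacle here: all the substantial work is encapsulated in Theorem~\ref{theorem: stability}. The only minor subtlety is to recognise that while $T$ is defined by a different extremal property than the tournament $T'$ supplied by stability (maximising intersection with $G$ rather than minimising edit distance), the scalar comparison $|G \cap C(T)| \ge |G \cap C(T')|$ already suffices to transfer the bound across, without any need to relate the two tournaments structurally.
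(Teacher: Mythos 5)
Your proposal is correct and follows essentially the same route as the paper: both invoke Theorem~\ref{theorem: stability} to obtain $T'$, use the extremal choice of $T$ (maximising $\vert G\cap C(T)\vert$, equivalently minimising $\vert G\setminus C(T)\vert$) to get $\vert G\setminus C(T)\vert=o(n^3)$, and then bound $\vert C(T)\setminus G\vert$ by combining the upper bound $\vert C(T)\vert\le\binom{n}{3}/4+O(n^2)$ from the Cauchy--Schwarz computation with the lower bound $\vert G\vert\ge\binom{n}{3}/4-o(n^3)$ from the codegree hypothesis. The only difference is cosmetic phrasing (intersections versus set differences).
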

\begin{proof}	
By Theorem~\ref{theorem: stability}, there exists a tournament $T'$ such that 
\begin{align*}
\vert G\setminus C(T)\vert\leq \vert G\setminus C(T')\vert \leq \vert G\Delta C(T')\vert = o(n^3).
\end{align*}
The second of the two inequalities in~\eqref{equation: bound on delta2 of C(T)} shows that $C(T)$ and $C(T')$, being tournament constructions, can have at most $\binom{n}{3}/4+O(n^2)$ edges each.
On the other hand, the codegree condition on $G$ tells us that $G$ must have at least $\binom{n}{2}\cdot\frac{n/4-o(n)}3 = \binom{n}{3}/4 - o(n^3)$ edges. Therefore, when $n$ is sufficiently large, we have
\[\vert C(T)\setminus G \vert = \vert C(T)\vert  - \vert G \vert +\vert G\setminus C(T)\vert = o(n^3),\]
and hence also $\vert G \Delta C(T)\vert = o(n^3)$.
\end{proof}
\noindent Next, we show that $T$ must be a quasirandom tournament.
\begin{claim}\label{claim: T is qr}
$T$ is $o(1)$-quasirandom.
\end{claim}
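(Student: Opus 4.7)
The strategy is to reduce the claim to the tournament quasirandomness criterion of Coregliano and Razborov that was already invoked in the proof of Theorem~\ref{thm:flag}: it suffices to show that all but $o(n^2)$ arcs of $T$ are contained in at least $n/4 - o(n)$ cyclically oriented triangles. I plan to derive this directly from Claim~\ref{claim: T is close to G} together with the codegree hypothesis on $G$.

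First, since each edge of $G \Delta C(T)$ lies in exactly three vertex-pairs, Claim~\ref{claim: T is close to G} gives
\[ \sum_{\{x,y\} \in [n]^{(2)}} |G_{xy} \Delta (C(T))_{xy}| \;=\; 3\,|G \Delta C(T)| \;=\; o(n^3). \]
By Markov's inequality, all but $o(n^2)$ pairs $\{x,y\}$ satisfy $|G_{xy} \Delta (C(T))_{xy}| = o(n)$. For any such pair, the hypothesis $d_G(x,y) \geq n/4 - o(n)$ combined with
\[ |(C(T))_{xy}| \;\geq\; |G_{xy} \cap (C(T))_{xy}| \;=\; |G_{xy}| - |G_{xy} \setminus (C(T))_{xy}| \;\geq\; n/4 - o(n) \]
yields $|(C(T))_{xy}| \geq n/4 - o(n)$.

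Now observe that $|(C(T))_{xy}|$ is exactly the number of cyclically oriented triangles in $T$ containing both $x$ and $y$, which equals $C_T(x,y)$ when $\vec{xy}\in T$ and $C_T(y,x)$ otherwise. Therefore all but $o(n^2)$ arcs of $T$ lie in at least $n/4 - o(n)$ cyclic triangles, which is precisely the per-arc condition that was shown in the last paragraph of the proof of Theorem~\ref{thm:flag} to imply $o(1)$-quasirandomness via \cite[Theorem~3.2]{CoreglianoRazborov17}. I do not anticipate any real obstacle here: the substantive work, namely that $G$ is close to \emph{some} tournament construction, has already been done in Theorem~\ref{theorem: stability} and Claim~\ref{claim: T is close to G}, and the step above is a short bookkeeping argument converting the global edit-distance bound into per-arc cyclic-triangle information so that the Coregliano--Razborov criterion can be applied.
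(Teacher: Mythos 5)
Your proposal is correct and follows essentially the same route as the paper: both convert $\vert G\Delta C(T)\vert=o(n^3)$ plus the codegree hypothesis into the statement that all but $o(n^2)$ pairs $\{x,y\}$ have $C_T(x,y)\geq n/4-o(n)$, and then invoke \cite[Theorem~3.2]{CoreglianoRazborov17}. The only cosmetic difference is that the paper also uses $\vert G\vert=\binom{n}{3}/4+o(n^3)$ to state the two-sided bound $\vert n/4-C_T(x,y)\vert=o(n)$ for almost all pairs, whereas you stop at the one-sided bound, which matches the per-arc condition under which the same theorem is applied in the proof of Theorem~\ref{thm:flag}.
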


\begin{proof}
Since $\vert G \Delta C(T)\vert = o(n^3)$, $|G|={n \choose 3}/4 + o(n^3)$ and $\delta_2(G)=n/4 - o(n)$, there are only $o(n^2)$ pairs of vertices $x$ and $y$ that have $\vert n/4 - C_T(x,y) \vert = \Omega(n)$. Therefore, $T$ is $o(1)$-quasirandom by~\cite[Theorem~3.2]{CoreglianoRazborov17}.
\end{proof}
\noindent Let $B:=G\setminus C(T)$, and $M:=C(T)\setminus G$ be the $3$-graphs consisting of the \emph{bad} and \emph{missing} triples, respectively. Our aim is to show that $|M| \ge \vert B\vert $. Before we can do so, we need to prove some auxiliary results on the degrees and codegrees in $B$ and $M$.

Fix an arbitrary vertex $x\in V(G)$, and partition the arcs in $T$ corresponding to the pairs from $B_x$ into three oriented graphs as follows (see also Figure~\ref{fig:B- B+ B-+}):
\begin{figure}

\hfill\foreach \n in {23,24,25}{ \includegraphics[page=\n]{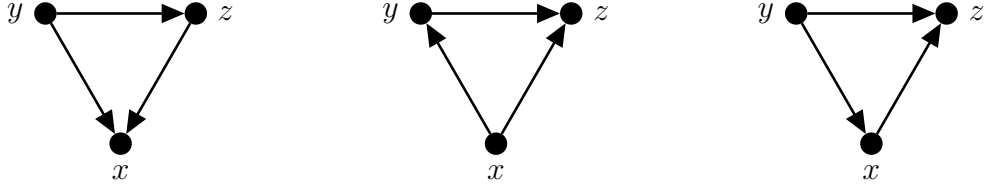}\hfill}

\caption{The arcs $\vec{yz}$ in the sets $B_x^-, B_x^+$ and $B_x^\mp$.}

\label{fig:B- B+ B-+}

\end{figure}
\begin{eqnarray*}
	B_x^-&:=&\{\vec{yz}\in T:\  xyz\in G,\ \{y,z\}\subseteq N_T^-(x) \},\\
	B_x^+&:=& \{\vec{yz}\in T:\  xyz\in G,\ \{y,z\}\subseteq N_T^+(x)\},\hbox{ and}\\
	B_x^{\mp} &:= & \{\vec{yz}\in T: \  xyz \in G,\ y \in N_T^-(x),\ z\in N_T^+(x)\}.
\end{eqnarray*}
\noindent Our aim in the next few claims is to show that all these three oriented graphs contain only $o(n^2)$ arcs, which (since $x$ is arbitrary) will imply that all links graphs in $B$ are sparse (Corollary~\ref{corollary:DeltaG-CT}). Let $X^-$ be the collection of those vertices $y\in N_T^-(x)$ that have out-degree $\Omega(n)$ in $B_x^-$.
\begin{claim}\label{claim:  Yx- small}
$\vert X^-\vert = o(n)$.
\end{claim}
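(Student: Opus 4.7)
The plan is to argue by contradiction. Suppose $|X^-|\ge\eta n$ for some fixed $\eta>0$ (passing to a subsequence of $n$ if needed). For each $y\in X^-$, denote the ``gain'' set $W_y^g:=G_{xy}\cap N_T^+(y)\cap N_T^-(x)$; these are precisely the out-neighbors of $y$ in $B_x^-$, so $|W_y^g|\ge\epsilon n$ for some $\epsilon>0$. Applying the single-arc reversal inequality~\eqref{eq:Reverse} to the arc $\vec{yx}\in T$ produces the matching ``loss'' set $W_y^\ell:=G_{xy}\cap N_T^+(x)\cap N_T^-(y)$ with $|W_y^\ell|\ge|W_y^g|\ge\epsilon n$. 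Note that $W_y^g\subseteq N_T^-(x)$ while $W_y^\ell\subseteq N_T^+(x)$, so these sets are disjoint and $W_y^\ell\cap X^-=\emptyset$.

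Next I would use $K_4^-$-freeness to get a rigid structural constraint. For every $y\in X^-$ and every pair $(z,w)\in W_y^g\times W_y^\ell$, the two triples $xyz$ and $xyw$ are both edges of $G$, so $K_4^-$-freeness forces $xzw\notin G$ and $yzw\notin G$. Consequently the link graph $G_x$ has \emph{no} edges between $W_y^g$ and $W_y^\ell$, and the union $W_y^g\cup W_y^\ell$ (of size $\ge 2\epsilon n$) is an independent set in the triangle-free link graph $G_y$.

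To turn the above structure into a contradiction, I would consider the tournament $T^*$ obtained from $T$ by simultaneously reversing every arc $\vec{yx}$ with $y\in X^-$. A short case analysis of triples $\{x,y,z\}$ — using in particular that every triangle $\{x,y,y'\}$ with $y,y'\in X^-$ is transitive in both $T$ and $T^*$, and that triples not meeting $x$ or having both other vertices outside $X^-$ are unaffected — yields
\[|G\cap C(T^*)|-|G\cap C(T)|\;=\;\sum_{y\in X^-}\bigl(|W_y^g\setminus X^-|-|W_y^\ell|\bigr),\]
and this must be $\le 0$ by the global optimality of $T$.

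The main obstacle, and the delicate step, is to close the contradiction by combining this multi-reversal identity with the bipartite forbidden structure from the $K_4^-$-argument and the codegree lower bound $\delta_2(G)\ge n/4-o(n)$. My plan is to double count edges of the link graph $G_x$ between $X^-$ and $N_T^+(x)$: on the one hand, $\sum_{y\in X^-}|W_y^\ell|\ge|X^-|\cdot\epsilon n\ge\eta\epsilon n^2$ produces $\Omega(n^2)$ such edges of a specific ``cyclic'' form; on the other hand, the many forbidden bipartite pairs $W_y^g\times W_y^\ell\subseteq\overline{G_x}$ supplied by $K_4^-$-freeness, together with the stability input $|G\Delta C(T)|=o(n^3)$ from Claim~\ref{claim: T is close to G} and the $o(1)$-quasirandomness of $T$ from Claim~\ref{claim: T is qr}, should yield an incompatible upper bound on the same quantity, ruling out $|X^-|\ge\eta n$.
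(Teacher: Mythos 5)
Your first half coincides with the paper's argument: applying \eqref{eq:Reverse} to the arc $\vec{yx}$ does give $\vert W_y^\ell\vert=\vert (G\cap C(T))_{xy}\vert\ge\vert W_y^g\vert=\Omega(n)$, and $K_4^-$-freeness does forbid both $xzw$ and $yzw$ for $(z,w)\in W_y^g\times W_y^\ell$. The gap is in the closing step, which you do not carry out and which, as designed, cannot be closed from the inputs you list. The simultaneous-reversal identity, though its case analysis is correct, is vacuous: since $\vert W_y^\ell\vert\ge\vert W_y^g\vert\ge\vert W_y^g\setminus X^-\vert$ holds termwise by \eqref{eq:Reverse}, the inequality $\sum_{y\in X^-}\bigl(\vert W_y^g\setminus X^-\vert-\vert W_y^\ell\vert\bigr)\le 0$ adds nothing. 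More importantly, your final double count lives entirely in the link of the single fixed vertex $x$: the forbidden pairs $W_y^g\times W_y^\ell\subseteq\overline{G_x}$ concern only the $O(n^2)$ triples through $x$, and at that scale the global bound $\vert G\Delta C(T)\vert=o(n^3)$ of Claim~\ref{claim: T is close to G} gives no leverage; control of $G_x$ versus $C(T)_x$ (equivalently of $\vert B_x\vert$ and $\vert M_x\vert$) is exactly Corollary~\ref{corollary:DeltaG-CT} and Claim~\ref{claim: T is almost balanced}, which come \emph{after} the present claim and use it. Nor is there any tension to exploit: even for $G=C(T)$ with $T$ quasirandom, about half of all pairs between $N_T^-(x)$ and $N_T^+(x)$ are non-edges of the link of $x$, so exhibiting $\Omega(n^2)$ non-edges of $G_x$ across that bipartition contradicts nothing, and no upper bound on $e_{G_x}(X^-,N_T^+(x))$ incompatible with $\eta\epsilon n^2$ follows from quasirandomness plus the codegree condition.

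The contradiction must be extracted from triples through $y$, not through $x$, and you were one step away from it. For $(z,w)\in W_y^g\times W_y^\ell$ you have $\vec{yz},\vec{wy}\in T$; by the $o(1)$-quasirandomness of $T$ (Claim~\ref{claim: T is qr}), at least $\tfrac12\vert W_y^g\vert\,\vert W_y^\ell\vert-o(n^2)=\Omega(n^2)$ of these pairs are oriented as $\vec{zw}$, so $yzw$ is a cyclically oriented triangle of $T$. Since you already showed $yzw\notin G$, each such pair yields $yzw\in M=C(T)\setminus G$, i.e.\ every $y\in X^-$ lies in $\Omega(n^2)$ missing triples. These triples contain $y$, so each is produced by at most three choices of $y$, whence $\vert X^-\vert\cdot\Omega(n^2)=O(\vert M\vert)=o(n^3)$ and $\vert X^-\vert=o(n)$ directly, with no contradiction hypothesis, no multi-arc reversal, and no link-of-$x$ count needed. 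This is precisely the paper's proof; your write-up contains its ingredients but substitutes a final count that does not close.
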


\begin{proof}
Fix $y\in X^-$ and let $Y^+:=N^+_{B_x^-}(y)$ denote the out-neighbourhood of $y$ in $B_x^-$. We now consider the effect of reversing the orientation of $\vec{yx}\in T$. Let $T^R$ be the resulting tournament.
For each $w\in Y^+$, the triple $xyw$ now belongs to $G\cap C(T^R)$, hence $\vert (G\cap C(T))_{xy} \vert \ge \vert Y^+\vert = \Omega(n)$ by~\eqref{eq:Reverse}.

Clearly, the joint neighbourhood $(G\cap C(T))_{xy}$ is a subset of $N_T^+(x) \cap N_T^-(y)$. Also note that $Y^+ \subseteq N^-_T(x) \cap N^+_T(y)$.
Since both $Y^+$ and $(G\cap C(T))_{xy}$ have sizes $\Omega(n)$, the $o(1)$-quasirandomness of $T$ (see~\cite[Theorem~1 (P8)]{ChungGraham91}) yields that
\[\frac12 \cdot \vert Y^+\vert \cdot \vert (G\cap C(T))_{xy} \vert  - o(n^2) = \Omega(n^2)\]
pairs $(w, z)\in Y^+ \times (G\cap C(T))_{xy}$ are oriented as $\vec{wz}$ in $T$.

For each such a pair, we have that $ywz \in C(T)$. On the other hand, $\vec{yw} \in B^-_x$ and thus $xyw \in G$. Since $xyz \in G$ and $G$ is $K_4^-$-free, we have $ywz \in C(T)\setminus G = M$. We conclude that each vertex $y\in X^-$ is incident to at least $\Omega(n^2)$ missing triples of this form. Summing over all $y\in X^-$, we have
\[ \vert X^- \vert \times \Omega(n^2) = \left\vert\{ywz \in M: xyz \in G \cap C(T),\ \vec{yw}\in B_x^-, \ y \in X^-\}\right\vert  \leq \vert G\Delta C(T)\vert = o(n^3),\]
which, after division by $\Omega(n^2)$, yields the claimed bound $\vert X^- \vert = o(n)$.
\end{proof}
A symmetric argument yields that $|X^+|=o(n)$, where $X^+$ is the set of vertices $z\in N_T^+(x)$ with in-degree at least $\Omega(n)$ in $B_x^+$.

By definition, every vertex outside of $X^-\cup X^+$ has out-degree $o(n)$ in $B_x^-$ and in-degree $o(n)$ in $B_x^+$. Therefore, the number of arcs in $B^-_x$ and in $B^+_x$ is $o(n^2)$.
\begin{corollary}\label{corollary: bx- and bx+ are small}
$\vert B_x^-\vert = o(n^2)$ and $\vert B_x^+\vert = o(n^2)$.\qed
\end{corollary}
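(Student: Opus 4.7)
The plan is to deduce the Corollary directly from Claim~\ref{claim:  Yx- small} and its analogue for $X^+$, using a simple vertex-partition double count on the arc sets. Writing $B_x^-$ as the disjoint union of arcs whose tail lies in $X^-$ and arcs whose tail lies in $N_T^-(x)\setminus X^-$, the first part contains at most $\vert X^-\vert\cdot(n-1)$ arcs, which is $o(n)\cdot n=o(n^2)$ by Claim~\ref{claim:  Yx- small}. For the second part, by the very definition of $X^-$ (as the set of vertices in $N_T^-(x)$ with out-degree $\Omega(n)$ in $B_x^-$), every tail $y\notin X^-$ contributes out-degree $o(n)$, so this part totals at most $n\cdot o(n)=o(n^2)$. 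Summing the two bounds gives $\vert B_x^-\vert=o(n^2)$.

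For $\vert B_x^+\vert$, I would apply the symmetric argument: let $X^+\subseteq N_T^+(x)$ denote those vertices with in-degree $\Omega(n)$ in $B_x^+$. Reversing the role of $y$ and $z$ in the proof of Claim~\ref{claim:  Yx- small} (i.e.\ reversing the arc $\vec{xz}$ for $z\in X^+$ instead of $\vec{yx}$), exactly the same $K_4^-$-freeness plus quasirandomness argument produces $\Omega(n^2)$ triples in $M$ associated to each $z\in X^+$, whence $\vert X^+\vert=o(n)$. Partitioning the arcs of $B_x^+$ according to whether the head lies in $X^+$ or not then yields $\vert B_x^+\vert=o(n^2)$ by the same estimate as above. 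The only step requiring care is confirming that the arc-reversal and quasirandomness argument from Claim~\ref{claim:  Yx- small} goes through symmetrically, but this is essentially bookkeeping since the setup is invariant under swapping $N_T^-(x)$ with $N_T^+(x)$; no genuinely new obstacle arises.
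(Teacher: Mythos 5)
Your proposal is correct and follows essentially the same route as the paper: the paper likewise establishes $\vert X^+\vert = o(n)$ by the symmetric arc-reversal argument and then bounds $\vert B_x^-\vert$ and $\vert B_x^+\vert$ by splitting arcs according to whether their relevant endpoint lies in $X^-$ (resp.\ $X^+$), exactly as you do. No substantive difference.
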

\noindent We now turn out attention to the cross-arcs $B_x^{\mp}$. We begin with the following simple claim: 
\begin{claim}\label{claim: bad cross edges imply many missing edges}
For every $\vec{yz} \in B_x^{\mp} $ and every $w$ such that $\vec{zw}, \vec{wy}\in T$,  at least one of the triples $wyz, wxy, wxz$ is in $M$. 
\end{claim}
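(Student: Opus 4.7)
The plan is a short local argument on the four vertices $\{w,x,y,z\}$, driven by the orientations forced by the hypothesis and by the $K_4^-$-freeness of $G$. First I would record the $T$-orientation of the six pairs on $\{w,x,y,z\}$ that we already know: by the definition of $B_x^{\mp}$, the arcs $\vec{yx}$, $\vec{xz}$ and $\vec{yz}$ all lie in $T$, so $\{x,y,z\}$ is a transitive triangle in $T$ (in particular $xyz \notin C(T)$, but $xyz \in G$ by assumption); by the hypothesis on $w$, the arcs $\vec{yz}$, $\vec{zw}$, $\vec{wy}$ all lie in $T$, so $\{y,z,w\}$ is cyclically oriented and hence $wyz \in C(T)$.

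Next I would split into two cases depending on whether $wyz \in G$. If $wyz \notin G$ then $wyz \in C(T)\setminus G = M$ and we are done. Otherwise both $xyz$ and $wyz$ belong to $G$, and they share the pair $\{y,z\}$. Any third triple on $\{w,x,y,z\}$ added to $G$ would produce three triples on four vertices, i.e.\ a copy of $K_4^-$; since $G$ is $K_4^-$-free, this forces $wxy \notin G$ and $wxz \notin G$.

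It then remains to show that at least one of $wxy$, $wxz$ lies in $C(T)$, which I would handle by looking at the single undetermined edge $\{w,x\}$ in $T$. If $\vec{xw}\in T$, then together with the known arcs $\vec{wy}$ and $\vec{yx}$ we obtain the cyclic triangle $x\to w\to y\to x$, so $wxy\in C(T)$. If instead $\vec{wx}\in T$, then together with $\vec{xz}$ and $\vec{zw}$ we obtain the cyclic triangle $w\to x\to z\to w$, so $wxz\in C(T)$. In either case the relevant triple lies in $C(T)\setminus G = M$, completing the proof.

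There is no real obstacle here: the argument is a purely combinatorial bookkeeping on four vertices with no quantitative estimation, made possible by the fact that $G$ and $C(T)$ are both $K_4^-$-free triple systems and their discrepancies are tightly controlled by the tournament orientation of pairs. The only care needed is to perform the case split on whether $wyz\in G$ first, so that the $K_4^-$-freeness bites on the pair $\{y,z\}$ before analysing the orientation of $\{w,x\}$.
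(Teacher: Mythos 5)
Your proof is correct and follows essentially the same route as the paper: both observe that $wyz\in C(T)$, that one of $wxy,wxz$ lies in $C(T)$ according to the orientation of $\{w,x\}$, and then invoke $K_4^-$-freeness of $G$ together with $xyz\in G$ to conclude one of these $C(T)$-triples is missing from $G$. The only difference is presentational (you case-split on $wyz\in G$ first, the paper on the orientation of $\{w,x\}$), which changes nothing of substance.
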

\begin{proof}
Clearly, $wyz \in C(T)$. If $\vec{xw}\in T$ then also $wxy \in C(T)$, while if $\vec{wx} \in T$ then $wxz \in C(T)$.
Either way, the set $xyzw$ induces two triples in $C(T)$.
Since $xyz\in  G\setminus C(T)$ and $G$ is $K_4^-$-free, at least one of $wyz, wxy, wxz$ must be in $C(T) \setminus G = M$.
\end{proof}
Let $X_2^-$ be the collection of $y\in N_T^-(x)$ having a linear out-degree in $B_x^{\mp}$, i.e., $d^+_{B_x^{\mp}}(y) = \Omega(n)$. We shall focus on `typical' elements of $X_2^-$ by moving to a subset $Y\subseteq X_2^-$, where $y\in Y$ if and only if the following three properties are satisfied:
\begin{enumerate}[(i)]
\item $y$ is incident with $o(n)$ arcs in $B_x^- \cup B_x^+$,

\item $\vert d^+(y, N_T^+(x))- d^-(y, N_T^+(x))\vert = o(n)$, and

\item  $\vert M_y\vert = o(n^2)$.
\end{enumerate}
\noindent First of all, we show that the set $Y$ is not much smaller than $X_2^-$.
\begin{claim}\label{claim: Y and X2- have same size}
	$\vert Y\vert = \vert X_2^- \vert - o(n)$.
\end{claim}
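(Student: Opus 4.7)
The plan is to show that each of the three defining properties (i), (ii), (iii) of $Y$ is satisfied by all but $o(n)$ elements of $X_2^-$, and then conclude by a union bound. Each failure will be shown to require a linear deviation from a global quantity already known to be $o(n^2)$ or $o(n^3)$, so averaging forces the exceptional sets to be of size $o(n)$.

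First I would handle property (i). By Corollary~\ref{corollary: bx- and bx+ are small} we have $|B_x^-|+|B_x^+|=o(n^2)$, so $\sum_{y\in[n]} d_{B_x^-\cup B_x^+}(y) = o(n^2)$. Hence the number of vertices with $\Omega(n)$ incident arcs in $B_x^-\cup B_x^+$ is $o(n)$; made quantitative, if we declare (i) to mean ``$y$ is incident to at most $\eta(n)\cdot n$ such arcs'' for a suitable $\eta(n)=o(1)$ tending to $0$ sufficiently slowly, then at most $o(n)$ vertices fail. Next, for property (ii), I would invoke the $o(1)$-quasirandomness of $T$ established in Claim~\ref{claim: T is qr}: taking $X=[n]$ and $Y=N_T^+(x)$ in the definition gives $\sum_{y}\bigl|d^+(y,N_T^+(x))-d^-(y,N_T^+(x))\bigr|=o(n^2)$, whence at most $o(n)$ vertices $y$ have this discrepancy linear in $n$. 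Finally, for property (iii), Claim~\ref{claim: T is close to G} gives $|M|\le|G\Delta C(T)|=o(n^3)$, so $\sum_{y} |M_y|=3|M|=o(n^3)$, and at most $o(n)$ vertices $y$ can have $|M_y|=\Omega(n^2)$.

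Combining the three bounds, at most $o(n)$ vertices of $N_T^-(x)$ fail at least one of (i), (ii), (iii). Since $Y$ is obtained from $X_2^-$ by removing exactly the vertices that fail one of these properties, we conclude $|Y|\ge |X_2^-|-o(n)$, and as $Y\subseteq X_2^-$ we have equality up to $o(n)$, proving the claim.

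There is no real obstacle here: the whole argument is a standard first-moment/averaging exercise, and the only minor subtlety is that the three ``$o(n)$'' quantifiers in the properties must be interpreted as ``at most $\eta(n)\cdot n$'' (and similarly for (iii) as ``at most $\eta(n)\cdot n^2$'') for a single $\eta(n)=o(1)$ chosen to dominate all three error terms simultaneously; this is routine and does not affect the conclusion.
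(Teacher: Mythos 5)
Your proposal is correct and matches the paper's proof essentially line for line: property (i) via Corollary~\ref{corollary: bx- and bx+ are small} and averaging, property (ii) via the $o(1)$-quasirandomness of $T$ from Claim~\ref{claim: T is qr}, and property (iii) via $|M|=o(n^3)$ from Claim~\ref{claim: T is close to G}, followed by a union bound. The remark about choosing a single $\eta(n)=o(1)$ to make the three asymptotic quantifiers uniform is a fine (implicit in the paper) bookkeeping point and changes nothing.
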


\begin{proof}
By Corollary~\ref{corollary: bx- and bx+ are small}, only $o(n)$ vertices of $X_2^-$ can be incident to linearly many arcs in $B_x^- \cup B_x^+$ (in fact, the vertices in $X_2^-$ are by definition incident to no arc in $B_x^+$). Further, the $o(1)$-quasirandomness of $T$ yields that only $o(n)$ of vertices of $X_2^-$ can fail to satisfy Property (ii). Finally, $|M|=o(n^3)$ by Claim~\ref{claim: T is close to G}, hence there can be only $o(n)$ vertices $y \in [n]$ with a quadratic degree in~$M$, i.e., having $|M_y| = \Omega(n^2)$.
\end{proof}
For each $y\in Y$, let $Z_{xy}$ be the collection of vertices $z\in N_T^+(x)$ such that $\vec{yz}\in B^{\mp}(x)$.
Note that $|Z_{xy}| = \Omega(n)$ by definition of $X_2^-\supseteq Y$.
Next, we define $W_{xy}$ to be the collection of $w\in N_T^+(x)$ such that $\vec{wy}\in T$.
By definition, $W_{xy} = C(T)_{xy}$.
Moreover, Property (ii) of $y\in Y$ yields that $\vert W_{xy} \vert \ge d^+_{B_x^{\mp}}(y) - o(n) = \Omega(n)$.

By the $o(1)$-quasirandomness of $T$, there are $\Omega(n^2)$ pairs $(w,z) \in W_{xy} \times Z_{xy}$ that are oriented as $\vec{zw}$ in $T$. For such pairs, Claim~\ref{claim: bad cross edges imply many missing edges} yields that  $\vert\{wxy,wyz\} \cap M\vert\ge1$. We use this fact to prove that the codegree of $xy$ in $M$ must be large.
\begin{claim}\label{claim: missing degree of (x,y) huge}
$\vert M_{xy}\vert \geq \vert W_{xy}\vert - o(n)$  for every $y\in Y$.
\end{claim}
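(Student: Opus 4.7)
The plan is to reduce the claim to showing that $W'_{xy}:=W_{xy}\setminus M_{xy}$ has size $o(n)$. First I would verify that $M_{xy}\subseteq W_{xy}$: if $wxy\in C(T)$ then, given $\vec{yx}\in T$, the only cyclic orientation on $\{x,y,w\}$ forces $w\in N^+_T(x)$ and $\vec{wy}\in T$, i.e.\ $w\in W_{xy}$. Consequently $|M_{xy}|=|W_{xy}|-|W'_{xy}|$, and the target bound follows once $|W'_{xy}|=o(n)$ is established.

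The core of the argument is a double-count that converts each $w\in W'_{xy}$ into elements of the link $M_y$. For any $w\in W'_{xy}$ and any $z\in Z_{xy}$ with $\vec{zw}\in T$, the paragraph preceding this claim already notes that Claim~\ref{claim: bad cross edges imply many missing edges} yields $|\{wxy,wyz\}\cap M|\ge 1$ (the third option $wxz\in M$ is ruled out because $w\in N^+_T(x)$ together with $\vec{zw}$ makes $\{x,z,w\}$ transitive in $T$, so $wxz\notin C(T)$). Since the membership $w\in W'_{xy}$ excludes $wxy\in M$, we must have $wyz\in M$, i.e.\ $\{w,z\}\in M_y$. Moreover, $W_{xy}$ and $Z_{xy}$ are disjoint, as a common vertex $v$ would satisfy both $\vec{vy}$ and $\vec{yv}$; hence distinct pairs $(w,z)$ give rise to distinct link-pairs $\{w,z\}$ in $M_y$.

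To finish I would apply the $o(1)$-quasirandomness of $T$ (Claim~\ref{claim: T is qr}) to the sets $W'_{xy}$ and $Z_{xy}$, combined with the identity $d^+_T(w,Z_{xy})+d^-_T(w,Z_{xy})=|Z_{xy}|$ (valid since $w\notin Z_{xy}$), to obtain
\[
\sum_{w\in W'_{xy}} d^-_T(w,Z_{xy}) \;\ge\; \tfrac12\,|W'_{xy}|\,|Z_{xy}| - o(n^2).
\]
The left-hand side counts exactly the pairs $(w,z)\in W'_{xy}\times Z_{xy}$ with $\vec{zw}\in T$, which by the previous paragraph injects into $M_y$; hence, using property~(iii) of $Y$, this sum is at most $|M_y|=o(n^2)$. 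Combined with $|Z_{xy}|=\Omega(n)$ (guaranteed by $y\in X_2^-$), this forces $|W'_{xy}|=o(n)$, completing the plan. No single step seems likely to pose a serious obstacle; the only bit that requires care is confirming the injectivity of $(w,z)\mapsto \{w,z\}$, which ultimately rests on the disjointness of $W_{xy}$ and $Z_{xy}$ that falls out of their definitions.
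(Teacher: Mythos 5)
Your proposal is correct and takes essentially the same route as the paper: the key input in both is that any pair $(w,z)\in W_{xy}\times Z_{xy}$ with $\vec{zw}\in T$ forces $wxy\in M$ or $wyz\in M$ (with $wxz$ excluded since $\{x,w,z\}$ is transitive), combined with the $o(1)$-quasirandomness of $T$, $\vert Z_{xy}\vert=\Omega(n)$, and the bound $\vert M_y\vert=o(n^2)$ from Property (iii) of $Y$. The only difference is bookkeeping: you aggregate the count of such pairs globally against $\vert M_y\vert$ to bound $\vert W_{xy}\setminus M_{xy}\vert$, whereas the paper argues per vertex $w$, discarding the $o(n)$ atypical vertices of $W_{xy}$ and showing each remaining $w$ lies in $M_{xy}$.
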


\begin{proof}
Given a vertex $y \in Y$, let $W'_{xy} \subseteq W_{xy}$ consist of those vertices $w\in W_{xy}$ satisfying:
\begin{enumerate}[(a)]
	\item $\vert M_{wy}\vert =o(n)$ and
	\item $d_T^-(w, Z_{xy})\ge {\vert Z_{xy}\vert}/{2} - o(n)$.
\end{enumerate}
By Property (iii) of $Y$, only $o(n)$ vertices in $W_{xy}$ can have linear degree in $M_y$. Further, the $o(1)$-quasirandomness of $T$ yields that only $o(n)$ vertices in $W_{xy}$ have in-degree from $Z_{xy}$ below $\vert Z_{xy}\vert/2 - o(n)$. Hence, $|W'_{xy}| = |W_{xy}| - o(n)$.

However, for each $w\in W'_{xy}$, there are $\Theta(\vert Z_{xy}\vert) = \Omega(n)$ vertices $z\in Z_{xy}$ such that we have both $\vec{zw}\in T$ (by (b)) and $wyz\notin M$ (by (a)). Therefore, $wxy\in M$ by Claim~\ref{claim: bad cross edges imply many missing edges} (or more precisely the consequence of Claim~\ref{claim: bad cross edges imply many missing edges} noted above Claim~\ref{claim: missing degree of (x,y) huge}), and thus $\vert M_{xy}\vert \geq \vert W'_{xy}\vert = \vert W_{xy}\vert - o(n)$ as claimed.
\end{proof}
\noindent As the next step in the proof, we show that most of the codegree of such pairs $xy$, where $y\in Y$, lies inside $B$.
\begin{claim}\label{claim: bad degree of (x,y) from Y are huge}
  $\vert B_{xy} \cap N_T^+(x) \vert = d(x,y) - o(n)$ for every $y\in Y$.
\end{claim}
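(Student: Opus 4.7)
The plan is to split the codegree $d(x,y) = |G_{xy}|$ into three pieces based on where the third vertex $z$ sits relative to $x$ and whether the triple $xyz$ is good (in $C(T)$) or bad, and then to show that two of these three pieces have size $o(n)$. Since $y \in N_T^-(x)$, any cyclically oriented triangle on $\{x,y,z\}$ with $\vec{yx} \in T$ must use the arcs $\vec{xz}$ and $\vec{zy}$; hence $C(T)_{xy} = W_{xy} \subseteq N_T^+(x)$. Combining this with the partition $V(G)\setminus\{x,y\} = N_T^+(x) \sqcup N_T^-(x)$ and the decomposition $G = (G\cap C(T)) \sqcup B$ yields
\[ G_{xy} = (G \cap C(T))_{xy} \;\sqcup\; (B_{xy} \cap N_T^+(x)) \;\sqcup\; (B_{xy} \cap N_T^-(x)).\]
The target lower bound $|B_{xy} \cap N_T^+(x)| \ge d(x,y) - o(n)$ then reduces to bounding the other two summands by $o(n)$, and the matching upper bound $|B_{xy} \cap N_T^+(x)| \le d(x,y)$ is trivial.

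For the first summand, observe that $(G\cap C(T))_{xy} = W_{xy} \setminus M_{xy}$. Claim~\ref{claim: missing degree of (x,y) huge}, which applies precisely because $y \in Y$, gives $|M_{xy}| \ge |W_{xy}| - o(n)$, and therefore $|(G\cap C(T))_{xy}| = o(n)$. For the third summand, take $z \in B_{xy} \cap N_T^-(x)$, so that $xyz \in G$ with $y, z \in N_T^-(x)$. Whichever orientation $T$ assigns to the edge $\{y,z\}$, the corresponding arc satisfies the defining conditions of $B_x^-$ and is incident to $y$. Property~(i) of the membership of $y$ in $Y$ then forces the number of such vertices $z$ to be $o(n)$.

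Combining these two estimates immediately yields $|B_{xy} \cap N_T^+(x)| = d(x,y) - o(n)$. The whole argument is essentially a bookkeeping exercise given Claim~\ref{claim: missing degree of (x,y) huge} and Property~(i) of $Y$; I do not anticipate any substantial obstacle. The only point one needs to check carefully is the observation $C(T)_{xy} \subseteq N_T^+(x)$, but this is immediate from the fixed orientation $\vec{yx} \in T$ together with the definition of a cyclically oriented triangle.
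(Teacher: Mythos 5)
Your proposal is correct and uses exactly the same ingredients as the paper's proof — the identification $C(T)_{xy}=W_{xy}$ together with Claim~\ref{claim: missing degree of (x,y) huge} to show $\vert (G\cap C(T))_{xy}\vert=o(n)$, and Property~(i) of $Y$ to show that $B_{xy}$ has only $o(n)$ vertices in $N_T^-(x)$ — merely organised as an explicit three-way partition of $G_{xy}$ rather than the paper's codegree identity. No gaps; this is essentially the paper's argument.
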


\begin{proof}
Let $y\in Y$. Clearly, $\vert C(T)_{xy}\vert - \vert M_{xy}\vert+\vert B_{xy}\vert = d(x,y)$. Since  $C(T)_{xy}= W_{xy}$, Claim~\ref{claim: missing degree of (x,y) huge} readily yields that $\vert B_{xy}\vert = d(x,y) - o(n)$. Moreover, since $y$ is incident with $o(n)$ arcs in $B^-_x$ and $B^+_x$ (by Property (i) in the definition of $Y$), the vast majority of the codegree of the pair $xy$ must come from arcs incident to $y$ that lie in $B^\mp_x$. Since $y \in N_T^-(x)$, the other endpoints of those arcs are from $N_T^+(x)$.
\end{proof}
\noindent Since most of the codegree of $xy$ is in $B$, $G \cap C(T)$ cannot have many edges containing $xy$.

\begin{corollary}\label{corollary: intersection of xy-degree small}
$\vert(G \cap C(T))_{xy}\vert = o(n)$ for every $y\in Y$.\qed
\end{corollary}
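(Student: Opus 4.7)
The statement is a direct consequence of the preceding Claim~\ref{claim: bad degree of (x,y) from Y are huge}, so the proof will be essentially a one-line deduction. The plan is to observe that $G_{xy}$ partitions into the joint neighbourhood in $B=G\setminus C(T)$ and the joint neighbourhood in $G\cap C(T)$, and then invoke Claim~\ref{claim: bad degree of (x,y) from Y are huge} to show that nearly all of $G_{xy}$ is absorbed by the first part.

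More precisely, by the definition of $B$ we have the disjoint union $G_{xy} = B_{xy} \,\dot\cup\, (G\cap C(T))_{xy}$, so $|(G\cap C(T))_{xy}| = d(x,y) - |B_{xy}|$. From Claim~\ref{claim: bad degree of (x,y) from Y are huge} we already know $|B_{xy}\cap N_T^+(x)| = d(x,y) - o(n)$, and in particular $|B_{xy}| \ge d(x,y) - o(n)$; trivially $|B_{xy}|\le d(x,y)$. Substituting into the identity above yields $|(G\cap C(T))_{xy}| = o(n)$, which is the required bound.

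There is no real obstacle here: the entire content is packaged in Claim~\ref{claim: bad degree of (x,y) from Y are huge}. The only thing to be careful about is simply not to double-count anything — $B_{xy}$ and $(G\cap C(T))_{xy}$ are genuinely disjoint subsets of $G_{xy}$ (since $B$ and $G\cap C(T)$ partition $G$), so the elementary subtraction $|(G\cap C(T))_{xy}| = |G_{xy}| - |B_{xy}|$ is valid and gives the conclusion without any further hypothesis on $y$ beyond $y\in Y$.
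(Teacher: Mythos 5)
Your proof is correct and is exactly the argument the paper intends: the corollary follows from Claim~\ref{claim: bad degree of (x,y) from Y are huge} via the disjoint decomposition $G_{xy}=B_{xy}\,\dot\cup\,(G\cap C(T))_{xy}$, so $\vert(G\cap C(T))_{xy}\vert = d(x,y)-\vert B_{xy}\vert \le d(x,y)-\vert B_{xy}\cap N_T^+(x)\vert = o(n)$. Nothing further is needed.
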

\noindent We are now ready to prove that $|B_x^\mp| = o(n^2)$. By symmetry, it is enough to show that only $o(n)$ vertices from $N_T^-(x)$ can be incident to linearly many edges of $B^\mp_x$.
\begin{claim}\label{claim: few bad cross edges}
$|X_2^-|=o(n)$.
\end{claim}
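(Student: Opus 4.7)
My plan is to argue by contradiction. Suppose $|X_2^-|\geq \eps n$ for some fixed $\eps>0$. Then by Claim~\ref{claim: Y and X2- have same size}, we have $|Y|\geq (\eps-o(1))n$. The strategy is to show that each $y\in Y$ satisfies $|M_y|=\Omega(n^2)$, contradicting property (iii) in the definition of $Y$.

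Fix $y\in Y$. I count ordered tuples $(z,w)$ with $z\in Z_{xy}$ and $w\in N_T^+(z)\cap N_T^-(y)$. By Claim~\ref{claim: bad degree of (x,y) from Y are huge} we have $|Z_{xy}|\geq n/4-o(n)$, and by the $o(1)$-quasirandomness of $T$ (Claim~\ref{claim: T is qr}), $|N_T^+(z)\cap N_T^-(y)|=(1-o(1))n/4$ for all but $o(n)$ values of $z$. Thus the total tuple count is at least $(1-o(1))n^2/16$. By Claim~\ref{claim: bad cross edges imply many missing edges}, each such tuple forces at least one of the triples $wxy$, $wyz$, $wxz$ to lie in $M$. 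The number of tuples with $wxy\in M$ equals $\sum_{w\in M_{xy}}|Z_{xy}\cap N_T^-(w)|$, and by quasirandomness this is at most $(1+o(1))|M_{xy}||Z_{xy}|/2\leq n^2/32+o(n^2)$.

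Subtracting, the number of tuples producing either $wyz\in M$ or $wxz\in M$ must be at least $n^2/32-o(n^2)$ for each $y\in Y$. Summing over $y\in Y$, the $wyz\in M$ contributions total at most $\sum_{y\in Y}2|M_y|=o(|Y|n^2)$ by property (iii), while the $wxz\in M$ contributions can be bounded by a double-counting argument: for each missing triple $\{w,x,z\}\in M$, the number of compatible $y\in Y\cap N_T^-(z)\cap N_T^+(w)$ is $O(n)$ by quasirandomness, so the total $wxz\in M$ count is at most $O(n|M_x|)$. Combining everything yields $|Y|\cdot n^2/32\leq o(|Y|n^2)+O(n|M_x|)$, i.e., $|M_x|\geq \Omega(|Y|n)$.

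The main obstacle is turning this into a contradiction uniformly in $x$. Since $\sum_x|M_x|=3|M|=o(n^3)$ (by Claim~\ref{claim: T is close to G}), we have $|M_x|=o(n^2)$ for all but $o(n)$ vertices $x$, and for such typical $x$ the inequality $|M_x|\geq \Omega(|Y|n)$ forces $|Y|=o(n)$, giving the desired contradiction. The remaining $o(n)$ atypical vertices with $|M_x|=\Omega(n^2)$ demand a more delicate treatment; I expect that this will be handled either by a sharper bookkeeping of the Case-$wxz$ tuples (exploiting the fact that such vertices must have a very specific structure in the link graph of $M$) or by excluding them via a separate averaging argument that shows they cannot in fact arise once the codegree hypothesis and Claim~\ref{claim: T is close to G} are combined.
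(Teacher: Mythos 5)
Your counting strategy is not the paper's (the paper never bounds $|M_x|$ at this stage), and as written it has a genuine gap which you yourself flag: the contradiction needs $|M_x|=o(n^2)$ for the \emph{fixed, arbitrary} vertex $x$, but from $|M|=o(n^3)$ (Claim~\ref{claim: T is close to G}) you only get this for all but $o(n)$ vertices. The per-vertex bound $|M_x|=o(n^2)$ is precisely part of Claim~\ref{claim: T is almost balanced}, which is proved \emph{after} the present claim and whose proof goes through Corollary~\ref{corollary:DeltaG-CT}, i.e.\ through the present claim itself — so it cannot be invoked here without circularity, and the claim really is needed for every $x$ (Corollary~\ref{corollary:DeltaG-CT} and all subsequent claims fix arbitrary vertices or pairs). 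The "atypical $o(n)$ vertices" you defer are therefore the heart of the matter, not a technicality. The paper's proof avoids $M_x$ entirely and instead exploits the extremal choice of $T$: assuming $|X_2^-|=\Omega(n)$, it constructs a symmetric linear-sized set $Z\subseteq N_T^+(x)$, deduces $d_T^\pm(x)=n/2+o(n)$ and that for $y\in Y$ (and $z\in Z$) almost all of the codegree of $xy$ lies in $B$ while $\vert (G\cap C(T))_{xy}\vert=o(n)$ (Claim~\ref{claim: bad degree of (x,y) from Y are huge}, Corollary~\ref{corollary: intersection of xy-degree small}), and then reverses all arcs between $x$ and $Y\cup Z$; quasirandomness gives $\Omega(n^2)$ newly gained triples of $G\cap C(T')$ against only $o(n^2)$ lost ones, contradicting the maximality of $\vert G\cap C(T)\vert$ — an argument that works uniformly for every $x$.

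There is also a quantitative slip in your intermediate step: the bound $n^2/32+o(n^2)$ on the tuples with $wxy\in M$ tacitly assumes $|M_{xy}|\le n/4+o(n)$, equivalently $d_T^+(x)\le n/2+o(n)$, which is not known at this point; on the contrary, Claim~\ref{claim: missing degree of (x,y) huge} says $M_{xy}$ is essentially all of $W_{xy}=N_T^+(x)\cap N_T^-(y)$, whose size is about $d_T^+(x)/2$ and could a priori be close to $n/2$ (in the paper, $d_T^+(x)=n/2+o(n)$ is only derived \emph{inside} the proof of this claim, using both the $Y$-side and the $Z$-side). This step is repairable — the surviving tuples are, up to $o(n^2)$, exactly those with $w\in N_T^-(x)\cap N_T^-(y)$, and this set has size about $d_T^-(x)/2=\Omega(n)$ since $N_T^-(x)\supseteq X_2^-$ is linear under your hypothesis; likewise your estimate $|N_T^+(z)\cap N_T^-(y)|\approx n/4$ requires $d_T^-(y)\approx n/2$, valid only for all but $o(n)$ of the $y\in Y$, not for every fixed $y$. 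But even after these repairs the argument only yields $|M_x|=\Omega(n^2)$, which returns you to the unresolved gap above.
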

\begin{proof}
Suppose for a contradiction that $|X_2^-| = \Omega(n)$. Thus, by Claim~\ref{claim: Y and X2- have same size}, we have $|Y|=\Omega(n)$. By Claim~\ref{claim: bad degree of (x,y) from Y are huge} and Property (i) of $Y$, at least $d(x,y)-o(n)\geq n/4-o(n)$ of the elements $z\in B_{xy}$ correspond to arcs $\vec{yz}\in B^{\mp}_x$ with $z\in N_T^+(x)$. We conclude that there are at least $|Y| \cdot (n/4 - o(n)) = \Omega(n^2)$ arcs from $Y \subseteq N_T^-(x)$ to $N_T^+(x)$.

Let $X_2^+ \subseteq N^+_T(x)$ be the set of vertices with in-degree $\Omega(n)$ in $B_x^\mp$. Clearly, $|X_2^+|=\Omega(n)$. Arguments analogous to those in the proofs of Claims~\ref{claim: Y and X2- have same size}-\ref{claim: bad degree of (x,y) from Y are huge} yield the existence of a subset $Z\subseteq X_2^+$ of size $|X_2^+| - o(n)$ such that every $z\in Z$ satisfies:
\begin{itemize}
\item $\vert d^+(z, N_T^-(x))- d^-(z, N_T^-(x))\vert = o(n)$,
\item $\vert (G \cap C(T))_{xz}\vert = o(n)$, and
\item $\vert M_{xz}\vert \ge  \vert U_{xz}\vert - o(n)$, where $U_{xz}$ is the collection of vertices $u\in N_T^-(x)$ with $\vec{zu}\in T$.
\end{itemize}
Now observe that the Property (ii) of $y\in Y$ can be rewritten as $\vert W_{xy} \vert = \vert N_T^+(x) \vert / 2 - o(n)$. Thus for all $y\in Y$ we have
\begin{align*}\vert N_T^+(x)\vert- \vert B_{xy}\cap N_T^+(x)\vert = \vert N_T^+(x)\setminus B_{xy}\vert  \geq \vert C(T)_{xy}\vert=\vert W_{xy}\vert \geq \frac{\vert N_T^+(x)\vert}2 - o(n)\,.
\end{align*} 
Rearranging the terms and applying Claim~\ref{claim: bad degree of (x,y) from Y are huge} and the codegree assumption yield
\begin{align*}\vert N_T^+(x)\vert\geq 2\vert B_{xy}\cap N_T^+(x)\vert +o(n)\geq 2d(x,y) +o(n)\geq n/2 - o(n).
\end{align*} 
A symmetric argument for a vertex $z \in Z$ yields that $\vert N_T^-(x)\vert \geq n/2 - o(n)$, whence both $N_T^+(x)$ and $N_T^-(x)$ have size $n/2+o(n)$.

By Corollary~\ref{corollary: intersection of xy-degree small}, only $o(n^2)$ pairs $(y,z) \in Y \times N^+_T(x)$ extend $x$ to an edge $xyz \in G \cap C(T)$.
On the other hand, by Property (i) of $Y$, for $y\in Y$ most of the codegree of the pair $xy$ in $B$, which is at least $n/4 - o(n)$ by Claim~\ref{claim: bad degree of (x,y) from Y are huge}, must be from arcs $\vec{yz} \in T$ with $z \in N_T^+(x)$. Since there are $n/4 + o(n)$ such arcs $\vec{yz}$ by Property (ii) of $Y$ and since, as proved above, $\vert N_T^+(x)\vert =n/2+o(n)$, we conclude that all but $o(n)$ vertices from $N_T^+(x)\cap N_T^+(y)$ lie in $B_{xy}$.

Consider now the tournament $T'$ obtained from $T$ by reversing the orientations of all the arcs $\vec{yx}$, where $y\in Y$, and all the arcs $\vec{xz}$, where $z \in Z$. This has the following effect on `good' and `bad' triples:
\begin{enumerate}[(1)]
	\item all the `bad pairs' $yz\in B_x$ with $(y,z) \in Y \times Z$ become `good' pairs, i.e., $xyz \in C(T')\cap G$,
	\item all the `good pairs' $yz\in (C(T)\cap G)_x$ with $y\in Y$ and $z\in N_T^+(x)$ or $y\in N_T^-(x)$ and $z\in Z$ become `bad' pairs with respect to $C(T')$, and
	\item all other `good' triples (i.e.\  all apart those corresponding to pairs from (2) above) from $C(T)\cap G$ remain in $C(T')\cap G$ .
\end{enumerate}
Note that there might be some new missing triples $xyz \in C(T') \setminus (G \cup C(T))$ but they are irrelevant for our argument. Our aim is to show that $\vert G \cap C(T') \vert\ge \vert G \cap C(T) \vert +\Omega(n^2)$ (thereby contradicting our assumption that $T$ maximises $\vert G \cap C(T) \vert $).

Indeed, by $o(1)$-quasirandomness of $T$, there are at least $\vert Y\vert \cdot \vert Z\vert/2 -o(n^2)=\Omega(n^2)$ arcs $\vec{yz}\in T$ with $(y,z) \in Y \times Z$. Moreover, all but $o(n^2)$ of those pairs $yz$ lie in $B_x$.

On the other hand, for all $y\in Y$ and $z\in Z$, Corollary~\ref{corollary: intersection of xy-degree small} and an analogous argument for $Z$ show  $\vert(G \cap C(T))_{xy}\vert =o(n)$ and $\vert(G \cap C(T))_{xz}\vert = o(n)$, and hence
\begin{align*}
	\vert C(T')\cap G\vert -\vert C(T)\cap G\vert &= \Omega(n^2) - \vert Y \vert \cdot \max_{y\in Y}\vert(G \cap C(T))_{xy}\vert - \vert Z \vert \cdot \max_{z\in Z}\vert(G \cap C(T))_{xz}\vert = \Omega(n^2)\ .\end{align*}
This contradicts our assumption that $T$ was a best fit tournament. Thus $\vert X_2^- \vert = o(n)$, as claimed.
\end{proof}
A symmetric argument yields that only $o(n)$ vertices from $N_T^+(x)$ can have a linear degree $B_x^\mp$. Therefore, $\vert B_x^\mp\vert = o(n^2)$. Since our choice of $x$ was arbitrary, this latter result together with Corollary~\ref{corollary: bx- and bx+ are small} allows us to conclude that all the link graphs of $B$ must be sparse.
\begin{corollary}\label{corollary:DeltaG-CT}
For every $x\in [n]$, we have $|B_x| = o(n^2)$. \qed
\end{corollary}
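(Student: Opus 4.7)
The plan is to use the partition of arcs associated with $B_x$ that was introduced earlier. For a fixed vertex $x \in [n]$, every pair $\{y,z\} \in B_x$ corresponds to a triple $xyz \in B = G \setminus C(T)$, and the arc of $T$ on $\{y,z\}$ belongs to exactly one of the three oriented graphs $B_x^-, B_x^+, B_x^\mp$, depending on whether $\{y,z\} \subseteq N_T^-(x)$, $\{y,z\} \subseteq N_T^+(x)$, or the pair straddles the two neighbourhoods. Hence
\[|B_x| \;=\; |B_x^-| + |B_x^+| + |B_x^\mp|,\]
and it suffices to bound each of the three summands on the right by $o(n^2)$.

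The first two summands are handled directly by Corollary~\ref{corollary: bx- and bx+ are small}, which already gives $|B_x^-| = o(n^2)$ and $|B_x^+| = o(n^2)$. So the only remaining work is to show $|B_x^\mp| = o(n^2)$. For this, Claim~\ref{claim: few bad cross edges} tells us that the set $X_2^-$ of vertices $y \in N_T^-(x)$ with $d^+_{B_x^\mp}(y) = \Omega(n)$ has size $|X_2^-| = o(n)$. An entirely symmetric argument, obtained by reversing the roles of in- and out-neighbourhoods of $x$ (equivalently, by reversing the orientation of every arc of $T$ and rerunning Claims~\ref{claim:  Yx- small}--\ref{claim: few bad cross edges}), yields the analogous statement that at most $o(n)$ vertices $z \in N_T^+(x)$ have in-degree $\Omega(n)$ in $B_x^\mp$.

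Now I count the arcs of $B_x^\mp$ by their tails in $N_T^-(x)$. Vertices in $X_2^-$ contribute at most $|X_2^-| \cdot n = o(n) \cdot n = o(n^2)$ arcs, while each vertex of $N_T^-(x) \setminus X_2^-$ has out-degree $o(n)$ in $B_x^\mp$ by definition of $X_2^-$, contributing at most $n \cdot o(n) = o(n^2)$ arcs in total. Summing, $|B_x^\mp| = o(n^2)$, and combining all three estimates yields the claim.

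The only real obstacle in this argument is the bound on the cross arcs $B_x^\mp$, which was the content of Claim~\ref{claim: few bad cross edges} and was the delicate part of the section (relying on the $o(1)$-quasirandomness of $T$ from Claim~\ref{claim: T is qr} and on the extremality of $T$ with respect to $|G \cap C(T)|$). With that claim in hand, the assembly above is a short bookkeeping argument; in particular, because $x$ was arbitrary, the bound $|B_x| = o(n^2)$ immediately holds for every vertex of $[n]$.
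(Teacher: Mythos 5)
Your proof is correct and follows essentially the same route as the paper: decompose $B_x$ into $B_x^-$, $B_x^+$ and $B_x^\mp$, invoke Corollary~\ref{corollary: bx- and bx+ are small} for the first two, and use Claim~\ref{claim: few bad cross edges} (with its symmetric counterpart) to bound the cross arcs, exactly as in the paragraph preceding the corollary. The only cosmetic difference is that your final count of $B_x^\mp$ by tails in $N_T^-(x)$ makes the symmetric statement about in-degrees from $N_T^+(x)$ redundant, which is harmless.
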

\noindent We now show that $T$ must be asymptotically balanced and that $M$ has low vertex-degrees, i.e., that for every vertex $x$ the in-degree and the out-degree of $x$ in $T$ are almost equal, and that there are few triples of $M$ containing $x$. 
\begin{claim}\label{claim: T is almost balanced}
For every $x\in [n]$, $|d_T^+(x) - d_T^-(x)| = o(n)$ and $|M_x| = o(n^2)$.
\end{claim}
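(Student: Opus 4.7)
The plan is to derive both statements simultaneously from three already-established ingredients: (a) the codegree lower bound on $G$, (b) the link-graph bound $|B_x| = o(n^2)$ given by Corollary~\ref{corollary:DeltaG-CT}, and (c) the $o(1)$-quasirandomness of $T$ from Claim~\ref{claim: T is qr}. Fix an arbitrary $x \in [n]$ and set $A := N_T^-(x)$, $B := N_T^+(x)$, so that $|A|+|B| = n-1$. The codegree assumption and the standard identity $\sum_{y\neq x} d_G(x,y) = 2|G_x|$ yield $|G_x| \geq \tfrac{n-1}{2} \cdot (n/4 - o(n)) = n^2/8 - o(n^2)$. Combined with $|B_x| = o(n^2)$, this gives the lower bound $|G_x \cap C(T)_x| \geq n^2/8 - o(n^2)$, and in particular $|C(T)_x| \geq n^2/8 - o(n^2)$.

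Next, observe that $|C(T)_x|$ equals the number of pairs $(y,z) \in A \times B$ with $\vec{zy} \in T$, i.e.\ the number of arcs from $B$ to $A$ in $T$. Applying the quasirandomness of $T$ with the sets $X = B$ and $Y = A$ in the definition, the number of such arcs is $|A||B|/2 + o(n^2)$. Thus $|A||B|/2 \geq n^2/8 - o(n^2)$, so $|A|\cdot|B| \geq n^2/4 - o(n^2)$. Writing $|A| = (n-1)/2 - t$ and $|B| = (n-1)/2 + t$, the maximum of $|A||B|$ is $((n-1)/2)^2 - t^2$, and comparing with the lower bound forces $t^2 = o(n^2)$, i.e.\ $|d_T^+(x) - d_T^-(x)| = 2|t| = o(n)$, as required.

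For the second statement, combine $|M_x| = |C(T)_x| - |G_x \cap C(T)_x|$ with $|G_x \cap C(T)_x| = |G_x| - |B_x|$ to obtain
\[
|M_x| = |C(T)_x| - |G_x| + |B_x|.
\]
The same quasirandom computation shows $|C(T)_x| = |A||B|/2 + o(n^2) \leq (n-1)^2/8 + o(n^2) = n^2/8 + o(n^2)$, while the lower bound $|G_x| \geq n^2/8 - o(n^2)$ from the first paragraph gives $|C(T)_x| - |G_x| = o(n^2)$. Together with $|B_x| = o(n^2)$, this yields $|M_x| = o(n^2)$.

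I expect no serious obstacle: the entire argument is a short calculation once quasirandomness is invoked to translate the structural bound $|C(T)_x| \gtrsim n^2/8$ into a product bound on $|A| \cdot |B|$. The only place requiring minor care is verifying that the quasirandomness property as stated in the paper (the sum $\sum_{x\in X}|d^+(x,Y)-d^-(x,Y)| \leq \delta n^2$) indeed implies that the number of $B\!\to\!A$ arcs is $|A||B|/2 + o(n^2)$; this follows by pairing it with the trivial identity counting total arcs between $A$ and $B$.
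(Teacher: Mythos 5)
Your proposal is correct and follows essentially the same route as the paper: both use $\delta_2(G)\geq n/4-o(n)$ to get $|G_x|\geq n^2/8-o(n^2)$, Corollary~\ref{corollary:DeltaG-CT} for $|B_x|=o(n^2)$, and the $o(1)$-quasirandomness of $T$ to write $|C(T)_x|=\tfrac12 d_T^+(x)d_T^-(x)+o(n^2)\leq n^2/8+o(n^2)$, then conclude via the identity $|G_x|=|C(T)_x|-|M_x|+|B_x|$. You merely spell out explicitly (via the $t$-parametrisation and the arc-count between $N_T^-(x)$ and $N_T^+(x)$) what the paper leaves implicit.
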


\begin{proof}
The link graph $G_x$ has minimum degree $n/4 - o(n)$, which it inherits from $\delta_2(G) = n/4 - o(n)$. In particular, $|G_x| \ge n^2/8 - o(n^2)$. On the other hand, $\vert B_x\vert = o(n^2)$ by Corollary~\ref{corollary:DeltaG-CT}.  Finally, the $o(1)$-quasirandomness of $T$ yields that
\[\vert C(T)_x\vert = \frac{1}{2} \cdot d_T^+(x)(n-1- d_T^+(x))+o(n^2) \le \frac{n^2}8 + o(n^2).\]
Since \[\vert G_x\vert = \vert C(T)_x\vert -\vert M_x\vert +\vert B_x \vert , \]
both parts of the statement follows.
\end{proof}
\noindent We now show that for every pair of distinct vertices $x,y$ the codegree of $xy$ in $B$ is small.
\begin{claim}\label{claim:CodegreeG-CT} 
For every pair of distinct vertices $x,y$ in $[n]$, we have $\vert B_{xy}\vert = o(n)$.
\end{claim}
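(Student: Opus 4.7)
The plan is to argue by contradiction: suppose there exist distinct vertices $x,y \in [n]$ and $\varepsilon > 0$ such that $|B_{xy}| \geq \varepsilon n$, and without loss of generality $\vec{xy} \in T$. Since any $w \in B_{xy}$ has $xyw \in G \setminus C(T)$, the triangle $\{x,y,w\}$ is transitive in $T$, so $w$ cannot lie in $C(T)_{xy} = N_T^-(x) \cap N_T^+(y)$. This induces a partition
\[B_{xy} = B_{xy}^{++} \sqcup B_{xy}^{--} \sqcup B_{xy}^{+-},\]
where the superscripts record whether $w$ lies in $N_T^+(x)$ or $N_T^-(x)$, and in $N_T^+(y)$ or $N_T^-(y)$. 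Single-arc optimality of $T$ under reversal of $\vec{xy}$ immediately gives $|B_{xy}^{+-}| \leq |(G \cap C(T))_{xy}|$.

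Next I would analyse the $T$-orientation and $G$-edges on the $4$-set $\{x,y,w,w'\}$ for $w \in B_{xy}$ and $w' \in (G \cap C(T))_{xy}$. Since $xyw, xyw' \in G$ and the link graphs $G_x, G_y$ are triangle-free (by $K_4^-$-freeness), $xww', yww' \notin G$. A short case analysis on the type of $w$ and on the orientation of the arc between $w$ and $w'$ in $T$ shows that, for at least one orientation of $\vec{ww'}$, at least one of $\{x,w,w'\}, \{y,w,w'\}$ is cyclic in $T$, and hence lies in $M$. By the $o(1)$-quasirandomness of $T$ (Claim~\ref{claim: T is qr}), a positive proportion of pairs $(w,w')$ realises such an orientation; summing yields
\[|M_x| + |M_y| \geq \tfrac{1}{2}\, |B_{xy}| \cdot |(G \cap C(T))_{xy}| - o(n^2).\]
Combined with Claim~\ref{claim: T is almost balanced} ($|M_x|, |M_y| = o(n^2)$) and $|B_{xy}| \geq \varepsilon n$, this forces $|(G \cap C(T))_{xy}| = o(n)$, whence $|B_{xy}|, |M_{xy}| \geq n/4 - o(n)$. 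The same 4-set analysis applied to pairs $(w,w') \in B_{xy}^{++} \times B_{xy}^{--}$ (for which both $xww'$ and $yww'$ are cyclic in $T$ when $\vec{ww'} \in T$, forcing both into $M$) then forces $|B_{xy}^{++}| \cdot |B_{xy}^{--}| = o(n^2)$; without loss of generality $|B_{xy}^{--}| = o(n)$, so $|B_{xy}^{++}| \geq n/4 - o(n)$ and, by quasirandomness, $B_{xy}^{++}$ is essentially all of $N_T^+(x) \cap N_T^+(y)$, while $M_{xy}$ is essentially all of $N_T^-(x) \cap N_T^+(y)$.

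The hard part will be closing this remaining sub-case. To do so I would consider the $4$-set $\{x,y,w,w''\}$ with $w \in B_{xy}^{++}$, $w'' \in M_{xy}$, and $\vec{ww''} \in T$: the triangle $\{x,w,w''\}$ is then cyclic in $T$, so $xww'' \in G \cap C(T)$ or $xww'' \in M$. The second option can occur for only $o(n^2)$ pairs without violating $|M_x| = o(n^2)$, so most such pairs give $xww'' \in G$; $K_4^-$-freeness on $\{x,y,w,w''\}$ together with $xyw, xww'' \in G$ then forces $yww'' \notin G$. I expect to conclude by combining this structural constraint with a multi-arc switching argument on $T$ — for instance, simultaneously reversing $\vec{xy}$ together with $\vec{yw}$ for each $w \in B_{xy}^{++}$, which converts each triple $xyw$ into a cyclic triangle — and then carefully accounting for the interaction terms using the identity $B_{xy}^{++} \approx N_T^+(x) \cap N_T^+(y)$, the bound $|M_{xy}| \approx n/4$, and the single-arc constraints already derived from Claims~\ref{claim: T is qr}--\ref{claim: T is almost balanced}, in order to exhibit a net gain of $\Omega(n)$ in $|G \cap C(T)|$ and contradict the optimality of $T$. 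The delicate point is ensuring that the positive gain from the flipped $xyw$ triples is not cancelled by losses in the triples $\{y,w,z\}$ with $z \notin \{x\} \cup B_{xy}^{++}$: this is where the triangle-freeness of the link graphs, combined with the identified near-extremal structure of $T$, must be exploited in detail.
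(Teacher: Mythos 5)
Your opening moves are sound and partially overlap with the paper's proof: the same three-way split of $B_{xy}$ according to the $T$-orientations at $x$ and $y$, the use of the single-arc optimality~\eqref{eq:Reverse} to control $B_{xy}^{+-}$, and the ``two cyclic triangles on $\{x,y,w,w'\}$, at least one must be missing'' mechanism combined with $\vert M_x\vert,\vert M_y\vert = o(n^2)$ from Claim~\ref{claim: T is almost balanced}. Your deduction that $\vert (G\cap C(T))_{xy}\vert = o(n)$, hence $\vert B_{xy}^{+-}\vert = o(n)$ and $\vert B_{xy}\vert \ge n/4-o(n)$, is correct. But the argument then has a genuine gap exactly where you say ``the hard part will be closing this remaining sub-case.'' Pairing $B_{xy}^{++}$ against $B_{xy}^{--}$ only gives $\vert B_{xy}^{++}\vert\cdot\vert B_{xy}^{--}\vert = o(n^2)$, which leaves the case $B_{xy}$ essentially contained in $N_T^+(x)\cap N_T^+(y)$ unresolved; the proposed multi-arc switching (reversing $\vec{xy}$ together with all $\vec{yw}$, $w\in B_{xy}^{++}$) is only a plan, with the cancellation problem you yourself flag left open. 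Moreover, two of the structural inputs you intend to feed into that switching are not justified: $o(1)$-quasirandomness controls orientations \emph{between two linear-sized sets}, but for a single fixed pair $x,y$ it does not give $\vert N_T^+(x)\cap N_T^+(y)\vert = n/4+o(n)$ nor $\vert M_{xy}\vert \approx n/4$; Claim~\ref{claim: T is almost balanced} pins down $d^\pm(x),d^\pm(y)$ but not the four individual joint neighbourhood sizes, so ``$B_{xy}^{++}$ is essentially all of $N_T^+(x)\cap N_T^+(y)$'' does not follow.

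The paper closes precisely this case with a short argument you missed: if $\vert B_{xy}^{+}\vert=\Omega(n)$ (your $B_{xy}^{++}$), pair it not with $B_{xy}^{--}$ or $(G\cap C(T))_{xy}$ but with the \emph{entire} common in-neighbourhood $W_{xy}^-:=N_T^-(x)\cap N_T^-(y)$. Inclusion–exclusion together with $d^+(x),d^+(y)=n/2+o(n)$ gives $\vert W_{xy}^-\vert \ge n-2-d^+(x)-d^+(y)+\vert B_{xy}^{+}\vert = \vert B_{xy}^{+}\vert-o(n)=\Omega(n)$, so quasirandomness applies; for the $\ge \frac12\vert W_{xy}^-\vert\vert B_{xy}^{+}\vert-o(n^2)$ pairs $(w,z)\in W_{xy}^-\times B_{xy}^{+}$ with $\vec{zw}\in T$, both $wxz$ and $wyz$ are cyclic, and since $xyz\in G$ and $G$ is $K_4^-$-free at least one of them lies in $M$, giving $\vert M_x\vert+\vert M_y\vert=\Omega(n^2)$, a contradiction. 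The key point is that the auxiliary vertex $w$ need not satisfy $xyw\in G$ at all — only $xyz\in G$ is needed for the $K_4^-$ count — which is what lets one use the large set $W_{xy}^-$ rather than the possibly empty $B_{xy}^{--}$, and avoids any multi-arc switching. A symmetric argument (or global arc reversal) handles $B_{xy}^{-}$, and the paper's treatment of $B_{xy}^{\pm}$ is a direct contradiction from \eqref{eq:Reverse} rather than your (also valid) route through $\vert(G\cap C(T))_{xy}\vert=o(n)$.
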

\begin{proof}
Fix $x\in [n]$. Suppose there exists a vertex $y$ such that $\vert B_{xy}\vert = \Omega(n)$, and, without loss of generality, suppose that $\vec{xy}$ in $T$. Let us divide the vertices in $B_{xy}$ into three sets: 
\begin{eqnarray*}
	B_{xy}^+&:=& \{z\in B_{xy}:\ \{\vec{xz},\vec{yz}\} \subseteq T\},\\
	B_{xy}^-&:=&\{z\in B_{xy}:\ \{\vec{zx},\vec{zy}\} \subseteq T\},\hbox{ and}\\
	B_{xy}^{\pm} &:= & B_{xy} \setminus \left( B_{xy}^- \cup B_{xy}^+\right)=\{z\in B_{xy}:\ \{\vec{xz},\vec{zy}\} \subseteq T\}.
\end{eqnarray*}
It is enough to show that all three sets have size $o(n)$.

Suppose first that $\vert B_{xy}^+\vert = \Omega(n)$. Let  $W_{xy}^-$ be the collection of $w\in [n]$ such that $\{\vec{wx}, \vec{wy} \}  \subseteq T$. By Claim~\ref{claim: T is almost balanced} and the inclusion-exclusion principle, we have
\begin{align*}
\vert W_{xy}^-\vert \geq n-2- d^+(x)-d^+(y) + \vert B_{xy}^+\vert &\ge \Omega(n).
\end{align*}
By $o(1)$-quasirandomness of $T$, there are at least $\frac{1}{2} \vert W_{xy}^-\vert \cdot \vert B_{xy}^+\vert -o (n^2)$ pairs $(w,z)\in W_{xy}^-\times B_{xy}^+$ such that $\vec{zw}\in T$. For each such a pair, both $wxz$ and $wyz$ lie in $C(T)$. However, since $xyz\in B$ and $G$ is $K_4^-$-free, at least one of the triples $wxz, wyz$ must lie in $M$, and hence 
\begin{align*}
{\vert M_x\vert + \vert M_y\vert} & \ge \frac{\vert W_{xy}^-\vert \cdot \vert B_{xy}^+\vert}{2} - o(n^2) = \Omega(n^2),
\end{align*}
contradicting the vertex-degree bound in $M$ established in Claim~\ref{claim: T is almost balanced}. Therefore, $\vert B_{xy}^+ \vert = o(n)$.  A~symmetric argument yields that  $\vert B_{xy}^- \vert = o(n)$.

Finally, we turn our attention to the collection $B_{xy}^{\pm}$.  Suppose, for the sake of a contradiction, that $\vert B_{xy}^{\pm}\vert = \Omega(n)$. Then, by (\ref{eq:Reverse}), we have also $\vert (G\cap C(T))_{xy}\vert = \Omega(n)$. Fix a pair of vertices $(w,z) \in (G\cap C(T))_{xy} \times B_{xy}^{\pm}$.

If $\vec{zw}\in T$, then $wxz$ induces a cyclically oriented triangle in $T$ which must be a missing triple; indeed, note that $xyw, xyz\in G$. Similarly if $\vec{wz}\in T$, then $ywz \in M$. Analogously to the case of bounding $|B^+_{xy}|$, $x$ or $y$ must have its degree in $M$ being at least
\[\frac{\vert B_{xy}^{\pm}\vert \cdot \vert (G\cap C(T))_{xy}\vert}2 = \Omega(n^2)\]
contradicting Claim~\ref{claim: T is almost balanced}.
\end{proof}
\noindent With these results in hand, we are now ready to prove the crucial claim needed to finish the proof of Theorem~\ref{theorem: largest size among codegree extremal achieved by tournaments}.
\begin{claim}\label{claim:CT-GLocal}
For every bad edge $E_B\in B$ there are at least $n/4 - o(n)$ missing edges $E_M\in M$ such that $\vert E_B\cap E_M\vert = 2$.
\end{claim}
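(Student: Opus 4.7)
The bad edge $E_B = \{x,y,z\}$ belongs to $G \setminus C(T)$, so the induced triangle on $\{x,y,z\}$ in $T$ is not cyclic, hence transitive. After relabeling, assume $\vec{xy}, \vec{yz}, \vec{xz}$ are in $T$, with $x$ the source and $z$ the sink. The plan is to use the extremal pair $\{x,z\}$ to produce many auxiliary vertices $w$ which, via $K_4^-$-freeness, force missing edges through pairs of $\{x,y,z\}$.

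To that end, I would set $W := N^-_T(x) \cap N^+_T(z)$. Since $\vec{xz}$ is fixed, $W$ coincides with $C(T)_{xz}$ (the third vertices of cyclic triangles on the pair $\{x,z\}$). A lower bound on $|W|$ comes from combining the codegree hypothesis $d(x,z) \ge n/4 - o(n)$ with the fact $|B_{xz}| = o(n)$ from Claim~\ref{claim:CodegreeG-CT}: indeed,
\[ |W| = |C(T)_{xz}| \;\geq\; |(G \cap C(T))_{xz}| \;=\; d(x,z) - |B_{xz}| \;\geq\; \tfrac{n}{4} - o(n). \]
Note that $W$ automatically excludes $x,y,z$, since $\vec{xy}$ and $\vec{xz}$ place $y$ and $z$ in $N^+_T(x)$, not $N^-_T(x)$.

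Next, I would show that every $w \in W$ creates at least one missing edge of the desired shape. Consider the four-vertex subtournament $T[\{x,y,z,w\}]$: the triangle $\{x,z,w\}$ is cyclic by the definition of $W$ (arcs $\vec{xz}, \vec{zw}, \vec{wx}$). Depending on the orientation of the remaining arc $\{y,w\}$, exactly one of $\{x,y,w\}$ (when $\vec{yw}$) or $\{y,z,w\}$ (when $\vec{wy}$) is also cyclic, the other being transitive. Hence exactly two of the three triples $xyw, xzw, yzw$ lie in $C(T)$. The $K_4^-$-freeness of $G$, together with $xyz \in G$, forces at most one of these three triples to belong to $G$. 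Therefore at least one of the two cyclic triples lies in $C(T) \setminus G = M$; this triple shares exactly two vertices with $E_B$ and contains the new vertex $w$.

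Distinct $w \in W$ yield distinct missing edges (each contains its own $w$), so the total number of missing edges $E_M$ with $|E_B \cap E_M|=2$ is at least $|W| \geq n/4 - o(n)$, as required. No step here is a serious obstacle: the orientation bookkeeping of Step~2 is a short case analysis, and the only quantitative ingredient is the lower bound on $|W|$, which is immediate from Claim~\ref{claim:CodegreeG-CT} together with the minimum codegree assumption. The argument is entirely local to the bad edge $E_B$, which is why no further regularity or quasirandomness input beyond what was already established is needed.
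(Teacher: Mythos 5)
Your proposal is correct and follows essentially the same route as the paper: orient the bad edge transitively, take the vertices $w\in C(T)_{xz}=N_T^-(x)\cap N_T^+(z)$, use $K_4^-$-freeness on the quadruple $\{x,y,z,w\}$ (where exactly two of the remaining triples are cyclic in $T$) to force a missing triple through $w$ and two vertices of $E_B$, and lower-bound $\vert C(T)_{xz}\vert$ via the codegree assumption together with Claim~\ref{claim:CodegreeG-CT}. Your write-up is in fact slightly cleaner than the paper's, which contains minor index typos ($C(T)_{xy}$, $G_{xy}$, $B_{xy}$ where $xz$ is meant), but the argument is identical.
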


\begin{proof}
Let $E_B=xyz\in B$ be an arbitrary bad edge. Without loss of generality, the pairs from $E_B$ are oriented as $\vec{xy}, \vec{yz}, \vec{xz}$ in $T$. Now consider a vertex $w\in C(T)_{xz}$.

By definition, we have $\vec{zw}, \vec{wx}\in T$. If $\vec{wy}\in T$ then $\{wxz,wyz\} \subseteq C(T)$, and since $G$ is $K_4^-$-free and $xyz\in G$, at least one of the triples $wxy, wxz$ must lie in $M$. Similarly, if $\vec{yw}\in T$ then $\{wxz,wxy\} \subseteq C(T)$ and at least one of the triples lies in $M$. In either case, each $w\in C(T)_{xy}$ yields a triple containing $w$ that lies in $M$ and intersects $E_B$ in two vertices. However, by the codegree assumption on $G$ and Claim~\ref{claim:CodegreeG-CT}, we have
\begin{align*}
\vert C(T)_{xz}\vert &\geq \vert G_{xy}\vert -\vert B_{xy}\vert\ = n/4 - o(n)
\end{align*}
choices of such a vertex $w$. This finishes the proof of the claim.
\end{proof}
Let us now finish the proof of Theorem~\ref{theorem: largest size among codegree extremal achieved by tournaments}.
Provided $n$ is sufficiently large, Claim~\ref{claim:CT-GLocal} yields at least $\vert B\vert \cdot \frac n5$ pairs $(E_B, E_M)\in B\times M$ with $|E_B \cap E_M| = 2$. Moreover, each $E_M=xyz\in M$ can feature in at most $\vert B_{xy}\vert + \vert B_{yz}\vert +\vert B_{xz}\vert$ of these pairs, which by Claim~\ref{claim:CodegreeG-CT} is at most $o(n)$. Therefore,
\[o(n) \cdot \vert M\vert \ge \vert B\vert \cdot \frac{n}{5}\ ,\] 
thus if there is at least one bad edge, then $\vert M\vert >\vert B\vert$.
We conclude that $\vert C(T)\vert \geq \vert (G)\vert $ with equality only if $G=C(T)$.
\qed
\section{Concluding remarks}\label{section: conclusion}
\subsection{Better lower bounds for $\ex_2(n, K_4^-)$ ?}
We have shown in Theorem~\ref{theorem: codegree threshold} that $\mathrm{ex}_2(n, K_4^-)\leq \lfloor\frac{n+1}{4}\rfloor$, and that this upper bound is tight when $n = 4k+3$ or $4k+2$ and there exists a skew Hadamard matrix of order $4k+4$. Short of proving Seberry's (and hence Hadamard's) conjecture, what is the best general lower bound one can give on $\ex_2(n, K_4^-)$ ?

Taking a tournament $T$ on $[n]$ uniformly at random and considering the Erd{\H o}s--Hajnal construction $C(T)$ yields a lower-bound of $\frac{n}{4}-O\left(\sqrt{n\log n}\right)$ via a standard Chernoff bound on the probability that a given pair has low codegree and a union bound over all pairs. One can do a little better, however, by exploiting results about the distribution of primes congruent to $3 \pmod 4$.

Indeed, suppose that $c>0$ is such that for every $n$ sufficiently large, there exists an integer $m$ with $n< m \leq n+n^c$ such that there exists a skew Hadamard matrix of order $m$.  Then, as shown in Proposition~\ref{prop:  existence of good tournaments},  there is a tournament $T$ on $m-1$ vertices such that $\delta_2(C(T))= \frac{m}{4}$. Now delete a set of $(m-1-n)$ vertices of $T$ chosen uniformly at random to obtain a new tournament $T'$ on $n$ vertices exactly. Taking Chernoff bounds for the hypergeometric distribution (see e.g.\ ~\cite[Lemma 2]{HaggkvistThomason95}) and a union bound over all pairs yields that w.h.p.\ 
\begin{align*}
\ex_2(n,K_4^-)\geq \delta_2(C(T'))\geq \frac{n}{4}-O\left(\sqrt{(m-n)\log n }\right)=\frac{n}{4}-O\left(n^{c/2+o(1)}\right).
\end{align*}
As stated in the introduction, Seberry's conjecture is known to hold for all values of $m$ of the form $m=2^t \prod_{i \in I} (q_i+1)$, where $t\in \mathbb{Z}_{\geq 0}$ and $q_i$ is a prime power congruent to $3\pmod 4$ for all $i\in I$ (see \cite[Theorem 4.1]{WallisStreetSeberry72}). In particular it holds for all $m$ such that $m-1$ is a prime congruent to $3 \pmod 4$. Now it is known~\cite[Theorem~3(I)]{BakerHarmanPintz96} that for $n\in \mathbb{N}$, there exists a prime $m-1$ congruent to $3 \pmod 4$ in the interval $n\leq  m-1 \leq n +n^{0.55+o(1)}$. Together with the argument in the paragraph above, this implies
\begin{align*}
\ex_2(n, K_4^-)\geq \frac{n}{4} -O\left(n^{0.275+o(1)}\right).
\end{align*}
We believe however that the correct bound should be of the form $n/4-O(1)$, and we propose the following problem, which can be viewed as a weakening of Seberry's conjecture.
\begin{problem}
Show that there exists a constant $C>0$ such that for any $n\in \mathbb{N}$ there exists a tournament $T$ on $n$ vertices with $\delta_2(T)\geq \frac{n}{4}-C$.
\end{problem}

\subsection{Codegree density and smooth Tur\'an density}

As mentioned in Section~\ref{subsection: background}, the codegree density of $K_4^-$ coincides with various `smooth' or `weakly quasirandom' versions of Tur\'an density, and the near-extremal constructions are the same. What is more, the conjectured value of the still unknown codegree density of $K_4$ is the same as the values of several of its `weakly quasirandom' Tur\'an densities, as shown by Reiher, R\"odl and Schacht~\cite{ReiherRodlSchacht16a, ReiherRodlSchacht16} (see also~\cite{ReiherRodlSchacht16d, ReiherRodlSchacht16c}). Again the (random) extremal constructions are the same. It is natural to ask whether there is any relationship in general between the codegree density of a $3$-graph and its `smooth' Tur\'an densities. The $3$-graph $F_{3,2}=\{abc, abd, abe,cde\}$ has $\pi_2(F_{3,2})=1/3$, as proved in~\cite{FalgasRavryMarchantPikhurkoVaughan15}, but has trivial, zero density for all notions of smooth Tur\'an density, so such a relationship would have to take the form of a one-sided upper bound.

To make this discussion more precise, let us consider the weakest notion of weak quasirandomness: a $3$-graph $G$ on $n$-vertices is said to be $K_{1,1,1}$-\emph{weakly quasirandom} (wqr) with parameters $(\varepsilon, p)$ if for all subsets $X\subseteq V(G)$, $\left\vert \vert G[X] \vert - p \binom{\vert X\vert} {3}\right\vert \leq \varepsilon n^3$.  A sequence of $3$-graphs $(G_n)_{n\in \mathbb{N}}$ with $\vert V(G_n)\vert \rightarrow \infty$ as $n\rightarrow \infty$ is then said to be $K_{1,1,1}$\emph{-wqr with density} $p$ if each $G_n$ is $K_{1,1,1}$-wqr with  parameters $(p, \varepsilon_n)$ and $\varepsilon_n\rightarrow 0$. The $K_{1,1,1}$\emph{-wqr Tur\'an density} of a $3$-graph $F$ is the infimum over all $p\geq 0$ such that in every $K_{1,1,1}$-wqr sequence $(G_n)_{n\in \mathbb{N}}$ with density at least $p$, all but finitely many of the $G_n$ contain a copy of $F$ as a subgraph. Stronger notions of weak quasirandomness and associated Tur\'an densities (which are lower bounded by the $K_{1,1,1}$-wqr Tur\'an density) exist, see~\cite{ReiherRodlSchacht16}. Our question above in its strongest form thus asks:

\begin{question}\label{question: codegree density upper bound for smooth turan density}

Is the codegree density of a $3$-graph $F$ always an upper bound on its $K_{1,1,1}$-wqr Tur\'an density?

\end{question}

Shortly after the first draft of this paper was written, Falgas-Ravry and Lo~\cite{FalgasRavryLo16} gave a positive answer to Question~\ref{question: codegree density upper bound for smooth turan density} when $K_{1,1,1}$-wqr is replaced by a slightly stronger form of weak quasirandomness. One reason to believe the answer to Question~\ref{question: codegree density upper bound for smooth turan density} might be positive is that it would be enough to show that one can extract from any $K_{1,1,1}$-wqr $3$-graph with density $p$ a `large' subgraph in which `most' of the pairs have codegree density $p$. This follows from an argument of~\cite{FalgasRavryLo16}; we give here a simpler proof of this fact tailored to the present setting.

\begin{proposition}\label{proposition: codegree density with a few bad pairs allowed}

Let $F$ be a $3$-graph and $\varepsilon>0$ be fixed. Then there exist $\eta>0$ and $n_0\in \mathbb{N}$ such that if  $G$ is a $3$-graph on $n\geq n_0$ vertices in which all but $\eta n^2 $ pairs of vertices have codegree at least $(\pi_2(F)+\varepsilon)n$ then $G$ must contain a copy of $F$ .

\end{proposition}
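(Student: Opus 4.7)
The plan is to reduce the problem to the standard definition of the codegree density by passing to a random induced subgraph on a carefully chosen constant number of vertices. First, set $\varepsilon' := \varepsilon/2$ and use the definition of $\pi_2(F)$ as a limit to obtain an integer $m_0 = m_0(F, \varepsilon')$ such that every $3$-graph $H$ on $m' \geq m_0$ vertices with $\delta_2(H) \geq (\pi_2(F) + \varepsilon') m'$ contains a copy of $F$. Next, choose an integer $m \geq m_0$ large enough (as a function of $\varepsilon$ and $\pi_2(F)$) so that Hoeffding-type concentration for the hypergeometric distribution gives $\binom{m}{2} \exp\left(-c\,\varepsilon^2 m\right) \leq 1/4$ for an absolute constant $c>0$. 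Finally, fix $\eta := 1/(8m^2)$ and take $n_0$ much larger than $m$.

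Given a $3$-graph $G$ satisfying the hypothesis for these parameters, I would sample $U \subseteq V(G)$ with $|U|=m$ uniformly at random and bound the probabilities of two bad events. For the first event, the expected number of bad pairs of $G$ lying inside $U$ is at most $\eta n^2 \cdot \binom{m}{2}/\binom{n}{2} \leq \eta m^2 \leq 1/8$, so with probability at least $7/8$ every pair in $U^{(2)}$ is good in $G$. For the second event, fix a good pair $x,y \in V(G)$: conditional on $x,y \in U$, the codegree of $xy$ in $G[U]$ is a hypergeometric random variable with mean $d(x,y)(m-2)/(n-2) \geq (\pi_2(F) + \varepsilon)(m-2)$, which exceeds $(\pi_2(F) + \varepsilon')m$ by a margin of order $\varepsilon m$ once $m$ is sufficiently large. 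Hoeffding's inequality combined with a union bound over the at most $\binom{m}{2}$ candidate pairs then bounds the probability that some good pair in $U$ has codegree below $(\pi_2(F) + \varepsilon') m$ in $G[U]$ by $\binom{m}{2}\exp(-c\varepsilon^2 m) \leq 1/4$.

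Combining the two estimates, with probability at least $1 - 1/8 - 1/4 > 1/2$ the induced subgraph $G[U]$ contains no bad pair \emph{and} every pair in $U$ has codegree at least $(\pi_2(F) + \varepsilon') m$ in $G[U]$, whence $\delta_2(G[U]) \geq (\pi_2(F) + \varepsilon') m$. Since $m \geq m_0$, the choice of $m_0$ forces $G[U]$, and hence $G$, to contain a copy of $F$, as required.

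The main obstacle I anticipate is making the choices of $m$ and $\eta$ compatible: $m$ must be large enough (as a function of $\varepsilon$) for the hypergeometric concentration to overwhelm the union bound over pairs in $U$, while $\eta m^2$ must stay below a small constant. This tension is resolved by fixing $m$ first as a function of $F$ and $\varepsilon$, and then choosing $\eta$ as a function of $m$; the lower-order $O(1/n)$ corrections arising from sampling without replacement are absorbed into the choice of $n_0$.
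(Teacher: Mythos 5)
Your proof is correct, but it takes a genuinely different route from the paper's. The paper argues globally: it adds at most $\eta n^3$ triples to repair the bad pairs, obtaining $G'$ with $\delta_2(G')\geq (\pi_2(F)+\varepsilon)n > \ex_2(n,F)+\tfrac{\varepsilon}{2}n$, invokes the supersaturation result of Mubayi and Zhao to find $C_\varepsilon n^{v(F)}$ copies of $F$ in $G'$, and then deletes the added triples, each of which destroys at most $v(F)!\,n^{v(F)-3}$ copies, so for $\eta$ small a copy survives in $G$. You instead sample a random $m$-set $U$, eliminate bad pairs by a first-moment bound (which dictates $\eta\approx 1/m^2$), restore the codegree condition inside $U$ by hypergeometric concentration, and then apply the definition of $\pi_2(F)$ on $m\geq m_0$ vertices. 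Your argument is more self-contained --- it bypasses the supersaturation citation, in effect reproving the needed instance of it by sampling --- whereas the paper's proof is shorter given that citation and yields $\Omega\bigl(n^{v(F)}\bigr)$ copies of $F$ rather than a single one. Two small points of rigour, neither fatal: the ``union bound over the at most $\binom{m}{2}$ candidate pairs'' should formally be a sum over all good pairs $\{x,y\}$ of $V(G)$ of $\Pr\bigl[\{x,y\}\subseteq U\bigr]$ times the conditional failure probability, which indeed gives the stated bound $\binom{m}{2}e^{-c\varepsilon^2 m}$ since $\Pr\bigl[\{x,y\}\subseteq U\bigr]=\binom{m}{2}/\binom{n}{2}$; and your expectation bound for bad pairs carries a factor $n/(n-1)$, but the slack in $1-\tfrac18-\tfrac14>\tfrac12$ absorbs this and the other $O(1/n)$ corrections, as you note.
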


\begin{proof}

Let $G$ be a $3$-graph in which all but $\eta n^2 $ pairs of vertices have codegree at least $(\pi_2(F)+\varepsilon)n$. Add at most $\eta n^3$ triples to $G$ to obtain a new $3$-graph $G'$ on $n$ vertices with $\delta_2(G')\geq 	(\pi_2(F)+\varepsilon)n$. Provided $n_0$ is sufficiently large, $\delta_2(G')>\ex_2(n, F) +\frac{\varepsilon}{2} n$.

As shown by Mubayi and Zhao in~\cite[Proposition~1.4]{MubayiZhao07}, the codegree density function exhibits \emph{supersaturation}, which in our case implies that $G'$ contains $C_{\varepsilon}n^{v(F)}$ copies of $F$, where $C_{\varepsilon}>0$ is a constant depending on $\varepsilon$. Now let us remove the triples from $G'\setminus G$. Each of these is adjacent to at most $v(F)! \cdot n^{v(F) -3}$ copies of $F$ in $G'$. Thus there must remain at least $(C_{\varepsilon}- \eta v(F) !)\cdot n^{v(F)}$ copies of $F$ in $G$. Taking $\eta=\eta(\varepsilon, F)$ sufficiently small, this is strictly positive, establishing our claim.
\end{proof}

\noindent Proposition~\ref{proposition: codegree density with a few bad pairs allowed} says, in essence, that allowing for $o(n^2)$ pairs to have low codegree does not change the codegree threshold by more than $o(n)$. Finally, let us note that embarrassingly we do not know the answer to the following question:

\begin{question}\label{question: high codegree subgraphs in quasirandom subsequence}

Let $(G_n)_{n\in \mathbb{N}}$ be a sequence of $K_{1,1,1}$-wqr $3$-graphs with density $p>0$.
Must there exist a sequence of subgraphs $(H_n)_{n\in \mathbb{N}}$ with $H_n\subseteq G_n$, $v(H_n)\rightarrow \infty$ and $\delta_2(H_n)/v(H_n)$ bounded away from zero?

\end{question}

In other words, does a smooth distribution of the edges in a $3$-graph imply the existence of a reasonably large subgraph with high internal codegree density? Although the instinctive reaction of many researchers we have consulted was that the answer to Question~\ref{question: high codegree subgraphs in quasirandom subsequence} should be negative, a counterexample seems hard to come by: the quasirandomness condition pulls in the direction of random constructions, where it is difficult to control the codegree of subgraphs on $O(\log n)$ vertices, while the necessity to have no large subgraph with large internal codegree density pushes us towards structured constructions, which fail to be sufficiently quasirandom. Whether positive or negative, we expect an answer to Question~\ref{question: high codegree subgraphs in quasirandom subsequence} would thus be quite interesting.

We note that Falgas-Ravry and Lo~\cite{FalgasRavryLo16} gave a positive answer to Question~\ref{question: high codegree subgraphs in quasirandom subsequence} albeit again only for a stronger notion of weak quasirandomness than $K_{1,1,1}$-wqr. However, they also provided examples of $K_{1,1,1}$-wqr graphs in which the order of a largest subgraph with non-zero minimum codegree only grew as the inverse of the error parameter $\varepsilon$, suggesting that the situation is not entirely obvious one way or the other. 

\subsection{Tur\'an problems with codegree conditions}

The codegree-extremal construction for $K_4^-$ is quite different from the recursive construction of Frankl and F\"uredi which is conjectured to be extremal for the Tur\'an problem (indeed the later construction has pairs of vertices with codegree zero). A natural question is what happens if we interpolate between the two problems. Explicitly, for $c\in[0, \pi_2(F)]$, let $\ex^{\delta_2\geq c}(n, F)$ denote the maximum number of edges in an $F$-free $3$-graph on $n$ vertices with minimum codegree at least $c(n-2)$ (if such a $3$-graph exists). 

\begin{problem}\label{problem: Turan problem with codegree condition}

Determine the asymptotic behaviour of $\ex^{\delta_2\geq c}(n, K_4^-)$ for $c\in [0,1/4]$.

\end{problem}
\noindent Write $f(c)$ for the limit of  $\ex^{\delta_2\geq c}(n, K_4^-)/\binom{n}{3}$ as $n\rightarrow \infty$ (it can be shown this limit exists). The function $f(c)$ is nonincreasing in $[0, \pi_2(F)]$ and takes values in $[0,\pi(F)]$. We have shown $f(1/4)=1/4$, and Mubayi conjectured in~\cite[Conjecture 2]{Mubayi03} $f(0)=2/7$. We can give a lower bound for $f(c)$ in the range $(0, 1/4)$ as follows.

Let $H_6$ denote the $3$-graph on $[6]$ with edges $\{123, 234, 345, 145, 125, 136, 356, 256, 246, 146\}$. This $3$-graph was constructed by Frankl and F\"uredi~\cite{FranklFuredi84} and has the property that the link graph of every vertex is a cycle on $5$-vertices; in particular, $H_6$, its blow-ups and its iterated blow-ups are all $K_4^-$-free. Given $c>0$, let $t =\lfloor \frac{\log (1/4c)}{\log 6}\rfloor$. Add all triples from a balanced blow-up of $H_6$ on $n$ vertices, then repeat this construction inside each of the $6$ parts, iterating this procedure a total of $t$ times. Finally, inside each of the $6^t$ parts of size $(1+o(1))n/6^t$ that remain, place a $C(T)$ construction. This gives a $K_4^-$-free $3$-graph with minimum codegree $(1/4+o(1))n/6^t\geq (c+o(1))n$ and $(2/7 - 6^{-2t}/28 +o(1))\binom{n}{3}$ edges.

\begin{corollary}\label{corollary: lower bound for Turan problem with codegree condition}

$f(c)\geq \frac{2}{7} -\frac{1}{28} 6^{-2\lfloor \log (1/4c)/\log 6\rfloor}$. \qed

\end{corollary}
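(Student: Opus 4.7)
\medskip
\noindent\textbf{Proof proposal.}
The plan is to verify that the construction introduced immediately before the statement is $K_4^-$-free, has minimum codegree at least $(c+o(1))n$, and has edge density $\tfrac{2}{7}-\tfrac{1}{28}\cdot 6^{-2t}+o(1)$, where $t=\lfloor\log(1/4c)/\log 6\rfloor$. Since $f(c)$ is defined as the limit of $\ex^{\delta_2\ge c}(n,K_4^-)/\binom{n}{3}$, these three properties together yield the claimed lower bound on $f(c)$.

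First I would check $K_4^-$-freeness. The recursion consists of $t$ nested levels of $H_6$-blow-ups, with a $C(T)$ construction placed inside each of the $6^t$ resulting bottom-level parts. Given any four vertices $\{p,q,r,s\}$, let $j^\ast$ be the coarsest level at which at least two of them occupy distinct parts, with the convention $j^\ast=t+1$ if they all lie in the same bottom-level part. If $j^\ast=t+1$ then $\{p,q,r,s\}$ lies inside a single $C(T)$, which is $K_4^-$-free by Construction~\ref{construction: C(T)}. Otherwise any edge on $\{p,q,r,s\}$ that spans at least two level-$j^\ast$ parts must be a level-$j^\ast$ $H_6$-blow-up edge, and hence must use three \emph{distinct} level-$j^\ast$ parts. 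A short case analysis on the partition type of $\{p,q,r,s\}$ into level-$j^\ast$ parts ($1{+}1{+}1{+}1$, $2{+}1{+}1$, $2{+}2$, or $3{+}1$) rules out the presence of a $K_4^-$: whenever two of the four vertices share a level-$j^\ast$ part, no edge of the construction can contain them both, so too many triples on $\{p,q,r,s\}$ are missing; and when all four lie in distinct level-$j^\ast$ parts, a $K_4^-$ on them would descend to a $K_4^-$ in $H_6$, which does not exist because every link graph of $H_6$ is a $5$-cycle and hence triangle-free.

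Next I would bound the minimum codegree. The key structural fact I would exploit is that every pair of vertices of $H_6$ lies in exactly $2$ of its $10$ edges, which is a direct check. For a pair $\{u,v\}$ in the construction, let $j^\ast$ again be the coarsest level at which $u$ and $v$ lie in distinct parts. If $j^\ast\le t$ then the codegree of $\{u,v\}$ is asymptotically $2\cdot n/6^{j^\ast}$, coming entirely from the level-$j^\ast$ blow-up, whereas if $j^\ast=t+1$ then $\{u,v\}$ lies inside a single $C(T)$, contributing roughly $(n/6^t)/4$ to the codegree. The latter bound is the binding one, and the choice $t=\lfloor\log(1/4c)/\log 6\rfloor$ gives $6^t\le 1/(4c)$, hence $\delta_2\ge(c+o(1))n$. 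Here I would take a uniformly random tournament for each $C(T)$ and apply standard concentration (Chernoff plus a union bound) to ensure that each bottom $C(T)$ has minimum codegree $(1/4+o(1))\cdot n/6^t$ with high probability.

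Finally, the density computation is a routine geometric-series calculation: the level-$j$ $H_6$-blow-up edges contribute $10/36^j$ to the overall density (there are $6^{j-1}$ copies of a blow-up, each with $10$ cross-part edges of multiplicity $(n/6^j)^3$), while the $6^t$ bottom $C(T)$'s contribute an additional $1/(4\cdot 36^t)$, summing to $\tfrac{2}{7}(1-36^{-t})+\tfrac{1}{4}\cdot 36^{-t}=\tfrac{2}{7}-\tfrac{1}{28}\cdot 6^{-2t}$. I expect the $K_4^-$-freeness check to be the most delicate step, since it requires systematically handling the interplay between edges originating at different levels of the hierarchy and edges inside the $C(T)$'s at the bottom; once that is done the codegree and density steps reduce to elementary estimates.
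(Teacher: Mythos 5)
Your proposal is correct and follows essentially the same route as the paper, which simply presents this iterated $H_6$-blow-up construction with $C(T)$'s at the bottom in the paragraph preceding the corollary and leaves the verification ($K_4^-$-freeness, minimum codegree $(1/4+o(1))n/6^{t}\geq (c+o(1))n$ via $6^{t}\leq 1/(4c)$, and the density computation $\tfrac{2}{7}-\tfrac{1}{28}6^{-2t}$) implicit; your write-up supplies exactly those checks. One minor imprecision: in the $3{+}1$ case the triple lying inside a single level-$j^\ast$ part may itself be an edge coming from a deeper level, so your blanket claim that no edge contains a pair sharing a level-$j^\ast$ part is slightly too strong, but since at most one of the three edges of a would-be $K_4^-$ can then be present, the conclusion of your case analysis still stands.
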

\noindent Note that our lower bound is not a continuous function of $c$; however, as it does not seem possible to shift in a continuous way from a $C(T)$ (codegree-extremal) construction to an iterated blow-up of $H_6$ (the conjectured extremal construction for Tur\'an density), this is a behaviour we could plausibly encounter.

\pagebreak

\appendix\section{Description of the supplementary computer programs}

We split the task of a formal verification of our proof of Lemma~\ref{lemma: flag} into six steps (A.1-A.6),
where each step will be accompanied with a small script written in SAGE~\cite{SAGEmath} that performs the described task.
Note that we also have independent implementations in C of each of the tasks listed below, which do run faster, 
however, they are lengthier and verifying their correctness is somewhat more tedious.

Before we continue, let us briefly recall some notation from Section~\ref{section: flag algebras}. For two $3$-graphs $H$ and $G$, $p(H,G)$ denotes the probability that a random $v(H)$-vertex subgraph of $G$ is isomorphic to $H$.
Also recall that for a $k$-vertex type $\sigma$ and a fixed $\sigma$-flag $F$, the value $p_F^\sigma$ denotes the probability that a random injection from $[k]$ to $F^\emptyset$ yields a $\sigma$-flag that is isomorphic to $F$.
Finally, for two $\sigma$-flags $F_1$ and $F_2$, and a $(v(F_1)+v(F_2)-v(\sigma))$-vertex $\sigma$-flag $H$, $p(F_1,F_2,H)$ denotes
the probability that a random partition of the unlabelled vertices of $H$ into two parts of respective sizes $v(F_1)-v(\sigma)$ and $v(F_2)-v(\sigma)$ yields $\sigma$-flags isomorphic to $F_1$ and $F_2$.

To simplify the following presentation, let us define the so-called \emph{flag pair density}.
Given a pair of $\sigma$-flags $F_1$ and $F_2$, and a $3$-graph $G$ with $v(F_1)+v(F_2)-v(\sigma)$ vertices, the flag pair density $\wbar{p}(F_1,F_2,G)$ is defined as 
\[
\sum_{\substack{F \in \cF_{v(G)}^{\sigma} \\ F^\emptyset = G}} p^{\sigma}_F \cdot p(F_1,F_2,F)
.\]
For a $3$-graph $G$ satisfying $v(G) > v(F_1)+v(F_2)-v(\sigma)$, we generalize the above and define the flag pair density $\wbar{p}(F_1,F_2,G)$ as follows:
\[
\wbar{p}(F_1,F_2,G) := \sum_{H \in \cF_{v(F_1)+v(F_2)-v(\sigma)}} p(H,G) \cdot \wbar{p}(F_1,F_2,H)
.\]

\subsection{The list of $7$-vertex $K_4^-$-free $3$-graphs}
We identify $3$-graphs on the vertex-set $[k]$ with the collection of their edges.
We order the collection of such $3$-graphs in the lexicographic order inherited from the order on $[k]$.

Given $k$, we generate the list of $k$-vertex $K_4^-$-free $3$-graphs on the vertex-set $[k]$ that are lexicographically minimal within their isomorphism class in the following way:
we iteratively go over all the labelled $k$-vertex $K_4^-$-free $3$-graphs $G$ such that the vertex-subset $[k-1]$ induces some lexicographically minimal $(k-1)$-vertex $K_4^-$-free $3$-graph,
and include in the list exactly those $3$-graphs $G$ that are lexicographically minimal.
We note that our SAGE script outputs the list in the same order and format as Flagmatic 1.5.1 does.

For $k=7$, generating this list readily yields that $|\cF_7| = 8157$.
Moreover, the maximum number of edges of a $7$-vertex $K_4^-$-free $3$-graph is $15$,
and the number of $3$-graphs in~$\cF_7$ with exactly $2$, $3$, $4$,\dots, $14$ and $15$ edges
is equal to $3$, $9$, $32$, $102$, $304$, $752$, $1451$, $2022$, $1909$, $1118$, $374$, $70$, $8$ and $1$, respectively.

\smallskip

\noindent SAGE script: {\tt gen7.sage}

\noindent Output file: {\tt graph\_lists/list7}

\subsection{The lists of $6$-vertex $\iota_i$-flags, $i \in [6]$, and the corresponding pair densities}
Firstly, for every $F \in \cF_6$ we consider all possible injections of $[5]$ into $V(F)$ and generate the appropriate lists $\cF_6^{\iota_i}$.
Then, for every $G \in \cF_7$, we consider all the injections $m: [5] \to V(G)$ together with the partitions of $V(G) \setminus m([5])$ into two non-empty parts.
This allows us to directly compute the coefficients $\wbar{p}(F_1,F_2,G)$ for every expression of the form
\[\unlab{F_1 \times F_2}{\iota_i}
= \sum_{F \in \cF_7^{\iota_i}} p^{\iota_i}_F \cdot p(F_1,F_2,F) \cdot F^\emptyset 
= \sum_{G \in \cF_7} \wbar{p}(F_1,F_2,G) \cdot G.\]
As in the previous subsection, we output this data in the same format as Flagmatic 1.5.1 does.
\smallskip

\noindent SAGE script: {\tt gen\_flags.sage}

\noindent Output files: {\tt flagmatic\_flags-pruned.rat} and {\tt graph\_lists/list6\_iotaX} for $X \in [6]$

\subsection{The codegree expressions $D \in \cD$ as linear combinations of $\cF_7$}
Similarly to the previous section, we first generate the list $\cF_6^\tau$ by considering all the possible injections of $[2]$ into the elements of $\cF_6$.
For each $F \in \cF_6^\tau$ we then compute the coefficients $\wbar{p}(F,E^\tau,G)$ and $\wbar{p}(F,N^\tau,G)$ for the expressions
\[\unlab{F \times E^\tau}{\tau} = \sum_{G \in \cF_7} \wbar{p}(F,E^\tau,G) \cdot G \quad \mbox{and}\quad \unlab{F \times N^\tau}{\tau} = \sum_{G \in \cF_7} \wbar{p}(F,N^\tau,G) \cdot G.\]
As we have noted in Section~\ref{subsection: codegree density}, although the set $\cF_6^\tau$ has size $1643$, the symmetry of the type $\tau$ yields that $|\cD| = 905$.

\smallskip

\noindent SAGE script: {\tt codeg.sage}

\noindent Output file: {\tt ineq\_codeg}

\subsection{The tight-path expressions $\cP_0$, $\cP_1$ and $\cP_2$  as linear combinations of $\cF_7$}
For the $4$-vertex types $\sigma_i$ with $i\in\{0,1,2\}$ edges, we generate the lists $\cF_5^{\sigma_i}$ by considering all the possible injections of $[4]$ into the elements of $\cF_5$,
and then identify the corresponding elements of $\cP_i \subseteq \cF_5^{\sigma_i}$.
Next, in a similar fashion as in the two previous sections, we compute for every $F_1,F_2\in\cF_5^{\sigma_i}$ and $G \in \cF_7$ the coefficients $\wbar{p}(F_1,F_2,G)$ in
\[\unlab{F_1 \times F_2}{\sigma_i}
= \sum_{F \in \cF_6} \wbar{p}(F_1,F_2,F) \cdot F
= \sum_{F \in \cF_6} \wbar{p}(F_1,F_2,F) \cdot \sum_{G \in \cF_7} p(F,G) \cdot G
= \sum_{G \in \cF_7} \wbar{p}(F_1,F_2,G) \cdot G .\]

\smallskip

\noindent SAGE script: {\tt tightpath.sage}

\noindent Output file: {\tt ineq\_tightpath}

\subsection{The size of the set $\cE_T \cap \cF_7$ is equal to $247$}
First, we generate all the $456$ non-isomorphic tournaments on $7$-vertices.
Next, for each such a tournament $T$, we construct the $3$-graph $C(T)$, and check whether it is isomorphic to $C(T')$ for some other tournament $T'$.
It turns out that $|\cE_T \cap \cF_7| = 247$.
Observe that for the sole purpose of verifying~\eqref{equality: flag identity}, it is enough to compute the size of $\cE_T \cap \cF_7$.
Indeed, any identity of the form \eqref{equality: flag identity} may have a positive slack on at most $|\cF_7 \setminus \cE_T| = 7910$ coordinates by complementary slackness,
and the identity for our particular choice of $Q_i$, $I_i$, $c_j$ and $u_D$, where $i \in [6]$, $j\in [3]$ and $D \in \cD$, has a positive slack on exactly $7910$ coordinates.

\smallskip

\noindent SAGE script: {\tt ext7.sage}

\subsection{Verifying the identity~\eqref{equality: flag identity} stated in Lemma~\ref{lemma: flag}}
Given the input data for the entries of the matrices $I_i$ and $Q_i$, where $i \in [6]$,
the positive rationals $c_0$, $c_1$, $c_2$ and $u_D$ for $D \in \cD$, we first check the positive definiteness of every $Q_i$.
Next, we compute the right-hand side of~\eqref{equality: flag identity} and compare its $8157$ coefficients one by one with the left-hand side of~\eqref{equality: flag identity}. 
Finally, we verify that there are exactly $7910$ coordinates with a positive slack.

\smallskip

\noindent SAGE script: {\tt lemma28.sage}

\end{document}